\newcommand{\rd}{\,\mathrm{d}}
\numberwithin{equation}{section}
\newtheorem{theorem}{Theorem}[section]
\newtheorem{lemma}[theorem]{Lemma}
\newtheorem{corollary}[theorem]{Corollary}
\newtheorem{proposition}[theorem]{Proposition}
\newtheorem{remark}[theorem]{Remark}
\def\bu{{\bf u}}
\def\bx{{\bf x}}
\def\by{{\bf y}}
\def\cE{\mathcal{E}}
\def\cB{\mathcal{B}}
\def\cM{\mathcal{M}}
\def\cR{\mathcal{R}}
\def\cF{\mathcal{F}}
\def\pv{\textnormal{p.v.\,}}
\def\sgn{\textnormal{sgn}}
\def\supp{\textnormal{supp\,}}
\def\essinf{\textnormal{essinf\,}}
\begin{document}
\title[Global Minimizers for Anisotropic Interaction Energies]{Global Minimizers of a Large class of Anisotropic Attractive-Repulsive Interaction Energies in 2D}

\author{Jos\'e A. Carrillo$^{\dagger}$, Ruiwen Shu$^{\dagger}$}
\date{\today}
\subjclass[2020]{}
\address[$\dagger$]{Mathematical Institute, University of Oxford, Oxford OX2 6GG, UK. Emails: {\tt carrillo@maths.ox.ac.uk, shu@maths.ox.ac.uk}}

\maketitle

\begin{abstract}
We study a large family of Riesz-type singular interaction potentials with anisotropy in two dimensions. Their associated global energy minimizers are given by explicit formulas whose supports are determined by ellipses under certain assumptions. More precisely, by parameterizing the strength of the anisotropic part we characterize the sharp range in which these explicit ellipse-supported configurations are the global minimizers based on linear convexity arguments. Moreover, for certain anisotropic parts, we prove that for large values of the parameter the global minimizer is only given by vertically concentrated measures corresponding to one dimensional minimizers. We also show that these ellipse-supported configurations generically do not collapse to a vertically concentrated measure at the critical value for convexity, leading to an interesting gap of the parameters in between. In this intermediate range, we conclude by infinitesimal concavity that any superlevel set of any local minimizer in a suitable sense does not have interior points. Furthermore, for certain anisotropic parts, their support cannot contain any vertical segment for a restricted range of parameters, and moreover the global minimizers are expected to exhibit a zigzag behavior. All these results hold for the limiting case of the logarithmic repulsive potential, extending and generalizing previous results in the literature. Various examples of anisotropic parts leading to even more complex behavior are numerically explored.
\end{abstract}

\section{Introduction}

In this paper we study the interaction energy functional 
\begin{equation}\label{E}
E[\rho] = \frac{1}{2}\int_{\mathbb{R}^2}\int_{\mathbb{R}^2} W(\bx-\by)\rho(\by)\rd{\by}\rho(\bx)\rd{\bx}
\end{equation}
where $\rho$ is a probability measure on $\mathbb{R}^2$. For notational convenience, we use $\rho(\bx)\rd{\bx}$ to denote the integral with respect to a general Borel measure $\rho$. Here, $W$ is a two-dimensional \emph{anisotropic} attractive-repulsive interaction potential, given by
\begin{equation}\label{W}
W(\bx) = |\bx|^{-s}\Omega(\theta) + |\bx|^2,\quad 0<s<2,
\end{equation}
and
\begin{equation}\label{Wlogintro}
    W_{\log}(\bx) = -\ln |\bx| + \Omega(\theta) + |\bx|^2\,,
\end{equation}
that can be seen as a limit $s\to 0^+$ of potentials similar to \eqref{W}.
Here $\theta\in S^1 = [-\pi,\pi)$ denotes the angle of $\bx$: $\frac{\bx}{|\bx|}=(\cos\theta,\sin\theta)$, and $\Omega$ is a function defined on $S^1$, modeling the anisotropic effect of the interaction. Throughout this paper, we will assume the angle function $\Omega$ satisfies
\begin{equation*}
    \text{{\bf (H)}: $\Omega$ is smooth (i.e., $\Omega\in C^\infty(S^1)$), strictly positive, and $\Omega(\theta+\pi)=\Omega(\theta)$.}
\end{equation*}

Anisotropic interactions of the form \eqref{Wlogintro} appear in the modelling of edge dislocations of the same sign \cite{MPS, GPPS1} in material science. Their associated energy minimizers were conjectured to be vertical walls of dislocations, conjecture settled in \cite{MRS19}, where the authors proved that the global minimizer, modulo translations, is given by the semi-circle law on the vertical axis.

Isotropic attractive-repulsive interaction energies have been extensively studied in the past decade, in terms of the existence \cite{CCP15,SST15}, uniqueness modulo translations \cite{CV1,CV2,lopes2017uniqueness,shu2021newtonian}, support \cite{BCLR1,BCLR2,BKHUB15,CFP,LM21,davies2021classifying,davies2021classifying2}, regularity \cite{CV1,CV2,SV14,CDM16} and symmetry \cite{carrilloshu21} of the global/local minimizers. Notice that throughout this paper, the uniqueness of minimizer is always discussed up to translation. 

Finding the unique global minimizers of the interaction energy for particular potentials is a classical problem in potential theory \cite{F35,ST97}.
More precisely, for the repulsive logarithmic potential with quadratic confinement $W(\bx)=-\ln |\bx|+|\bx|^2$ in 2D, that is \eqref{Wlogintro} with $\Omega=1$, it is known~\cite{F35} that the unique global minimizer is the characteristic function of a suitable Euclidean ball, while for $W(\bx) = |\bx|^{-s} + |\bx|^2$, that is \eqref{W} with $\Omega=1$ and $0<s<2$, it is known \cite{CV1,CV2} that the unique global minimizer of the associated interaction energy $E$ is given by
\begin{equation}\label{rho2}
    \rho_2(\bx)=C_2(R_2^2-|\bx|^2)_+^{s/2}
\end{equation}
for some positive constants $R_2,C_2$ depending on $s$ (see \eqref{R2C2} for the explicit formula for $R_2,C_2$). Here, the uniqueness of minimizer can be easily obtained from the \emph{linear interpolation convexity} (LIC) property of $W$, as explained below. For an interaction potential $W$, we say it has the LIC property, as defined in \cite{carrilloshu21}, if for any two compactly supported probability measures $\rho_0\ne \rho_1$ with the same center of mass, the energy along their linear interpolation curve $E[(1-t)\rho_0+t\rho_1],\,t \in [0,1]$ is always strictly convex. This is equivalent to say that $\frac{\rd^2}{\rd{t}^2}E[(1-t)\rho_0+t\rho_1] = 2E[\rho_0-\rho_1] > 0$. It is straightforward to see that LIC implies the uniqueness of energy minimizer.

For anisotropic attractive-repulsive interaction potentials, the behavior of energy minimizers becomes much more interesting because radial symmetry of minimizers is no longer expected. Intuitively, the shape of the minimizer should elongate along the direction where $\Omega$ has smaller values. To observe the phase transition phenomena of anisotropic energy minimizers, we consider a family of potentials
\begin{equation}\label{Walpha}
W_\alpha(\bx) = |\bx|^{-s}(1 + \alpha \omega(\theta)) + |\bx|^2,\quad 0<s<2,\quad \alpha \ge 0
\end{equation}
where $\omega$ is an angle function and $\alpha$ is a scaling parameter. We  assume that $\omega$ satisfies
\begin{equation*}
    \text{{\bf (h)}: $\omega$ is smooth, nonnegative, not identically zero, $\omega(\theta+\pi)=\omega(\theta)$, and $\omega(\frac{\pi}{2})=0$,}
\end{equation*}
where in the last condition we fix the angle $\frac{\pi}{2}$ as the direction with smallest $\omega$ value, without loss of generality. In this case the associated energy will be denoted as $E_\alpha$ (when $\omega$ is clear from context). Using this family of potentials, one can study how the energy minimizers behave as $\alpha$ changes.

A series of recent works \cite{MRS19,CMMRSV,CMMRSV2,MMRSV20,MMSRV21-1} study a particular family of anisotropic potentials, corresponding to a limiting case of the potential $W_\alpha$ in \eqref{Walpha} as $s\to 0^+$, see Section \ref{sec_s0}, given by\footnote{In the references, the quadratic part of the potential was taken as $|\bx|^2/2$, but the results on the energy minimizers are the same via a rescaling of the spatial variable.} 
\begin{equation}\label{particular}
    W_{\log,\alpha}(\bx) = -\ln|\bx| + \alpha\omega(\theta) + |\bx|^2,\quad \omega(\theta)= \cos^2\theta,\quad\alpha\ge 0 .
\end{equation} 
As already mentioned above, for the isotropic case $\alpha=0$, the unique energy minimizer is a constant multiple of the characteristic function of a ball. For $\alpha>0$, \cite{MRS19,CMMRSV} show the following behavior of energy minimizers:
\begin{itemize}
    \item If $0< \alpha \le \alpha_L := 1$, then $W_{\log,\alpha}$ has the LIC property, and the energy minimizer is unique, being a constant multiple of the characteristic function of an ellipse. This ellipse is elongated along the $x_2$-axis.  When $\alpha=\alpha_L$, the ellipse degenerates into a vertical segment, meaning that the minimizer becomes 
    $
    \rho_{\textnormal{1D}}(\bx) = \rho_1(x_2)\delta(x_1)$ with $ \rho_1(x)=C(R^2-|x|^2)_+^{1/2}
    $
    for some positive constants $C,R$. Here $\rho_1(x)$ is the unique energy minimizer for the \emph{one-dimensional} interaction potential $-\ln|x|+|x|^2$.

    \item If $\alpha>\alpha_L$, then the  energy minimizer is unique and given by $\rho_{\textnormal{1D}}$.
\end{itemize}
In higher dimensions ($d\ge 3$), a similar potential $W_\alpha(\bx) = -\frac{1}{|\bx|^{d-2}} + \alpha\frac{x_1^2}{|\bx|^d} + |\bx|^2$ was studied in \cite{CMMRSV2}. Similar conclusion is obtained when $W_\alpha$ has the LIC property (i.e., $-1< \alpha \le d-2$), but in the case with larger $\alpha$, the behavior of the energy minimizers remains unknown. Finally, the most recent results in the literature \cite{MMSRV21-2} show that the ellipses are still the global minimizers for a class of interaction potentials of the form \eqref{particular} via a perturbative argument. Notice that in \cite{MMSRV21-2} they do not assume that the perturbation $\omega$ is positive. However, since the perturbation is bounded below, one can add a constant to \eqref{particular} making $\omega$ positive without changing the minimization problem.

\begin{figure}[ht!]
    \centering
    \includegraphics[width=0.92\textwidth]{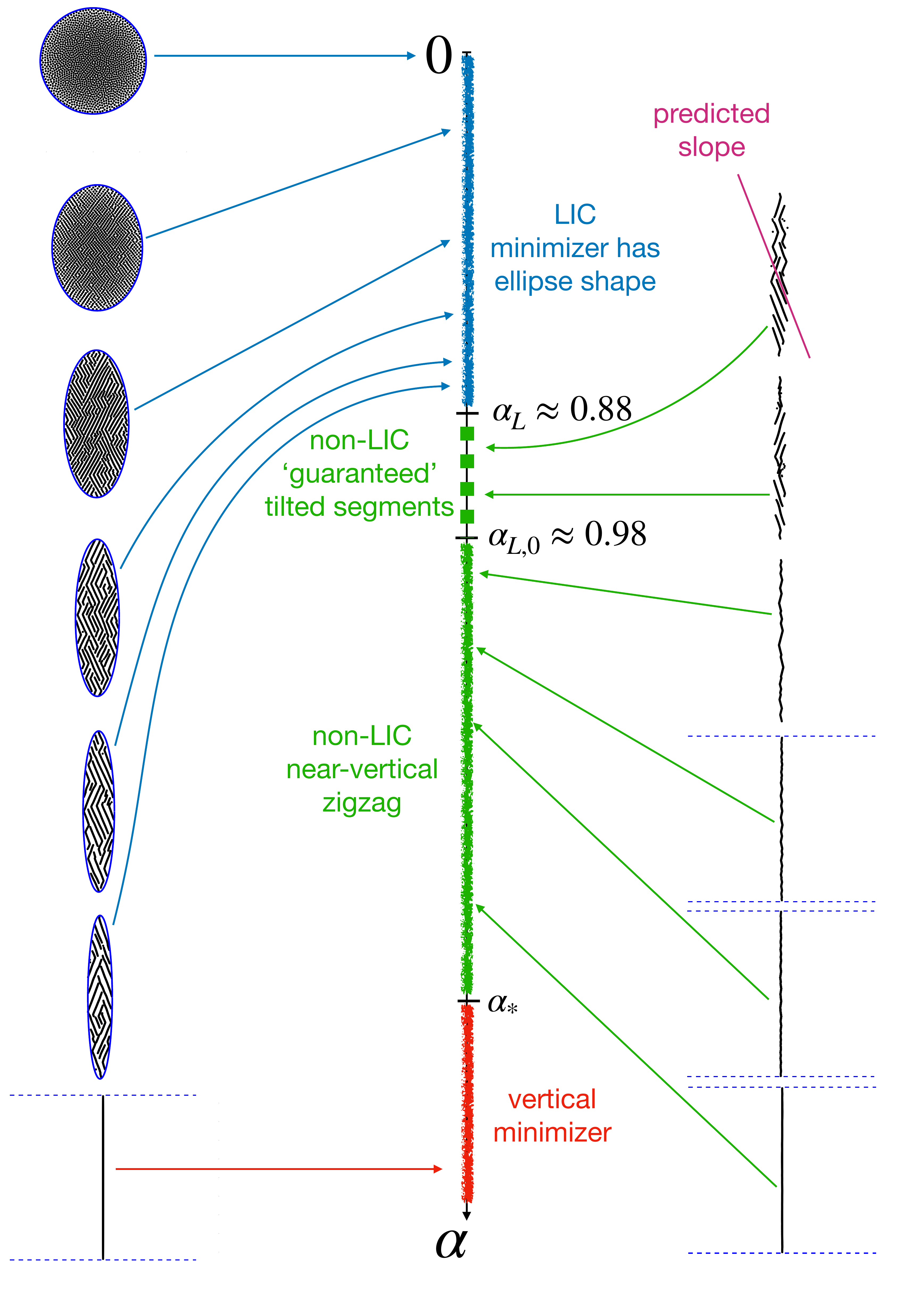}
    \caption{An illustrative example using a particle gradient flow simulation with 1600 particles. The interaction potential is $W_\alpha$ as in \eqref{Walpha} with $s=0.4$ and $\omega(\theta) = \cos^4 \theta + 0.1 \cos^2\theta$ with the $\alpha$ values $0,0.2,0.5,0.7,0.8,0.85,0.9,0.95,1,1.2,1.5,2,4$. The blue ellipses are the predicted shape of the unique minimizer in the LIC cases. The blue dashed lines are the height of $\rho_{\textnormal{1D}}$. The pink line is the predicted slope of the fragmented segments. See Section \ref{sec_example0} for more details.}
    \label{fig:example0}
\end{figure}

In this work we aim to understand the energy minimizers for $W_\alpha$ for the full range of singular repulsive potentials $0< s < 2$ and its limiting case $s=0$. Our main results find large families of anisotropic parts $\omega$ for which we can show the typical behavior illustrated in Figure \ref{fig:example0}, without any smallness assumption as in \cite{MMSRV21-2}. Moreover, we give sharp conditions for some of the critical values in this illustration.

Let us elaborate more our previous statement. In fact, one can see that the typical behavior of the energy minimizers can be much more complicated than the previously considered particular case \eqref{particular}. The existence of compactly supported minimizers for very general anisotropic parts is shown for completeness in Appendix \ref{app_exist}, and thus we may focus on the study of uniqueness and other properties of the minimizers. The main content of the present work will be devoted to the study of the case $0<s<1$. In fact, the limiting case $s=0$ (i.e., logarithmic potentials) exhibits similar phenomena as the $0<s<1$ case, while the case $1\le s<2$ has relatively simple phenomena. They will be studied in Sections \ref{sec_s0} and  \ref{sec_s12} respectively. 

For the $0<s<1$ case, we start by analyzing the LIC property of a general potential $W$ given by \eqref{W}, in Section \ref{sec_LIC}. By calculating its Fourier transform explicitly, we show that the LIC property for $W$ is equivalent to the nonnegativity of $\hat{W}$ away from the origin (Theorem \ref{thm_LICequiv}). For $W_\alpha$ in \eqref{Walpha}, this gives a critical value\footnote{Here $\alpha_L$ depends on $s$ and $\omega$ in \eqref{Walpha}, but we suppress such dependence for notational simplicity. Similar for $\alpha_{L,0}$ and $\alpha_*$ in later context.} $\alpha_L > 0$ (see \eqref{alphaL}), around which the behavior of energy minimizers has a drastic change:
\begin{itemize}
    \item For $0\le \alpha \le \alpha_L$, $W_\alpha$ has the LIC property, and thus there exists a unique energy minimizer.
    \item For $\alpha>\alpha_L$, $W_\alpha$ is not LIC, and we show that $W_\alpha$ is \emph{infinitesimal concave} (Theorem \ref{thm_concave}), a concept introduced in \cite[Section 7]{carrilloshu21}. One expects the energy minimizers to have a lower dimensional support and/or exhibit complicated behavior.
\end{itemize}

We first study the case when $W$ has the LIC property (i.e., $W_\alpha$ with $0\le \alpha \le \alpha_L$), in Section \ref{sec_ell}. To our surprise, although the unique energy minimizer is no longer a characteristic function as in \cite{CMMRSV}, the shape of its support is still an \emph{ellipse}. In fact, the minimizer is given by the push-forward of $\rho_2$ by a linear transformation (Theorem \ref{thm_ell}). The proof is based on a decomposition of the LIC potential $|\bx|^{-s}\Omega(\theta)$ into a positive linear combination of one-dimensional potentials (Corollary \ref{cor_decomp}), which is a consequence of the explicit formula for its Fourier transform. We also notice in Section \ref{sec:gradflow} that the set of such `elliptic' distributions is closed under the flow map of the associated Wasserstein-2 gradient flow (Theorem \ref{thm_ode}). For this class of special solutions of the gradient flow, we show its long time convergence to the minimizer (Theorem \ref{thm_odeconv}). These solutions of the gradient flow are reminiscent of rotating vortex patch solutions in fluid mechanic equations \cite{MO04,HMV13}. 

Then we turn to the study of non-LIC potentials (i.e., $W_\alpha$ with $\alpha>\alpha_L$) in Section \ref{sec:largealpha}. For large $\alpha$, the behavior of the energy minimizers depends on the degeneracy of $\omega(\theta)$ near its minimum point $\theta=\frac{\pi}{2}$:
\begin{itemize}
    \item If $\omega$ is non-degenerate at $\frac{\pi}{2}$ (having a positive second derivative), then 
    \begin{equation}\label{rho1D}
    \rho_{\textnormal{1D}}(\bx) = \rho_1(x_2)\delta(x_1),\quad \rho_1(x)=C_1(R_1^2-|x|^2)_+^{(1+s)/2}
    \end{equation}
    is the unique energy minimizer for sufficiently large $\alpha$ (Theorem \ref{thm_coer}). Here $\rho_1(x)$ is the unique energy minimizer for the one-dimensional interaction potential $|x|^{-s}+|x|^2$, and the positive constants $C_1,R_1$ are given in \eqref{R1C1}. This is similar to the large-$\alpha$ case in \cite{CMMRSV}. Our result is proved by a similar comparison argument as in \cite{CMMRSV}, but requires the design of a special potential (Lemma \ref{lem_coer}) to compare with. Such design is enabled by the tools developed in Section \ref{sec_ell}.
    
    \item If $\omega$ is degenerate at $\frac{\pi}{2}$, then $\rho_{\textnormal{1D}}$ is never a global energy minimizer for any $\alpha$ (Theorem \ref{thm_degen}), not even a Wasserstein-infinity ($d_\infty$)-local minimizer.  However, the width of the support of any global energy minimizer shrinks to zero as $\alpha\rightarrow\infty$ (Theorem \ref{thm_width}). Numerical simulation shows that a typical minimizer in this case is a vertical segment with slight zigzags (see Section \ref{sec_example2}).
\end{itemize}

Finally we study the case when $\alpha$ is slightly greater than $\alpha_L$ in Section \ref{sec_complex}. In fact, if the ellipse-shaped minimizer at $\alpha=\alpha_L$ does not collapse to $\rho_{\textnormal{1D}}$, then neither can happen if $\alpha$ is slightly greater than $\alpha_L$. Due to infinitesimal concavity, it is reasonable to expect the formation of interesting lower-dimensional patterns for the energy minimizers. To understand these structures, we conduct an asymptotic expansion of the generated potential $W*\rho$ around a segment-like piece within $\rho$ (Proposition \ref{prop_expan}), showing that its local stability is determined by the sign of the Fourier transform of $W$ in its perpendicular direction. For some particular examples of $\omega$, this gives a critical value $\alpha_{L,0}>\alpha_L$, such that for any $\alpha\in (\alpha_L,\alpha_{L,0})$, the support of any $d_\infty$-local minimizer cannot contain any `vertical segments'. This guarantees that $\rho_{\textnormal{1D}}$ is not a $d_\infty$-local minimizer, and minimizers are expected to exhibit a zigzag behavior with a collection of tilted slopes.

Finally, Section 7 is devoted to the $s=0$ case, in which we derive the logarithmic potential \eqref{Wlogintro} as a limit of potentials similar to \eqref{W}. This allows us to extend most results for the case $0<s<1$ to the case $s=0$. In Section 8 we study the case $1\le s<2$, in which we show that LIC always holds, and we conclude that the unique energy minimizer is always supported on some ellipse. In Section 9 we illustrate the main results of the paper by numerical simulations of several examples, and explore cases not covered by our theory. Let us finally remark that some preliminary ideas seem to indicate that our techniques are applicable to higher dimensions.

%%%%%%%%%%%%%%%%%%%%%%%%%%%%%%%%%%%%%%%%%%%%%%%%%%

\section{LIC and Fourier transform}\label{sec_LIC}

If $W$ is given by \eqref{W} with $\Omega$ satisfying {\bf (H)}, then it is clear that
\begin{equation*}
\begin{split}
     \text{{\bf (W)}: } &\text{ $W$ is even, locally integrable, lower-semicontinuous,}\\
    & \text{and bounded above and below by positive multiples of $|\bx|^{-s}$ near 0.}
\end{split}\end{equation*}
Then, using the approximation argument in \cite[Lemma 2.5]{carrilloshu21}, one can justify that
\begin{equation}\label{EFT}
    2E[\mu] = \int_{\mathbb{R}^2} \hat{W}_{\textnormal{rep}}(\xi)|\hat{\mu}(\xi)|^2 \rd{\xi}
\end{equation}
for any compactly supported signed measure $\mu$ with $\int_{\mathbb{R}^2}\mu(\bx)\rd{\bx}=\int_{\mathbb{R}^2}\bx\mu(\bx)\rd{\bx}=0$ and $E[|\mu|]<\infty$, where $W_{\textnormal{rep}}(\bx):=|\bx|^{-s}\Omega(\theta)$ is the repulsive part of $W$. Here the quadratic term $|\bx|^2$ makes no contribution to $E[\mu]$ due to the mean-zero properties of $\mu$, as observed in \cite{lopes2017uniqueness}. This motivates our detailed study of the Fourier transform of functions of the form $|\bx|^{-s}\Omega(\theta)$.

\subsection{Fourier transform of the potential}

We first compute the Fourier transform of the potential $W$, ignoring the quadratic part. We will denote the Fourier variable $\xi = |\xi|\vec{e}_\varphi$ (see \eqref{vece} for the definition of $\vec{e}_\varphi$) throughout the paper.

\begin{lemma}\label{lem_FT}
For any smooth $\Omega$ with $\Omega(\theta+\pi)=\Omega(\theta)$, we have
\begin{equation}\label{lem_FT_1}
\cF[|\bx|^{-s}\Omega(\theta)] = |\xi|^{-2+s}\tilde{\Omega}(\varphi),\quad 0<s<2
\end{equation}
with
\begin{equation}\label{lem_FT_3}
\tilde{\Omega}(\varphi) = \tau_{2-s}\int_{-\pi}^{\pi} |\cos(\varphi-\theta)|^{-2+s}\Big(\Omega(\theta)-\Omega(\varphi+\frac{\pi}{2})\Big)\rd{\theta} + c_s\Omega(\varphi+\frac{\pi}{2})\,.
\end{equation}
Here, $\tau_{2-s}$ and $c_s$ are explicitly computable, see Appendix \ref{app:constants}. $\tilde{\Omega}$ is also smooth and even. If $1<s<2$, then the formula can be simplified as
\begin{equation}\label{lem_FT_2}
\tilde{\Omega}(\varphi) = \tau_{2-s}\int_{-\pi}^{\pi} |\cos(\varphi-\theta)|^{-2+s}\Omega(\theta)\rd{\theta},\quad 1<s<2.
\end{equation}

\end{lemma}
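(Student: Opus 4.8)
The plan is to compute the Fourier transform of $|\bx|^{-s}\Omega(\theta)$ by reducing the two-dimensional problem to a family of one-dimensional transforms via slicing, exploiting the homogeneity of the radial factor. First I would write, for a test function, the pairing $\langle \cF[|\bx|^{-s}\Omega(\theta)],\phi\rangle$ and decompose $\Omega$ into its constant average-type part along each direction and the remainder; the point of subtracting $\Omega(\varphi+\tfrac{\pi}{2})$ in \eqref{lem_FT_3} is precisely to render the angular integral absolutely convergent when $0<s<2$, since $|\cos(\varphi-\theta)|^{-2+s}$ fails to be integrable for $1<s<2$ near its zeros without such a cancellation. So the structural idea is: the Fourier transform of a radial homogeneous function $|\bx|^{-s}$ times a directional weight is a homogeneous function $|\xi|^{-2+s}$ times a new directional weight $\tilde\Omega(\varphi)$, and the map $\Omega\mapsto\tilde\Omega$ is given by convolution on $S^1$ against a homogeneous kernel, which upon antipodal symmetrization collapses to the displayed $|\cos(\varphi-\theta)|^{-2+s}$ kernel.

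The key steps, in order, are: (1) Establish the scaling identity $\cF[|\bx|^{-s}\Omega(\theta)](\xi)=|\xi|^{-2+s}\tilde\Omega(\xi/|\xi|)$ by the change of variables $\bx\mapsto\lambda\bx$, which is immediate from homogeneity and reduces everything to computing $\tilde\Omega$ on the unit circle. (2) Compute $\tilde\Omega(\varphi)$ by writing the integral in polar coordinates and performing the radial integral first; the radial integral $\int_0^\infty r^{-s}e^{-i r \, t}\,r\,\mathrm{d}r$ (suitably interpreted/regularized, e.g. as the distributional Fourier transform of $r^{1-s}$ on the half-line, or via analytic continuation from the range where it converges) produces a constant times $|t|^{-2+s}$ with $t=|\xi|\cos(\varphi-\theta)$ being the projection of the unit Fourier direction onto the spatial direction $\theta$; this is where the constants $\tau_{2-s}$ and $c_s$ enter. (3) Justify convergence: for $0<s\le1$ the angular kernel is integrable and one can drop the subtracted constant, recovering \eqref{lem_FT_2} in the overlapping range, but for $1<s<2$ one must keep the subtraction, so the derivation should be done with the regularization built in, treating $|\cos(\varphi-\theta)|^{-2+s}$ as a distribution (Hadamard finite part) that acts on the smooth function $\Omega(\theta)-\Omega(\varphi+\tfrac\pi2)$ vanishing to second order at $\theta=\varphi\pm\tfrac\pi2$. (4) Verify smoothness and evenness of $\tilde\Omega$: evenness follows from $\tilde\Omega(\varphi+\pi)=\tilde\Omega(\varphi)$ together with the fact that $|\cos|^{-2+s}$ and the $\pi$-periodicity of $\Omega$ are compatible; smoothness follows because the integral operator is a convolution on $S^1$ with a kernel whose only singularities are the integrable/finite-part type at $\theta=\varphi\pm\tfrac\pi2$, and differentiating under the integral sign is legitimate after the second-order vanishing is accounted for (or, cleaner, because $\tilde\Omega$ is, up to constants, the composition of a Fourier multiplier of order $0$ — namely $|\xi|^{-2+s}$ divided by the same homogeneity — acting within a fixed smooth class, but the honest route is direct estimation of the kernel convolution). (5) Finally, the simplification \eqref{lem_FT_2} for $1<s<2$: here one observes that $\int_{-\pi}^{\pi}|\cos(\varphi-\theta)|^{-2+s}\,\mathrm{d}\theta$ diverges, yet the combination in \eqref{lem_FT_3} can be reorganized: the divergent pieces from the two terms cancel in the finite-part sense, and $c_s$ is exactly chosen (see Appendix) so that the net effect is the single integral \eqref{lem_FT_2} interpreted as a finite part; I would present this as an identity of distributions applied to smooth $\Omega$.

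The main obstacle I expect is step (3)–(5): making the regularization rigorous and bookkeeping the constants so that the two presentations \eqref{lem_FT_3} and \eqref{lem_FT_2} genuinely agree on $1<s<2$. Concretely, the delicate point is that the naive radial integral $\int_0^\infty r^{1-s}e^{-irt}\,\mathrm{d}r$ is only conditionally (or not at all) convergent for $s$ in parts of $(0,2)$, so one must either: (a) compute it first for $0<s<1$ where everything is a genuine $L^1_{loc}$ function and the Fourier transform is classical, obtaining \eqref{lem_FT_3} with the subtraction automatically appearing (the subtraction is harmless there but makes the formula uniform), and then (b) argue by analytic continuation in the parameter $s$ that both sides of \eqref{lem_FT_1}–\eqref{lem_FT_3}, viewed as tempered distributions depending holomorphically on $s$ in the strip $0<\Re s<2$, agree on the whole strip since they agree on the sub-interval and are analytic. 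The equality of $\tilde\Omega$ and the simplified form for $1<s<2$ is then a separate distributional computation: one checks that $\int |\cos(\varphi-\theta)|^{-2+s}\big(\Omega(\theta)-\Omega(\varphi+\tfrac\pi2)\big)\,\mathrm{d}\theta + (c_s/\tau_{2-s})\Omega(\varphi+\tfrac\pi2)$ equals the finite-part integral $\mathrm{f.p.}\int|\cos(\varphi-\theta)|^{-2+s}\Omega(\theta)\,\mathrm{d}\theta$, which reduces to the scalar identity that $\mathrm{f.p.}\int_{-\pi}^\pi|\cos\psi|^{-2+s}\,\mathrm{d}\psi = c_s/\tau_{2-s}$ — a one-variable Beta-function computation deferred to Appendix \ref{app:constants}. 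Everything else (the scaling identity, evenness, smoothness) is routine once the kernel is correctly identified.
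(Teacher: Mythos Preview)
Your proposal contains a systematic sign/direction error that propagates through steps (3)--(5). The angular kernel $|\cos(\varphi-\theta)|^{-2+s}$ has exponent $-2+s$, which lies in $(-2,-1]$ for $0<s\le 1$ and in $(-1,0)$ for $1<s<2$. Hence the kernel is \emph{integrable} precisely when $1<s<2$ (so \eqref{lem_FT_2} is a genuine Lebesgue integral there, as the lemma asserts) and \emph{not} integrable for $0<s\le 1$, which is why the subtraction in \eqref{lem_FT_3} is needed in that range. You have this reversed in your step (3), and consequently your step (5) asserts that $\int_{-\pi}^\pi|\cos(\varphi-\theta)|^{-2+s}\,\mathrm d\theta$ diverges for $1<s<2$, which is false. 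Relatedly, the difference $\Omega(\theta)-\Omega(\varphi+\tfrac\pi2)$ vanishes only to \emph{first} order at $\theta=\varphi\pm\tfrac\pi2$ (by $\pi$-periodicity of $\Omega$), not second order; first order is exactly what makes the integrand in \eqref{lem_FT_3} locally $O(|\theta-(\varphi+\tfrac\pi2)|^{-1+s})$, hence integrable for $0<s<1$.

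Once the direction is corrected, your analytic-continuation strategy is a legitimate alternative to the paper's argument, but it must run the other way: establish \eqref{lem_FT_2} directly for $1<s<2$ (where the radial integral $\int_0^\infty r^{1-s}\cos(rt)\,\mathrm dr$ converges as an improper integral and the angular convolution is absolutely convergent), and then continue to $0<s\le 1$. The paper instead handles the two ranges by separate explicit computations: for $1<s<2$ it performs the radial integral as a conditionally convergent improper integral and applies dominated convergence in $\theta$; for $0<s\le 1$ it first proves the transform is locally integrable via a dyadic decomposition, then (for $0<s<1$) integrates by parts in $\theta$ under the temporary assumption $\Omega(0)=\Omega(\tfrac\pi2)=0$ to force convergence of the radial integral, and finally recovers the general case by linearity using $\Omega=1$ and $\Omega=\cos^2\theta$. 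Your route would avoid the integration-by-parts trick and the Littlewood--Paley step, at the cost of verifying that both sides depend holomorphically on $s$ as tempered distributions on the strip $0<\Re s<2$.
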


\begin{remark}
Since $|\bx|^{-s}\Omega(\theta)$ is even, similar formulas work for the inverse Fourier transform, i.e., 
\begin{equation}
\Omega(\theta) = \tau_{s}\int_{-\pi}^{\pi} |\cos(\theta-\varphi)|^{-s}\Big(\tilde{\Omega}(\varphi)-\tilde{\Omega}(\theta+\frac{\pi}{2})\Big)\rd{\varphi} + c_{2-s}\tilde{\Omega}(\theta+\frac{\pi}{2}),\quad 0<s<2
\end{equation}
and
\begin{equation}\label{lem_FT_2r}
\Omega(\theta) = \tau_s\int_{-\pi}^{\pi} |\cos(\theta-\varphi)|^{-s}\tilde{\Omega}(\varphi)\rd{\varphi},\quad 0<s<1.
\end{equation}
\end{remark}

\begin{proof}

By scaling arguments, it is easy to show that $\cF[|\bx|^{-s}\Omega(\theta)]$ has to take the form \eqref{lem_FT_1} as long as it is a locally integrable function.

We first assume $1<s<2$. Then $\cF[|\bx|^{-s}\Omega(\theta)] \in L^2+L^\infty$ is a locally integrable function since $|\bx|^{-s}\Omega(\theta) \in L^2+L^1$. Also, its Fourier transform is given by  $$\cF[|\bx|^{-s}\Omega(\theta)](\xi) = \int_{\mathbb{R}^2} |\bx|^{-s}\Omega(\theta) e^{-2\pi i \bx\cdot\xi}\rd{\bx}=\lim_{R\rightarrow\infty}\int_{\cB(0;R)} |\bx|^{-s}\Omega(\theta) e^{-2\pi i \bx\cdot\xi}\rd{\bx} ,$$
where the integral is interpreted as an improper integral in the radial direction and $\cB(0;R)$ denotes the ball of radius $R$ centered at 0. Without loss of generality, we consider $\xi=\vec{e}_0$ to obtain
\begin{equation}\begin{split}
\cF[|\bx|^{-s}\Omega(\theta)](\xi) = & \int_{\mathbb{R}^2} |\bx|^{-s}\Omega(\theta) e^{-2\pi i \bx\cdot\xi}\rd{\bx} = \lim_{R\rightarrow\infty}\int_0^R \int_{-\pi}^{\pi}\Omega(\theta) e^{-2\pi i \bx\cdot\xi} \rd{\theta} \, r^{1-s}\rd{r} \\
= & \lim_{R\rightarrow\infty}\int_0^R \int_{-\pi}^{\pi}\Omega(\theta) e^{-2\pi i r \cos\theta} \rd{\theta} \, r^{1-s}\rd{r} \\
= & \lim_{R\rightarrow\infty}\int_0^R \int_{-\pi}^{\pi} \Omega(\theta)\cos(2\pi \cos\theta\cdot r)\rd{\theta}\,  r^{1-s}\rd{r} \\
= & \lim_{R\rightarrow\infty}\int_{-\pi}^{\pi}\int_0^R\cos(2\pi \cos\theta\cdot r)   r^{1-s}\rd{r}\,\Omega(\theta) \rd{\theta} \\
= & \lim_{R\rightarrow\infty}\int_{-\pi}^{\pi}\int_0^{R|2\pi \cos\theta|}  r^{1-s}\cos r   \rd{r} \,|2\pi \cos\theta|^{-2+s}\Omega(\theta) \rd{\theta}   \\
= & \int_{-\pi}^{\pi}\int_0^\infty  r^{1-s}\cos r   \rd{r} |2\pi \cos\theta|^{-2+s}\Omega(\theta) \rd{\theta}   \\
\end{split}\end{equation}
where we used a change of variable $r\mapsto |2\pi \cos\theta|r$ in the second last equality, and the last equality is justified by the dominated convergence theorem since $$
\left|\int_0^{R|2\pi \cos\theta|}r^{1-s}\cos r   \rd{r} |2\pi \cos\theta|^{-2+s}\Omega(\theta)\right| \le C | \cos\theta|^{-2+s}|\Omega(\theta)| \in L^1.
$$ 
The  integral $\int_0^\infty  r^{1-s}\cos r   \rd{r}$ (interpreted as an improper integral) is equal to $\Gamma(2-s)\sin\frac{\pi(s-1)}{2}$ by \eqref{calc3}, which is positive for $1<s<2$.  Therefore \eqref{lem_FT_2} follows in view of \eqref{calc4}. It is clear that \eqref{lem_FT_2} is equivalent to \eqref{lem_FT_3} for $1<s<2$ due to the definitions of $\tau_{2-s}$ and $c_s$ in \eqref{calc1} and \eqref{calc4}.

For the case $0<s\le 1$, we first show that $\cF[|\bx|^{-s}\Omega(\theta)]$ is a locally integrable function. In fact, we fix a smooth radial cutoff function $\psi$ supported on $\{1\le |\bx|\le 4\}$ such that $\psi(\bx)+\psi(\bx/2)=1$ on $\{2\le |\bx|\le 4\}$ analogous to Littlewood-Paley decomposition. Then we decompose $|\bx|^{-s}\Omega(\theta)$ as
\begin{equation}
    |\bx|^{-s}\Omega(\theta) = |\bx|^{-s}\Omega(\theta)\left(1-\sum_{k=0}^\infty \psi(2^{-k}\bx)\right) + \sum_{k=0}^\infty |\bx|^{-s}\Omega(\theta)\psi(2^{-k}\bx)
\end{equation}
where the summation converges in the sense of tempered distributions.

We first notice $\cF[|\bx|^{-s}\Omega(\theta)(1-\sum_{k=0}^\infty \psi(2^{-k}\bx))]\in L^\infty$ since its argument is compactly supported on $\cB(0;2)$ and thus in $L^1$. The summation can be written as
\begin{equation}
    \sum_{k=0}^\infty |\bx|^{-s}\Omega(\theta)\psi(2^{-k}\bx) = \sum_{k=0}^\infty 2^{-ks}g(2^{-k}\bx),\quad g(\bx):=|\bx|^{-s}\Omega(\theta)\psi(\bx)
\end{equation}
Therefore, we have
\begin{equation}
    \cF\Big[\sum_{k=0}^\infty |\bx|^{-s}\Omega(\theta)\psi(2^{-k}\bx)\Big] = \sum_{k=0}^\infty 2^{k(2-s)}\hat{g}(2^k\xi)
\end{equation}
in the sense of tempered distributions. Notice that $\hat{g}\in L^1$ since $g$ is smooth and compactly supported, and we have $\|\hat{g}(2^k\cdot)\|_{L^1} = 2^{-2k}\|\hat{g}\|_{L^1}$. Therefore the above summation also converges in $L^1$. This shows $\cF\Big[\sum_{k=0}^\infty |\bx|^{-s}\Omega(\theta)\psi(2^{-k}\bx)\Big]\in L^1$, and thus $\cF[|\bx|^{-s}\Omega(\theta)]\in L^1+L^\infty$.

In the case $0<s<1$, to prove the formula \eqref{lem_FT_3} at $\xi=\vec{e}_0$, we first assume $\Omega(0)=\Omega(\pi/2)=0$. Then, interpreting as an improper integral in the radial direction,
\begin{equation}\begin{split}
\cF[|\bx|^{-s}\Omega(\theta)](\xi) =  & \lim_{R\rightarrow\infty}  \int_0^R\int_{-\pi}^{\pi}\cos(2\pi r \cos\theta)\,\Omega(\theta)\rd{\theta}\,r^{1-s}\rd{r}   \\
=  & \lim_{R\rightarrow\infty}  \frac{1}{2\pi}\int_0^R\int_{-\pi}^{\pi}\sin(2\pi r \cos\theta)\left(\frac{\Omega(\theta)}{\sin\theta}\right)'\rd{\theta}\,r^{-s}\rd{r}   \\
\end{split}\end{equation}
by integration by parts in $\theta$ and using the fact that $\frac{\Omega(\theta)}{\sin\theta}$ is smooth. Then we interchange the order of integrals and use a change of variable to get
\begin{equation}\begin{split}
\cF[|\bx|^{-s}\Omega(\theta)](\xi) =  & \lim_{R\rightarrow\infty}  \frac{1}{2\pi}\int_{-\pi}^{\pi}\int_0^R\sin(2\pi r \cos\theta)r^{-s}\rd{r}\,\left(\frac{\Omega(\theta)}{\sin\theta}\right)'\rd{\theta}  \\
=  & \lim_{R\rightarrow\infty}  \frac{1}{2\pi}\int_{-\pi}^{\pi}\int_0^{2\pi R|\cos\theta|}\!\!\!\!\!\!\!r^{-s}\sin r \rd{r}\,|2\pi \cos\theta|^{s-1}\sgn(\cos\theta)\left(\frac{\Omega(\theta)}{\sin\theta}\right)'\rd{\theta} . \\
\end{split}\end{equation}
Since $0<s\le 1$, $\int_0^{2\pi R |\cos\theta|}r^{-s}\sin r \rd{r}$ is uniformly bounded in $R$, and we may use the dominated convergence theorem to take $R\rightarrow\infty$ to obtain
\begin{equation}\label{Fsin}\begin{split}
\cF[|\bx|^{-s}\Omega(\theta)](\xi) = &  \frac{1}{2\pi}\int_0^\infty r^{-s}\sin r \rd{r} \int_{-\pi}^{\pi}|2\pi \cos\theta|^{s-1}\sgn(\cos\theta)\left(\frac{\Omega(\theta)}{\sin\theta}\right)'\rd{\theta} . \\
\end{split}\end{equation}
Then we get
\begin{equation}\begin{split}
\cF[|\bx|^{-s}\Omega(\theta)](\xi) = & (2\pi)^{s-2}(s-1)\int_0^\infty r^{-s}\sin r \rd{r} \int_{-\pi}^{\pi} |\cos\theta|^{s-2} \Omega(\theta)\rd{\theta}
\end{split}\end{equation}
where the last integral in $\theta$ is well defined because $\Omega(\theta) = O(|\theta-\frac{\pi}{2}|)$ near $\pi/2$, and the last equality can be justified by cutting off a small interval $[\pi/2-\epsilon,\pi/2+\epsilon]$ and integrating by parts. Noticing that $\int_0^\infty r^{-s}\sin r \rd{r} = \Gamma(1-s)\cos\frac{\pi s}{2}$ by \eqref{calc3s}, we get the \eqref{lem_FT_3} in the case $\Omega(0)=\Omega(\pi/2)=0$ using the definition \eqref{calc4} for $\tau_s$.

The general case of \eqref{lem_FT_3} follows by noticing that it is true for $\Omega=1$ and $\Omega=\cos^2\theta$ with $\xi=\vec{e}_0$. Here, for the case $\Omega=\cos^2\theta$, one can first show that $\tilde{\Omega}(\varphi) = c_s (1-(2-s)\cos^2\varphi)/s$ by using \eqref{lem_FT_2} reversely, and then verify that the resulting $\tilde{\Omega}(0)$ coincides with \eqref{lem_FT_3} using \eqref{calc1} and \eqref{calc4}.

In the case $s=1$, the same calculation up to \eqref{Fsin} works for any $\Omega$ with $\Omega(0)=0$. Then we get
\begin{equation}
    \cF[|\bx|^{-s}\Omega(\theta)](\xi) = (2\pi)^{-1}\int_0^\infty r^{-1}\sin r \rd{r} \cdot 4 \Omega(\frac{\pi}{2}) = \Omega(\frac{\pi}{2})
\end{equation}
which is \eqref{lem_FT_3} (noticing that $\tau_1=0,\,c_1=1$. The general case of \eqref{lem_FT_3} follows by noticing that it is true for $\Omega=1$.

\end{proof}

In most parts of this paper, we focus on the range
$0<s<1$. We notice that \eqref{lem_FT_2r} gives a decomposition of $\Omega(\theta)$ into a linear combination of functions of the form $|\cos(\theta-\varphi)|^{-s}$ for various values of $\varphi$. Notice that $|\bx|^{-s}|\cos(\theta-\varphi)|^{-s}=|\bx\cdot \vec{e}_\varphi|^{-s}$. This gives a decomposition of  $|\bx|^{-s}\Omega(\theta)$ into a linear combination of 1D Riesz potentials along the directions $\vec{e}_\varphi$.
\begin{corollary}\label{cor_decomp}
Let $0<s<1$ and $\Omega$ satisfy {\bf (H)}. Let $\tilde{\Omega}$ be defined by \eqref{lem_FT_1}. Then
\begin{equation}\label{decomp}\begin{split}
|\bx|^{-s}\Omega(\theta)
=  \tau_s\int_{-\pi}^\pi |\bx\cdot \vec{e}_\varphi|^{-s}\tilde{\Omega}(\varphi)\rd{\varphi}.
\end{split}\end{equation} 
\end{corollary}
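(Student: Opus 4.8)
The plan is to derive Corollary~\ref{cor_decomp} as an essentially immediate consequence of the inverse Fourier transform identity \eqref{lem_FT_2r} already established in the Remark following Lemma~\ref{lem_FT}. First I would recall the elementary pointwise identity $|\bx|^{-s}|\cos(\theta-\varphi)|^{-s} = \big||\bx|\cos(\theta-\varphi)\big|^{-s} = |\bx\cdot\vec{e}_\varphi|^{-s}$, valid for every $\bx\neq 0$ with angle $\theta$, since $\bx\cdot\vec{e}_\varphi = |\bx|\cos(\theta-\varphi)$ by definition of $\vec{e}_\varphi$. Multiplying \eqref{lem_FT_2r} through by $|\bx|^{-s}$ and moving the factor inside the $\varphi$-integral then formally yields exactly \eqref{decomp}.

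The one genuine point requiring care is the justification of interchanging $|\bx|^{-s}$ with the integral in $\varphi$, i.e.\ that
\begin{equation*}
|\bx|^{-s}\,\tau_s\int_{-\pi}^{\pi}|\cos(\theta-\varphi)|^{-s}\tilde\Omega(\varphi)\rd\varphi
= \tau_s\int_{-\pi}^{\pi}|\bx|^{-s}|\cos(\theta-\varphi)|^{-s}\tilde\Omega(\varphi)\rd\varphi,
\end{equation*}
which is legitimate as soon as the $\varphi$-integrand on the right is absolutely integrable for the fixed $\bx$ under consideration. Since $\tilde\Omega$ is smooth on $S^1$ (hence bounded) by Lemma~\ref{lem_FT}, and $\int_{-\pi}^{\pi}|\cos(\theta-\varphi)|^{-s}\rd\varphi<\infty$ because $0<s<1$, this is clear: the singularity $|\cos(\theta-\varphi)|^{-s}$ at $\varphi = \theta\pm\pi/2$ is integrable, and $|\bx|^{-s}$ is merely a finite constant once $\bx\neq 0$ is fixed. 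So the only thing to write is: fix $\bx\neq 0$, apply \eqref{lem_FT_2r} at its angle $\theta$, multiply by the constant $|\bx|^{-s}$, invoke the pointwise identity above, and note that \eqref{decomp} holds for a.e.\ $\bx$ (indeed every $\bx\neq 0$, with both sides being locally integrable functions that therefore agree as distributions as well).

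I do not anticipate a serious obstacle here; the content of the corollary is already packaged inside the inverse-transform formula \eqref{lem_FT_2r}, and what remains is bookkeeping. If anything, the mildly delicate step is making sure that \eqref{lem_FT_2r} itself is being invoked under hypothesis \textbf{(H)} (so that Lemma~\ref{lem_FT} applies and $\tilde\Omega$ is smooth and even) — but that is exactly the standing assumption in the statement of the corollary, so no extra work is needed. One could add a sentence remarking that this representation expresses $W_{\textnormal{rep}}$ as a superposition (with density $\tau_s\tilde\Omega(\varphi)\rd\varphi$, not necessarily a positive measure in general) of the one-dimensional Riesz potentials $|\bx\cdot\vec e_\varphi|^{-s}$ along each direction $\vec e_\varphi$, which is the form in which it will be used in Section~\ref{sec_ell}; in the LIC regime $\tilde\Omega\geq 0$ and the superposition is genuinely positive, which is what makes the decomposition into one-dimensional potentials useful there.
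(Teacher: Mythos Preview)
Your proposal is correct and matches the paper's own derivation: the paper likewise observes that \eqref{lem_FT_2r} expresses $\Omega(\theta)$ as a linear combination of $|\cos(\theta-\varphi)|^{-s}$, notes the identity $|\bx|^{-s}|\cos(\theta-\varphi)|^{-s}=|\bx\cdot\vec e_\varphi|^{-s}$, and states the corollary without further argument. Your added remark on absolute integrability (using $0<s<1$ and boundedness of $\tilde\Omega$) is a harmless bit of extra rigor.
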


As a consequence, interaction potentials $W$ of the form \eqref{W} with LIC property are exactly given by positive linear combinations of such rotated 1D potentials.

\subsection{Behavior of LIC/non-LIC potentials}

It was proved in \cite[Theorem 2.4]{carrilloshu21} that the Euler-Lagrange condition for energy minimizers is sufficient for LIC potentials.
\begin{lemma}\label{lem_EL}
Assume $W$ satisfies {\bf (W)}, and $W$ has the LIC property. Assume there exists a compactly supported global energy minimizer (which has to be unique up to translation). Then it is the only probability measure satisfying
\begin{equation}\label{EL}
    (W*\rho)(\bx) \le \essinf(W*\rho),\quad \rho \text{ a.e.}
\end{equation}
up to translation.
\end{lemma}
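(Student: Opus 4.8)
The plan is to exploit the LIC property, which says precisely that $E[\mu]>0$ for every nonzero compactly supported signed measure $\mu$ with $\int \mu = 0$ and $\int \bx\,\mu(\bx)\rd\bx = 0$ and $E[|\mu|]<\infty$. Suppose $\rho$ is the (unique up to translation) compactly supported global minimizer, whose existence is assumed, and let $\sigma$ be any probability measure satisfying the Euler--Lagrange inequality \eqref{EL}. Modulo translating $\sigma$ we may assume $\rho$ and $\sigma$ have the same center of mass. If $\sigma$ were compactly supported with finite energy the argument would be immediate: set $\mu = \sigma - \rho$, which is mean-zero and first-moment-zero, and compute, using the bilinearity of the energy and the fact that $2E[\mu] = \frac{\rd^2}{\rd t^2}E[(1-t)\rho + t\sigma]$, the identity
\begin{equation*}
E[\sigma] = E[\rho] + \langle W*\rho, \sigma - \rho\rangle + E[\sigma-\rho],
\end{equation*}
where $\langle W*\rho,\nu\rangle := \int (W*\rho)(\bx)\,\nu(\bx)\rd\bx$. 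The cross term $\langle W*\rho,\sigma-\rho\rangle$ is $\ge 0$: by \eqref{EL} we have $W*\rho \ge \essinf(W*\rho)$ pointwise $\sigma$-a.e., while $W*\rho = \essinf(W*\rho)$ on the support of $\rho$ (this is the standard Euler--Lagrange equality for the minimizer $\rho$, part of Lemma \ref{lem_EL}'s hypotheses being used in the other direction — actually it follows from minimality of $\rho$), so integrating against $\sigma$ and $\rho$ respectively and subtracting gives $\langle W*\rho,\sigma\rangle - \langle W*\rho,\rho\rangle \ge \essinf(W*\rho) - \essinf(W*\rho) = 0$. Hence $E[\sigma] \ge E[\rho] + E[\sigma-\rho] \ge E[\rho]$ with equality forcing $E[\sigma-\rho]=0$, and LIC then forces $\sigma = \rho$.

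The main obstacle is therefore purely a matter of regularity/integrability: an arbitrary probability measure $\sigma$ satisfying \eqref{EL} need not a priori be compactly supported, and one must verify $E[|\sigma-\rho|]<\infty$ before the Fourier identity \eqref{EFT} and the LIC inequality can be invoked. To handle this I would first show that \eqref{EL} forces $\sigma$ to be supported where $W*\rho$ attains its essential infimum, and use the growth of the quadratic part $|\bx|^2$ in $W$ (which dominates for large $|\bx|$) together with the local integrability and near-origin $|\bx|^{-s}$ control in {\bf (W)} to conclude $\sigma$ has compact support and finite energy. More precisely, since $\rho$ is compactly supported, $(W*\rho)(\bx) \gtrsim |\bx|^2$ for $|\bx|$ large, so $\{W*\rho \le \essinf(W*\rho)\}$ is bounded; then $\sigma$ is compactly supported, and $E[\sigma]<\infty$ follows from {\bf (W)} since a compactly supported probability measure against a potential bounded near $0$ by $|\bx|^{-s}$ with $0<s<2$ has finite energy in $\mathbb{R}^2$. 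With both $\rho$ and $\sigma$ compactly supported and of finite energy, $\mu = \sigma-\rho$ satisfies the hypotheses needed for \eqref{EFT}, so $E[\mu]\ge 0$, and $E[\mu]>0$ unless $\mu=0$ by the LIC characterization via $\hat W_{\textnormal{rep}} \ge 0$ away from the origin.

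One technical subtlety worth flagging: passing from $\frac{\rd^2}{\rd t^2}E[(1-t)\rho+t\sigma] = 2E[\sigma-\rho]$ to the pointwise expansion of $E[\sigma]$ above requires $t\mapsto E[(1-t)\rho+t\sigma]$ to be a genuine quadratic polynomial in $t$, which is exactly the content of the bilinear expansion and holds once all the relevant energies are finite; this is where the finiteness of $E[|\sigma-\rho|]$ is really used. Given that this is already established in \cite[Theorem 2.4]{carrilloshu21} — indeed the statement of Lemma \ref{lem_EL} cites it — I would simply record that the present situation satisfies the hypotheses there (potential of the form \eqref{W}, property {\bf (W)}, LIC, existence of a compactly supported minimizer) and conclude. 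So the proof reduces to: (i) verify {\bf (W)}; (ii) verify LIC is in force by hypothesis; (iii) invoke \cite[Theorem 2.4]{carrilloshu21}; with the only genuine work being the compact-support argument for $\sigma$ sketched above, which the quadratic confinement makes routine.
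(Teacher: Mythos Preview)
Your sketch has the roles of $\rho$ and $\sigma$ reversed at the crucial step. You expand $E[\sigma]$ about $\rho$ and use the Euler--Lagrange equality for the \emph{minimizer} $\rho$ to obtain $\langle W*\rho,\sigma-\rho\rangle\ge 0$, hence $E[\sigma]\ge E[\rho]+E[\sigma-\rho]$. But this inequality holds for \emph{every} compactly supported probability measure $\sigma$ with the same center of mass---it is just a restatement that $\rho$ is the minimizer---and nowhere uses the hypothesis that $\sigma$ satisfies \eqref{EL}. From $E[\sigma]\ge E[\rho]+E[\sigma-\rho]$ alone you cannot conclude $E[\sigma-\rho]=0$; that would require knowing $E[\sigma]=E[\rho]$, which is precisely what is at stake. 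The correct move is to expand the other way,
\[
E[\rho]=E[\sigma]+\langle W*\sigma,\rho-\sigma\rangle+E[\rho-\sigma],
\]
and use \eqref{EL} for $\sigma$ (namely $W*\sigma=\essinf(W*\sigma)$ $\sigma$-a.e.\ and $W*\sigma\ge\essinf(W*\sigma)$ everywhere) to get $\langle W*\sigma,\rho-\sigma\rangle\ge 0$. Then $E[\rho]\ge E[\sigma]+E[\rho-\sigma]$, and combining with $E[\rho]\le E[\sigma]$ (minimality of $\rho$) gives $E[\rho-\sigma]\le 0$, whence $\sigma=\rho$ by LIC.

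The same reversal contaminates your side arguments. For compact support you claim $\sigma$ lives in $\{W*\rho\le\essinf(W*\rho)\}$, but \eqref{EL} for $\sigma$ constrains $W*\sigma$, not $W*\rho$; one must instead argue that $W*\sigma(\bx)\to\infty$ as $|\bx|\to\infty$, which requires some a priori moment control on $\sigma$. Your claim that any compactly supported probability measure has finite energy against a potential bounded near $0$ by $|\bx|^{-s}$ is also false (a Dirac mass is a counterexample). The paper itself does not prove the lemma but simply cites \cite[Theorem 2.4]{carrilloshu21}, where these technicalities are handled; your final instinct to defer to that reference matches the paper, but the sketch preceding it does not stand on its own.
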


On the contrary, if $\hat{W}_{\textnormal{rep}}$ is negative somewhere, then the structure of the Fourier transform \eqref{lem_FT_1} enforces the `infinitesimal concave' (in the sense of \cite[Section 7]{carrilloshu21}) property of $W$.

\begin{proposition}\label{thm_concave}
Let $W$ is given by \eqref{W} with $0<s<2$ and $\Omega$ satisfying {\bf (H)}. Assume
$\tilde{\Omega}$ is negative somewhere. Then $W$ is infinitesimal concave, i.e., for any $\epsilon>0$, there exists a function $\mu\in L^\infty(\mathbb{R}^2)$ such that $\int \mu = 0$, $\supp\mu \subset \cB(0;\epsilon)$ and $E[\mu]<0$.
\end{proposition}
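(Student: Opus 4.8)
The plan is to exploit the Fourier representation \eqref{EFT} in the opposite direction from its use in the LIC case. We are told that $\tilde\Omega(\varphi_0)<0$ for some $\varphi_0 \in S^1$, and by continuity of $\tilde\Omega$ (Lemma \ref{lem_FT}) there is an open arc $I \ni \varphi_0$ and $\delta>0$ with $\tilde\Omega(\varphi) \le -\delta$ for all $\varphi \in I$. Since $\hat W_{\textnormal{rep}}(\xi) = |\xi|^{-2+s}\tilde\Omega(\varphi)$, the multiplier $\hat W_{\textnormal{rep}}$ is strictly negative on the open cone $\Gamma = \{|\xi|\vec e_\varphi : |\xi|>0,\ \varphi \in I\}$. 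The idea is then to build a mean-zero $\mu \in L^\infty$, supported in $\cB(0;\epsilon)$, whose Fourier transform $\hat\mu$ concentrates its $L^2$ mass on $\Gamma$ (or more precisely on a large ball intersected with $\Gamma$), so that $2E[\mu] = \int \hat W_{\textnormal{rep}}(\xi)|\hat\mu(\xi)|^2\rd\xi < 0$. The mean-zero and mean-position-zero conditions $\int \mu = \int \bx\,\mu = 0$ — needed to invoke \eqref{EFT}, and also automatically forcing $\hat\mu(0)=\nabla\hat\mu(0)=0$ so the singularity of $|\xi|^{-2+s}$ at the origin is harmless — should be arranged by taking $\mu$ to be a suitable derivative or difference of bump functions.

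The concrete construction I would use: fix a Schwartz function $\phi$ whose Fourier transform $\hat\phi$ is supported in a small ball around a fixed direction $\vec e_{\varphi_0}$ inside $\Gamma$ and vanishes near the origin — e.g. take $\hat\phi$ a smooth bump in the annular sector $\{1\le |\xi|\le 2,\ \varphi\in I'\}$ for an arc $I'\Subset I$. Then $\phi$ is Schwartz with $\int \phi = \int\bx\,\phi = 0$, and $2E[\phi]=\int\hat W_{\textnormal{rep}}|\hat\phi|^2 \le -\delta\int_{1\le|\xi|\le2}|\xi|^{-2+s}|\hat\phi|^2 =: -2c_0 < 0$. To bring the support into $\cB(0;\epsilon)$, rescale: for $R>0$ set $\phi_R(\bx) = R^2\phi(R\bx)$, so $\hat\phi_R(\xi)=\hat\phi(\xi/R)$ is supported in $\{R\le|\xi|\le 2R,\ \varphi\in I'\}\subset\Gamma$, and by the $s$-homogeneity of the multiplier $2E[\phi_R] = R^{2-s}\cdot 2E[\phi]/R^{?}$ — a clean scaling computation shows $2E[\phi_R] = R^{2+s}\,\cdot(\text{const})<0$ with the constant negative, in any case $E[\phi_R]<0$ for every $R$. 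Still $\phi_R$ is only Schwartz, not compactly supported. The final step is a cutoff: let $\chi$ be a smooth bump equal to $1$ on $\cB(0;\epsilon/2)$, supported in $\cB(0;\epsilon)$, and set $\psi_R := \chi\,\phi_R$. Because $\phi_R$ has rapidly decaying tails, for $R$ large enough $\psi_R$ is $L^\infty$, compactly supported in $\cB(0;\epsilon)$, close to $\phi_R$ in the relevant norm, and $E[\psi_R]<0$ by continuity of the quadratic form. A small correction must be added to restore exactly $\int\psi_R = \int\bx\,\psi_R=0$ (these are $O(R^{-N})$ small after the cutoff): subtract an explicit fixed bump times these small coefficients; this perturbs $E$ negligibly.

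The main obstacle is the last step: the cutoff $\chi\phi_R$ destroys the exact Fourier localization — $\hat\psi_R = \hat\chi * \hat\phi_R$ now has tails spilling outside the good cone $\Gamma$, where $\hat W_{\textnormal{rep}}$ can be large and positive, and also near the origin where $|\xi|^{-2+s}$ blows up. I would control this by a decay estimate: since $\phi_R$ is Schwartz with all moments up to high order vanishing, $\psi_R$ differs from $\phi_R$ by a function supported in $\cB(0;\epsilon)\setminus\cB(0;\epsilon/2)$ of size $O_N(R^{-N})$ in, say, $L^1\cap L^\infty$; hence $\|\hat\psi_R-\hat\phi_R\|$ is small in the weighted space $L^2(|\xi|^{-2+s}(1+|\xi|)^{2-s}\rd\xi)$ relevant to \eqref{EFT}, and $|2E[\psi_R]-2E[\phi_R]|$ is dominated by this plus cross terms, all $\to 0$ as $R\to\infty$ while $2E[\phi_R]$ stays bounded away from $0$ by a negative constant after appropriate normalization. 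Care is needed that \eqref{EFT} genuinely applies to $\psi_R$ — it does, since $\psi_R\in L^\infty$ is compactly supported, mean-zero, mean-position-zero, and $E[|\psi_R|]<\infty$ as $W$ satisfies {\bf (W)} with an $|\bx|^{-s}$, $s<2$, integrable singularity — so the reduction to estimating the single integral $\int\hat W_{\textnormal{rep}}|\hat\psi_R|^2$ is legitimate. Alternatively, and perhaps more cleanly, one can avoid the cutoff error analysis entirely by choosing $\mu$ directly in physical space as a high-frequency oscillatory packet: $\mu(\bx) = \chi_\epsilon(\bx)\big(\cos(2\pi R\,\bx\cdot\vec e_{\varphi_0}) - a\big)$ with $\chi_\epsilon$ a fixed bump on $\cB(0;\epsilon)$ and $a$ chosen to enforce $\int\mu=0$ (and $\chi_\epsilon$ even so that $\int\bx\,\mu=0$); then $\hat\mu$ concentrates near $\pm R\vec e_{\varphi_0}\in\Gamma$ up to $O(1)$-width error from $\hat\chi_\epsilon$, and a stationary-phase / direct estimate gives $2E[\mu] = \hat W_{\textnormal{rep}}(R\vec e_{\varphi_0})\|\widehat{\chi_\epsilon}\|_{L^2}^2 R^{??}(1+o(1)) < 0$ for $R$ large. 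I would present whichever of these two is shorter, but in either case the crux is the quantitative frequency-localization estimate that keeps the contribution from the "bad" frequency region (outside $\Gamma$ and near $0$) strictly smaller in absolute value than the negative main term.
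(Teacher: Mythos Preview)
Your approach is correct and essentially matches the paper's: construct a Schwartz function whose Fourier transform is compactly supported in the negative cone at frequency scale $R$ (the paper does this by placing two shifted radial bumps in frequency space, yielding the physical-space oscillatory packet $\mu_1(\bx)=R^2\check\phi(R\bx)\cos(2\pi A x_1)$, which is exactly your ``alternative'' route), then truncate to $\cB(0;\epsilon)$ and correct the mean. The one substantive difference is that the paper estimates the cutoff error $|E[\mu]-E[\mu_1]|$ entirely in \emph{physical space}---bounding $|W*\mu_1|$, $|W*\mu|$, and $|\mu-\mu_1|$ pointwise and integrating---rather than via the weighted Fourier $L^2$ estimate you propose; this sidesteps the ``spillover outside $\Gamma$'' issue you flag and is considerably cleaner (also, the correct scaling is $E[\phi_R]=R^s E[\phi]$, not $R^{2+s}$).
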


Using the conclusion of this theorem, the argument of \cite[Theorem 7.1]{carrilloshu21} applies, showing that any superlevet set of any $d_\infty$-local minimizer does not have interior points.
\begin{proof}
Without loss of generality, assume $\tilde{\Omega}(0) < 0$. Then there exists $\varphi_1>0$ such that 
\begin{equation}\label{omeganeg}
\tilde{\Omega}(\varphi) \le -c_1 < 0,\quad \forall |\varphi| \le \varphi_1.
\end{equation}

Let $0<\epsilon<1$ be fixed. Take a nonnegative smooth radial function $\phi$ supported on $\cB(0;1)$ with $\int \phi = 1$. We first take $R=R(\epsilon) \ge \max\{1,1/\epsilon\}$ to be determined, and define $\mu_1$ by
\begin{equation}
\hat{\mu}_1(\xi) = \frac{1}{2}\Big(\phi\Big(\frac{1}{R}(\xi_1-A,\xi_2)\Big)+\phi\Big(\frac{1}{R}(\xi_1+A,\xi_2)\Big)\Big),\quad A = \frac{R}{\varphi_1}+R.
\end{equation}
Here $\mu_1$ is real and mean-zero because $\hat{\mu}_1$ is even and $\hat{\mu}_1(0)=0$. Also,  
\begin{equation}
\supp\hat{\mu}_1 \subset \cB\big((A,0);R\big)\cup \cB\big((-A,0);R\big) \subset \{(r\cos\varphi,r\sin\varphi): r\ge 0,\,|\varphi| \le\varphi_1\}
\end{equation}
since any point in $\cB((A,0);R)$ has $\xi_1\ge A-R = \frac{R}{\varphi_1}$ and $|\xi_2| \le R$ and thus $|\varphi| \le |\tan\varphi| \le \varphi_1$. Also notice that $\sup_{\xi\in\supp\hat{\mu}_1}|\xi| = \frac{R}{\varphi_1}+2R \le C R$. Therefore, \eqref{lem_FT_1} and \eqref{omeganeg} give
\begin{equation}\label{Emu1}
E[\mu_1] = \frac{1}{2}\int_{\mathbb{R}^2} \hat{W}(\xi) |\hat{\mu}_1(\xi)|^2\rd{\xi} \le -c R^{s},
\end{equation}
where $c$ depends on the constants $c_1$ and $\varphi_1$ fixed for the rest of the proof.
Then we compute
\begin{equation}
\mu_1(\bx) = \frac{R^2}{2}\check{\phi}(R\bx)(e^{2\pi i A x_1}+e^{-2\pi i A x_1}) = R^2\check{\phi}(R\bx)\cos(2\pi  A x_1).
\end{equation}
We define a compactly supported mean-zero $L^\infty$ function
\begin{equation}
    \mu(\bx) = \chi_{\cB(0;\epsilon)}(\bx) \Big(\mu_1(\bx) - \frac{1}{|\cB(0;\epsilon)|}\int_{\cB(0;\epsilon)}\mu_1(\by)\rd{\by}\Big).
\end{equation}

To estimate the difference between $E[\mu_1]$ and $E[\mu]$, we start by taking $m> 0$ to be chosen, and noticing that
\begin{equation}
    |\check{\phi}(\bx)| \le C (1+|\bx|)^{-m},
\end{equation}
where $C$ may depend on $m$, since $\phi$ is smooth and compactly supported. Therefore, we deduce
\begin{equation}\label{mu1m}
    |\mu_1(\bx)| \le C R^2(1+R|\bx|)^{-m}.
\end{equation}
Therefore, using the estimate $|W(\bx)| \le C(|\bx|^{-s}+|\bx|^2)$, we obtain
\begin{equation}\label{Wmu1}\begin{split}
    |(W*\mu_1)(\bx)| \le & \int_{|\bx-\by|<1+|\bx|}|\mu_1(\bx-\by)W(\by)|\rd{\by} + \int_{|\by|\ge 1+|\bx|}|\mu_1(\by)W(\bx-\by)|\rd{\by} \\
    \le & C R^2 \int_{|\bx-\by|<1+|\bx|}\!\!\!\!\!\!\!\!\!\!\!\!\!\!\!\!|W(\by)|\rd{\by}  + C R^2\int_{|\by|\ge 1+|\bx|}\!\!\!\!\!\!\!\!\!\!\!\!(1+R|\by|)^{-m}(|\bx-\by|^{-s}+|\bx-\by|^2)\rd{\by}\\
    \le & C R^2(1+|\bx|)^4  + C R^2\int_{|\by|\ge 1+|\bx|}(1+R|\by|)^{-m}|\bx-\by|^2\rd{\by}\\
    \le & C R^2(1+|\bx|)^4  + C R^2\int_{\mathbb{R}^2}(1+|\bx-\by|)^{-m}|\bx-\by|^2\rd{\by}\\
    \le & C R^2(1+|\bx|)^4 \\
\end{split}\end{equation}
where the third inequality uses $|\bx-\by|\ge 1$ for the second integral, the second last inequality uses $R\ge 1$ and $|\by|\ge |\bx-\by|/2$ for any $|\by|>|\bx|$, and the last inequality holds as long as we take $m>4$. Since $\mu_1$ is mean-zero on $\mathbb{R}^2$, we have
\begin{equation}\begin{split}
    \left|\int_{\cB(0;\epsilon)}\mu_1\rd{\bx}\right|= & \left|\int_{\cB(0;\epsilon)^c}\mu_1\rd{\bx}\right| \le CR^2 \int_{\cB(0;\epsilon)^c} (1+R|\bx|)^{-m} \rd{\bx} \\
    = & C\int_{\cB(0;\epsilon R)^c} (1+|\bx|)^{-m} \rd{\bx} \le C(\epsilon R)^{-m+2}.
\end{split}
\end{equation}
Therefore, by the definition of $\mu$, we obtain
\begin{equation}\label{mumu1}\begin{split}
    |\mu(\bx)-\mu_1(\bx)| 
    \le & |\mu_1(\bx)|\chi_{\cB(0;\epsilon)^c}(\bx) + C\epsilon^{-2}(\epsilon R)^{-m+2}\chi_{\cB(0;\epsilon)}(\bx)
\end{split}\end{equation}
Notice that for any $0<\epsilon<1$,
\begin{equation}
    (W*\chi_{\cB(0;\epsilon)})(\bx) \le C(1+|\bx|^2).
\end{equation}
As a consequence, we deduce
\begin{equation}\begin{split}
    |\big(W*( C\epsilon^{-2}(\epsilon R)^{-m+2}\chi_{\cB(0;\epsilon)})\big)(\bx)| \le C(1+|\bx|^2)\epsilon^{-2}(\epsilon R)^{-m+2} \le C\epsilon^{-2}(1+|\bx|^2).
\end{split}\end{equation}
whenever $m\ge 2$, since we assumed $R\ge 1/\epsilon$. Therefore, combined with \eqref{Wmu1} (which is also true if $\mu_1$ is replaced by $|\mu_1|\chi_{\cB(0;\epsilon)^c}$), we see that
\begin{equation}
    |(W*\mu)(\bx)| \le C R^2\epsilon^{-2} (1+|\bx|)^4
\end{equation}
using $R\ge 1$. Finally, combined with \eqref{Wmu1} and \eqref{mumu1} and using $R\ge 1,\,\epsilon<1$,
\begin{align*}
    |E[\mu]-E[\mu_1]| \le & \frac{1}{2}\int_{\mathbb{R}^2} |(W*\mu)(\bx)(\mu(\bx)-\mu_1(\bx))|\rd{\bx} + \frac{1}{2}\int_{\mathbb{R}^2} |(W*\mu_1)(\bx)(\mu(\bx)-\mu_1(\bx))|\rd{\bx} \\
    \le & C R^2\epsilon^{-2}\int_{\mathbb{R}^2}(1+|\bx|)^4|\mu(\bx)-\mu_1(\bx)|\rd{\bx} \\
    \le & C R^4\epsilon^{-2}\int_{\cB(0;\epsilon)^c}\!\!\!\!(1+|\bx|)^4(1+R|\bx|)^{-m}\rd{\bx} + C R^2\epsilon^{-4}(\epsilon R)^{-m+2}\int_{\cB(0;\epsilon)}\!\!\!\!(1+|\bx|)^4\rd{\bx} \\
    \le & C R^4\epsilon^{-2}\int_{\cB(0;\epsilon)^c}(1+R|\bx|)^{-m+4}\rd{\bx} + C R^2\epsilon^{-4}(\epsilon R)^{-m+2}\epsilon^2 \\
    \le & C R^2\epsilon^{-2}(\epsilon R)^{-m+6} + C R^2\epsilon^{-4}(\epsilon R)^{-m+2}\epsilon^2 \\
    = & C (\epsilon^{-m+4}R^{-m+8} + \epsilon^{-m}R^{-m+4})
\end{align*}
whenever $m>6$. This implies
\begin{equation}
    \frac{|E[\mu]-E[\mu_1]|}{R^s} \le C (\epsilon^{-m+4}R^{-m+8-s} + \epsilon^{-m}R^{-m+4-s})
\end{equation}
We take $m=9$ to guarantee that the above powers of $R$ are negative. Then taking $R$ sufficiently large so that the above RHS is smaller than $c/2$ where $c$ is as in \eqref{Emu1}, we see that $E[\mu]<0$.
\end{proof}

Finally, we establish the following equivalence between the LIC property and the nonnegativity of $\hat{W}_{\textnormal{rep}}$.
\begin{theorem}
\label{thm_LICequiv}
Let $W$ is given by \eqref{W} with $\Omega$ satisfying {\bf (H)}. Then $W$ has the LIC property if and only if $\tilde{\Omega}$ given as in \eqref{lem_FT_1} is nonnegative.
\end{theorem}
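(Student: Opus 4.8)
The plan is to prove the two implications separately, with the Fourier identity \eqref{EFT} and the explicit form $\hat{W}_{\textnormal{rep}}(\xi)=|\xi|^{-2+s}\tilde{\Omega}(\varphi)$ from \eqref{lem_FT_1} as the common tool. It is convenient to first record the routine reduction: $W$ has the LIC property if and only if $E[\mu]>0$ for every nonzero compactly supported signed measure $\mu$ with $\int_{\mathbb{R}^2}\mu(\bx)\rd\bx=\int_{\mathbb{R}^2}\bx\,\mu(\bx)\rd\bx=0$ and $E[|\mu|]<\infty$ --- given such a $\mu$ one normalizes $\mu=\mu_+-\mu_-$ (both of mass $m>0$ since $\int\mu=0$) to probability measures $\rho_0=\mu_+/m\neq\rho_1=\mu_-/m$ with the same center of mass, and conversely every admissible pair $\rho_0\neq\rho_1$ yields such a $\mu=\rho_0-\rho_1$ (the bound $E[|\mu|]<\infty$ holding because $W\geq0$ and $|\mu|\leq\rho_0+\rho_1$).

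\emph{$\tilde{\Omega}\geq0\Rightarrow$ LIC.} For $\mu$ as above, \eqref{EFT} and \eqref{lem_FT_1} give $2E[\mu]=\int_{\mathbb{R}^2}|\xi|^{-2+s}\tilde{\Omega}(\varphi)\,|\hat{\mu}(\xi)|^2\rd\xi\geq0$. To upgrade this to strict positivity for $\mu\neq0$, suppose $E[\mu]=0$; then $\tilde{\Omega}(\varphi)|\hat{\mu}(\xi)|^2=0$ for a.e.\ $\xi$. Here $\tilde{\Omega}\not\equiv0$, for otherwise $\cF[|\bx|^{-s}\Omega(\theta)]$ would be the zero distribution, contradicting $\Omega>0$; being continuous and $\geq0$, $\tilde{\Omega}$ is strictly positive on a nonempty open arc of $S^1$, so $\hat{\mu}$ vanishes a.e.\ --- hence, being continuous, identically --- on a nonempty open cone $\mathcal{C}\subset\mathbb{R}^2$. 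Since $\mu$ is a compactly supported finite measure, $\hat{\mu}$ extends (Paley--Wiener) to an entire function on $\mathbb{C}^2$, in particular is real-analytic on $\mathbb{R}^2$, and a real-analytic function vanishing on the nonempty open set $\mathcal{C}$ must vanish on all of the connected set $\mathbb{R}^2$; thus $\mu=0$, a contradiction. Hence $E[\mu]>0$.

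\emph{$\tilde{\Omega}$ negative somewhere $\Rightarrow$ not LIC.} This is where Proposition \ref{thm_concave} enters: it produces a compactly supported $\mu\in L^\infty(\mathbb{R}^2)$ with $\int\mu=0$ and $E[\mu]<0$ (in particular $\mu\neq0$ and $E[|\mu|]<\infty$). The only additional observation needed is that this $\mu$ is \emph{even} in $\bx$, hence also satisfies $\int\bx\,\mu=0$: in the construction there $\hat{\mu}_1$ is real and even (a symmetric pair of radial bumps), so $\mu_1(\bx)=R^2\check{\phi}(R\bx)\cos(2\pi A x_1)$ is even, and $\mu$ is obtained from $\mu_1$ by multiplication by $\chi_{\cB(0;\epsilon)}$ and subtraction of a constant, both of which preserve evenness. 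By the reduction above, $W$ is then not LIC.

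The substantive step is the strictness argument in the second paragraph: nonnegativity of $\tilde{\Omega}$ is not by itself enough, because $\tilde{\Omega}$ --- only smooth, not assumed real-analytic --- can vanish on an entire arc of directions, so the integrand can vanish without $\hat{\mu}$ doing so pointwise; the rigidity is supplied by the real-analyticity of $\hat{\mu}$. Everything else (the two-formulations reduction, the moment bookkeeping, and $E[|\mu|]<\infty$) is routine and rests only on $W\geq0$ together with the symmetry already present in the construction behind Proposition \ref{thm_concave}.
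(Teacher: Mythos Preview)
Your proof is correct and follows essentially the same approach as the paper's: both directions use the Fourier identity \eqref{EFT}, the forward direction hinges on the analyticity of $\hat{\mu}$ for compactly supported $\mu$ (you phrase it via an open cone where $\tilde{\Omega}>0$, the paper via a ball where $\hat{W}_{\textnormal{rep}}\geq\epsilon$, but the key rigidity is identical), and the converse invokes Proposition~\ref{thm_concave}. Your explicit observation that the $\mu$ constructed there is even---hence has vanishing first moment---is a useful detail the paper leaves implicit when it asserts ``in particular, $W$ does not have LIC.''
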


Notice that the LIC property is a notion of strict convexity, while the nonnegativity of $\tilde{\Omega}$ is a non-strict inequality. The equivalence of them comes from an argument involving analytic functions, as in the proof below.

\begin{proof}
Suppose $\tilde{\Omega}$ is nonnegative, i.e., $\hat{W}_{\textnormal{rep}}$ is nonnegative. Clearly $\hat{W}_{\textnormal{rep}}$ is not identically zero. Since $\hat{W}_{\textnormal{rep}}$ is continuous away from 0, there exists some $\xi_0\ne 0$ and $0<\epsilon<|\xi_0|$ such that $\hat{W}_{\textnormal{rep}}\ge \epsilon$ on $\cB(\xi_0;\epsilon)$.

We claim that for any compactly supported signed measure $\mu\ne 0$, $\hat{\mu}$ cannot be identically zero in $\cB(\xi_0;\epsilon)$. In fact, $\hat{\mu}(\xi) = \int_{\mathbb{R}^2} e^{-2\pi i \bx\cdot\xi} \mu(\bx)\rd{\bx}$ is an analytic function in $\xi\in \mathbb{C}^2$, and having $\hat{\mu}$ identically zero in $\cB(\xi_0;\epsilon)$ would imply that $\hat{\mu}(\xi)=0$ for any $\xi\in \mathbb{C}^2$, contradicting $\mu\ne 0$.

Therefore $2E[\mu] = \int_{\mathbb{R}^2} \hat{W}_{\textnormal{rep}}(\xi)|\hat{\mu}(\xi)|^2 \rd{\xi} \ge \epsilon\int_{\cB(\xi_0;\epsilon)} |\hat{\mu}(\xi)|^2 \rd{\xi} > 0$ for any compactly supported signed measure $\mu\ne 0$ with $\int_{\mathbb{R}^2}\mu(\bx)\rd{\bx}=\int_{\mathbb{R}^2}\bx\mu(\bx)\rd{\bx}=0$, since $|\hat{\mu}|^2$ is continuous and not identically zero on $\cB(\xi_0;\epsilon)$. This gives the LIC property of $W$.

Conversely, if $\tilde{\Omega}$ is negative somewhere, then Proposition \ref{thm_concave} shows that $W$ is infinitesimal concave, and in particular, $W$ does not have LIC. 
\end{proof}

For $\Omega(\theta) = 1 + \alpha \omega(\theta)$ as given in \eqref{Walpha} with $\omega$ satisfying {\bf (h)}, we have $\tilde{\Omega}(\varphi) = c_s(1 + \alpha \tilde{\omega}(\varphi))$ for some smooth function $\tilde{\omega}$, given by \eqref{lem_FT_1} applied to $\omega$ up to a constant multiple by $c_s$. If $0<s<1$, then $\tilde{\omega}$ is necessarily sign-changing because otherwise \eqref{lem_FT_2r} would imply that $\omega$ is strictly positive, contradicting the assumption $\omega(\frac{\pi}{2})=0$. Therefore, there exists a critical value 
\begin{equation}\label{alphaL}
    \alpha_L := -\frac{1}{\min_\varphi \tilde{\omega}(\varphi)} > 0
\end{equation}
depending on $s$ and $\omega$, around which $W_\alpha$ changes from LIC to non-LIC. This naturally leads to a drastic change in the behavior of energy minimizers as stated in the introduction.

%%%%%%%%%%%%%%%%%%%%%%%%%%%%

\section{Minimizers of LIC potentials}\label{sec_ell}

In this section we study the unique global energy minimizer for potentials $W$ given in \eqref{W} with the LIC property. Denote $\cR_\theta = \begin{pmatrix}
\cos\theta & -\sin\theta \\ \sin\theta & \cos\theta
\end{pmatrix}$ as the rotation matrix, and 
\begin{equation}\label{rhoab}
    \rho_{a,b}(\bx) = \frac{1}{ab}\rho_2\Big(\frac{x_1}{a},\frac{x_2}{b}\Big),\quad \rho_{a,b,\eta}(\bx) = \rho_{a,b}(\cR_{-\eta} \bx)
\end{equation}
for $a,b>0,\,\eta\in\mathbb{R}$, where $\rho_2$ is defined in \eqref{rho2}. We also denote $\rho_{0,b}$ as the weak limit of $\rho_{a,b}$ as $a\rightarrow 0^+$ (similar for $\rho_{0,b,\eta}$ and $\rho_{a,0,\eta}$). Notice that $\rho_{a,b}$ is the push-forward of $\rho_2$ by the linear transformation $\begin{pmatrix}
a & 0 \\ 0 & b
\end{pmatrix}$, and $\rho_{a,b,\eta}$ is the push-forward of $\rho_{a,b}$ by $\cR_\eta$. $\supp\rho_{a,b}$ is an ellipse (possibly degenerate) with axes parallel to the coordinate axes and axis lengths $a R_2$ and $b R_2$, where $R_2$ is as in \eqref{rho2}. $\supp\rho_{a,b,\eta}$ is the previously described ellipse rotated by the angle $\eta$ counterclockwise.
\begin{theorem}\label{thm_ell}
Let $W$ be given by \eqref{W} with $0<s<1$ and $\Omega$ satisfying {\bf (H)}. Let $\tilde{\Omega}$ be given by \eqref{lem_FT_1} with $\tilde{\Omega}\ge 0$. Then exactly one of the following holds (up to translation):
\begin{itemize}
\item There exists a unique tuple $(a,b,\eta)\in (0,\infty)^2\times[0,\pi/2)$ such that $\rho_{a,b,\eta}$
is the unique minimizer of $E$.
\item There exists a unique pair $(b,\eta)\in(0,\infty)\times[0,\pi)$ such that $\rho_{0,b,\eta}$ is the unique minimizer of $E$.
\end{itemize}
If $\tilde{\Omega}\ge c >0$, then item 1 must happen.

\end{theorem}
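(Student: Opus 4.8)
The plan is to reduce the two-dimensional minimization problem to a one-dimensional one via the decomposition in Corollary \ref{cor_decomp}, and then solve the resulting family of one-dimensional problems explicitly. The first step is to use \eqref{decomp} to write $W_{\textnormal{rep}}(\bx)=|\bx|^{-s}\Omega(\theta)=\tau_s\int_{-\pi}^\pi |\bx\cdot\vec{e}_\varphi|^{-s}\tilde{\Omega}(\varphi)\rd\varphi$ with $\tilde{\Omega}\ge 0$; this expresses $W$, modulo the quadratic part, as a nonnegative superposition of rotated one-dimensional Riesz potentials. Combining this with the observation from \eqref{EFT}--\eqref{EL} that for measures with mean zero and zero center of mass the quadratic term drops out, we should be able to reduce the Euler--Lagrange verification \eqref{EL} of Lemma \ref{lem_EL} to a statement about one-dimensional potentials. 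Concretely, I would compute $W*\rho_{a,b,\eta}$ by interchanging the $\varphi$-integral with the convolution, using that the projection of $\rho_{a,b}$ onto any line is (a scaled copy of) the one-dimensional minimizer profile $\rho_1$ for $|x|^{-s}+|x|^2$. The key elementary fact is that $\rho_2(\bx)=C_2(R_2^2-|\bx|^2)_+^{s/2}$ has the property that all of its one-dimensional marginals are of the form $c(r^2-t^2)_+^{(1+s)/2}$, which is exactly the one-dimensional minimizer $\rho_1$; this is what makes the ellipse ansatz close under the decomposition.

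Next I would parametrize: for a linear map $M=\cR_\eta\,\mathrm{diag}(a,b)$ with $\det M = ab$, the push-forward $\rho_{a,b,\eta}=M_\#\rho_2$ generates a potential $W*\rho_{a,b,\eta}$ whose value on $\supp\rho_{a,b,\eta}$ must be constant (and minimal) for the Euler--Lagrange condition to hold. After carrying out the $\varphi$-integration, the "generated potential restricted to the support" becomes a quadratic function of $\bx$ of the form $\bx^T Q(a,b,\eta)\bx + \text{const}$ on the ellipse, where $Q$ depends on $\tilde{\Omega}$ through certain trigonometric moments. The Euler--Lagrange condition then amounts to requiring $Q(a,b,\eta)$ to be a scalar multiple of $(MM^T)^{-1}$, i.e. to matching the "shape" of the generated confining quadratic to the shape of the ellipse. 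This yields a system of equations for $(a,b,\eta)$; I would show it has a solution by a continuity/degree or variational argument — e.g. the minimizer over the finite-dimensional family of ellipses (which exists by compactness, using the growth of $E$ and the known existence of a compactly supported minimizer from Appendix \ref{app_exist}) must satisfy these stationarity equations, and then invoke Lemma \ref{lem_EL}: once we have one ellipse-supported measure satisfying \eqref{EL}, LIC forces it to be \emph{the} global minimizer. Uniqueness of the tuple $(a,b,\eta)$ (with the stated normalization $\eta\in[0,\pi/2)$ in the nondegenerate case, accounting for the symmetry $a\leftrightarrow b$ under $\eta\mapsto\eta+\pi/2$, and $\Omega(\theta+\pi)=\Omega(\theta)$) then follows from uniqueness of the global minimizer.

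For the dichotomy, the degenerate case $a=0$ arises exactly when the minimization over the ellipse family is attained only in the limit $a\to 0^+$, giving $\rho_{0,b,\eta}$, which is the push-forward of a one-dimensional minimizer onto a line; here the relevant one-dimensional potential is $|x|^{-s}\Omega(\eta+\tfrac\pi2)\big.$-type restricted to that line, and one checks \eqref{EL} is still satisfied (the generated potential grows away from the segment). The final assertion is that $\tilde{\Omega}\ge c>0$ rules out degeneration: if $\rho_{0,b,\eta}$ were the minimizer, its generated potential on a neighborhood transverse to the segment would, via \eqref{decomp} with $\tilde{\Omega}$ bounded below, pick up a genuinely concave contribution in the transverse direction from the Riesz pieces $|\bx\cdot\vec{e}_\varphi|^{-s}$ with $\varphi$ near the segment's direction — making it energetically favorable to spread the measure into a thin ellipse, contradicting minimality; quantitatively one bounds the transverse second derivative of $W*\rho_{0,b,\eta}$ from below using $\tilde{\Omega}\ge c$ and shows it is negative, violating \eqref{EL}. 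Equivalently and more cleanly: a uniform lower bound $\tilde{\Omega}\ge c$ means $W$ dominates $c$ times an isotropic LIC potential plus an anisotropic LIC remainder, and the isotropic part alone forces a genuinely two-dimensional (ball-like, hence after the linear change of variables ellipse-like) support, precluding collapse.

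The main obstacle I anticipate is the explicit computation of $W*\rho_{a,b,\eta}$ on its support — establishing that it is exactly a quadratic polynomial there with the right coefficients. This rests on the one-dimensional identity that the Riesz potential $|x|^{-s}$ convolved with $(R^2-x^2)_+^{(1+s)/2}$ equals a quadratic on $[-R,R]$ (the classical computation underlying \eqref{rho2}), combined with a Fubini argument to handle the two-dimensional convolution slice by slice along the directions $\vec{e}_\varphi$. Keeping track of the constants $C_1,C_2,R_1,R_2,\tau_s$ and the trigonometric moments of $\tilde{\Omega}$ through this reduction, and verifying that the resulting shape-matching system is solvable with the claimed normalization, is the technical heart; the rest is soft (compactness for existence within the family, Lemma \ref{lem_EL} for the upgrade to global minimality, and analyticity-free uniqueness inherited from LIC).
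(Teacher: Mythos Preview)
Your overall strategy coincides with the paper's: use Corollary~\ref{cor_decomp} to write $W_{\textnormal{rep}}$ as a nonnegative superposition of rotated 1D Riesz potentials, observe (Lemma~\ref{lem_lin}) that every 1D marginal of $\rho_{a,b,\eta}$ is a rescaled $\rho_1$, deduce that the generated potential is exactly quadratic on the ellipse (Lemma~\ref{lem_ab}), solve for $(a,b,\eta)$, and invoke Lemma~\ref{lem_EL}. One correction: the Euler--Lagrange condition is not ``$Q$ proportional to $(MM^T)^{-1}$'' but simply that the repulsive quadratic equals $-|\bx|^2$, i.e.\ in the paper's notation $A_\eta=B_\eta=1$, $D_\eta=0$.

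Where you diverge from the paper is in how you \emph{find} $(a,b,\eta)$ and how you handle the boundary case. The paper (Lemma~\ref{lem_abf}) argues directly on the explicit integrals \eqref{lem_ab_2}: for fixed $\eta$ an intermediate-value argument on $f(b)=A(1,b)/B(1,b)$ gives a solution to $A=B$, negative-definiteness of the Jacobian $\partial(A,B)/\partial(a,b)$ gives uniqueness, and $g(\eta)=D_\eta$ satisfies $g(\pi/2)=-g(0)$ so IVT produces $\eta$. Your variational route (minimize $E$ over the ellipse family) is a legitimate alternative, and Remark~\ref{rem_cE} confirms the stationarity equations coincide; but you then have to rule out that the infimum is attained only at $a=0$, which is exactly where your sketch becomes thin.

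The genuine gap is the degenerate case. Saying ``one checks \eqref{EL} is still satisfied (the generated potential grows away from the segment)'' skips the key point: when $a=0$ the integral defining $A$ in \eqref{lem_ab_2} may diverge, and the issue is precisely whether the transverse growth of $W*\rho_{0,b,\eta}$ is at least as strong as $x_1^2$. The paper handles this by approximating $\tilde\Omega$ by strictly positive $\tilde\Omega_n$, obtaining nondegenerate $(a_n,b_n,\eta_n)$, extracting a subsequential limit, and then using Fatou's lemma to show $0\le A_\eta(0,b)\le 1$ (see \eqref{ABD_bound}); the inequality $A\le 1$ is what lets you add $(1-A)x_1^2\ge 0$ and still satisfy \eqref{EL}. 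Your proposal does not supply this step.

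For the final assertion, your ``transverse second derivative'' heuristic is the right instinct but the wrong order: by Proposition~\ref{prop_expan} the leading transverse behavior of $W*\rho_{0,b,\eta}$ is $\frac{1}{2\tau_{2-s}}\tilde\Omega(\eta)\,\psi(0)\,\epsilon^{1-s}$, not $\epsilon^2$, and with $\tau_{2-s}<0$ and $\tilde\Omega\ge c>0$ this is negative, violating \eqref{EL}. So your argument can be made rigorous via Proposition~\ref{prop_expan}, which is a genuinely different route from the paper's (the paper instead reads off directly from \eqref{fb} that $\tilde\Omega\ge c>0$ forces $f(b)\to 0$ as $b\to 0^+$, so a nondegenerate solution exists). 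Either works; just be aware that the ``isotropic part forces 2D support'' phrasing is not a proof, since the minimizer is for the full potential, not a summand.
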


\begin{remark}
From the proof, it is clear that the same conclusion holds if $|\bx|^2$ is replaced by any positive definite quadratic potential $\alpha_1x_1^2+\alpha_2x_2^2+2\alpha_3x_1x_2$.
\end{remark}
\begin{remark}
If one further requires the symmetry condition $\Omega(\theta)=\Omega(-\theta)$ (i.e., symmetry of the potential about the $x_1$-axis), then either item 1 happens with $\eta=0$, or item 2 happens with $\eta=0$ or $\eta=\pi/2$. This follows from the fact that $D(a,b)$ in \eqref{lem_ab_2} below is always zero under this symmetry condition.
\end{remark}

We first prove a lemma on the linear projection of $\rho_{a,b,\eta}$ onto 1D subspaces. 
\begin{lemma}\label{lem_lin}
Let $T$ be a linear transformation on $\mathbb{R}^2$ whose image is 1D, spanned by $\vec{e}_\varphi$. Then
\begin{equation}\label{lem_lin_1}
    (T_\#\rho_{a,b,\eta}) (y_1\vec{e}_\varphi+y_2\vec{e}_\varphi^\perp) = \lambda\rho_1(\lambda y_1) \delta(y_2)
\end{equation}
where
\begin{equation}
    \lambda = \frac{R_1}{\max\big\{|T(\bx)|:\bx\in\supp\rho_{a,b,\eta}\big\}}\,,
\end{equation}
for any $a,b\in [0,\infty)^2$, $\eta\in\mathbb{R}$ with $(a,b)\ne (0,0)$.
\end{lemma}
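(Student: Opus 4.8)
The plan is to reduce everything to the known one-dimensional minimizer $\rho_1$ by exploiting the fact that $\rho_{a,b,\eta}$ is, by construction, a pushforward of $\rho_2$ under an affine map. First I would recall that $\rho_2$ from \eqref{rho2} is a radial profile whose \emph{every} one-dimensional linear projection is a multiple of $\rho_1$: indeed, for $\rho_2(\bx)=C_2(R_2^2-|\bx|^2)_+^{s/2}$ one computes for a unit vector $\vec u$ that $\int \rho_2(\bx)\,\delta(\bx\cdot\vec u-t)\,\rd\bx = C' (R_2^2-t^2)_+^{(s+1)/2}$ — integrating out one coordinate raises the exponent from $s/2$ to $(s+1)/2$, via a Beta-function identity — and since $\rho_1(x)=C_1(R_1^2-|x|^2)_+^{(1+s)/2}$, this projected measure is exactly $\lambda\rho_1(\lambda t)\delta(\cdot)$ after rescaling, where $\lambda=R_1/R_2$ matches the radii and the $\lambda$ in front makes it a probability density. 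So the statement holds for $\rho_2$ itself (the case $a=b=1$, $\eta=0$, $T=$ projection onto $\vec e_\varphi$).

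Next I would pass to the general $\rho_{a,b,\eta}$. Writing $A = \cR_\eta \,\mathrm{diag}(a,b)$, we have $\rho_{a,b,\eta} = A_\# \rho_2$ (with the convention that when $a$ or $b$ is zero this is the obvious weak limit, a measure supported on a segment or a point). Then $T_\# \rho_{a,b,\eta} = (T A)_\# \rho_2$, and $TA$ is again a linear map $\mathbb{R}^2\to\mathbb{R}^2$ with one-dimensional image spanned by $\vec e_\varphi$ — provided $TA\neq 0$, which holds because $\mathrm{Im}(A)$ is $\{(a,b)\neq(0,0)\}$ all of $\mathbb{R}^2$ or a line through the origin, and in the degenerate cases one must check $T$ does not kill that line; I would remark this is exactly where the hypothesis $(a,b)\neq(0,0)$ enters and, in the degenerate subcase, one gets $\lambda\rho_1(\lambda y_1)\delta(y_2)$ with $\lambda$ still given by the stated formula (if the line is in $\ker T$ the pushforward is a point mass, which is the $a=0$ or $b=0$ endpoint of the formula with $\lambda=\infty$, but this cannot occur under the genericity implicit in the theorem's use — I would handle it by noting $\max\{|T\bx|:\bx\in\supp\rho_{a,b,\eta}\}>0$ whenever $TA\neq0$). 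Any nonzero linear map with one-dimensional image equals $\bx\mapsto (\vec v\cdot\bx)\vec e_\varphi$ for some $\vec v\in\mathbb{R}^2$; then $(TA)_\#\rho_2$ is supported on the line $\mathbb{R}\vec e_\varphi$ and, restricted to that line, is the pushforward of $\rho_2$ under the linear functional $\bx\mapsto \vec v\cdot\bx$.

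The remaining computation is the one-dimensional scaling. The pushforward of $\rho_2$ under $\bx\mapsto \vec v\cdot\bx$ is, by the radial-projection identity above, a multiple of $C_2(R_2^2 - (t/|\vec v|)^2)_+^{(s+1)/2}$ as a measure in $t$, i.e. a scaled copy of $\rho_1$ supported on $|t|\le |\vec v| R_2$. The extreme value $|\vec v| R_2$ is precisely $\max\{|\vec v\cdot\bx| : \bx\in\supp\rho_2\} = \max\{|T\bx|:\bx\in\supp\rho_{a,b,\eta}\}$, so matching this with $\mathrm{supp}\,\rho_1 = [-R_1/\lambda \cdot \lambda^{-1}?]$... — more carefully, $\lambda\rho_1(\lambda t)$ is supported on $|t|\le R_1/\lambda$, and equating $R_1/\lambda = |\vec v|R_2$ gives exactly $\lambda = R_1 / \max\{|T\bx|:\bx\in\supp\rho_{a,b,\eta}\}$ as claimed; the prefactor $\lambda$ (rather than any other constant) is forced by the requirement that the total mass be $1$, since $\rho_1$ is a probability density and $T_\#\rho_{a,b,\eta}$ is too. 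The main obstacle, and the only real content, is the Beta-function identity showing that integrating one Cartesian coordinate out of $C_2(R_2^2-|\bx|^2)_+^{s/2}$ yields $C_1(R_2^2-t^2)_+^{(s+1)/2}$ with the constants matching up to the $\rho_1$-normalization; everything else is bookkeeping about pushforwards and the degenerate limits. I would state this identity explicitly, verify the constant by comparing with the definitions \eqref{R2C2} and \eqref{R1C1}, and then assemble the three reductions above.
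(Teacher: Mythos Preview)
Your approach is essentially the same as the paper's: reduce to $\rho_2$ via the pushforward identity $T_\#\rho_{a,b,\eta}=(TA)_\#\rho_2$ with $A=\cR_\eta\,\mathrm{diag}(a,b)$, compute the one-dimensional projection of $\rho_2$ explicitly (the paper writes out $\int_0^{\sqrt{R_2^2-x_2^2}}((R_2^2-x_2^2)-x_1^2)^{s/2}\rd x_1$ rather than invoking a Beta identity, but it is the same computation), and fix $\lambda$ by matching supports and total mass. Your discussion of the degenerate case $TA=0$ is more careful than the paper's, which simply leaves that edge case implicit in the formula for $\lambda$.
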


\begin{proof}
We start by proving it in the isotropic case $a=b=1$, i.e., for $\rho_{a,b,\eta}=\rho_2$.
By rotation and rescaling, we may assume $\varphi=\pi/2$ and 
\begin{equation}
    T = \begin{pmatrix}
    0 & 0 \\ \sin\beta & \cos\beta
    \end{pmatrix}
\end{equation}
for some $\beta\in\mathbb{R}$. Then notice that
\begin{equation}
    T = T_1T_2,\quad T_1=\begin{pmatrix}
    0 & 0 \\ 0 & 1
    \end{pmatrix},\quad T_2=\begin{pmatrix}
    \cos\beta & -\sin\beta\\ \sin\beta & \cos\beta
    \end{pmatrix}
\end{equation}
Since $T_2$ is an orthogonal matrix and $\rho_2$ is radially symmetric, we have $(T_2)_\#\rho_2 = \rho_2$. Then $T_\#\rho_2 = (T_1)_\#\rho_2$. By explicit calculation, we have $(T_1)_\#\rho_2 = \tilde{\rho}_1(x_2)\delta(x_1)$ where
\begin{equation}
\tilde{\rho}_1(x_2) = 2\int_{0}^{\sqrt{R_2^2-x_2^2}} \big((R_2^2-x_2^2)-x_1^2\big)^{s/2}\rd{x_1} = C(R_2^2-x_2^2)^{(1+s)/2}
\end{equation}
is a rescaling of $\rho_1$ (with total mass 1). Therefore, in the general case,  \eqref{lem_lin_1} holds for some $\lambda>0$. The value of $\lambda$ is determined by matching the support of the two sides.

For $\rho_{a,b,\eta}$, the conclusion follows from the fact that $\rho_{a,b}$ is the push-forward of $\rho_2$ by the composition of $\cR_\eta$ and $\begin{pmatrix}
    a & 0 \\ 0 & b
    \end{pmatrix}$.

\end{proof}

To prove Theorem \ref{thm_ell}, the key observation is that \eqref{decomp} implies that $\big(|\bx|^{-s}\Omega(\theta)\big)*\rho_{a,b}$ is necessarily a quadratic function in $\supp\rho_{a,b}$.

\begin{lemma}\label{lem_ab}
Assume $0<s<1$ and $\Omega$ satisfies {\bf (H)}. Assume $a,b\in(0,\infty)$. Then
\begin{equation}\label{lem_ab_1}
(|\bx|^{-s}\Omega(\theta)+A x_1^2 + B x_2^2 + 2D x_1x_2)*\rho_{a,b} =  C_{\Omega,a,b},\quad \bx\in \supp\rho_{a,b}
\end{equation}
for some constant $C_{\Omega,a,b}$, with
\begin{equation}\label{lem_ab_2}\begin{split}
& \begin{pmatrix}A(a,b) \\
B(a,b) \\
D(a,b)
\end{pmatrix} = \tau_s (R_1/R_2)^{2+s}\int_{-\pi}^\pi (a^2\cos^2\varphi + b^2\sin^2\varphi)^{-(2+s)/2}\begin{pmatrix}\cos^2\varphi \\
\sin^2\varphi \\
\cos\varphi\sin\varphi
\end{pmatrix}\, \tilde{\Omega}(\varphi)\rd{\varphi}\\ 
\end{split}\end{equation}
Furthermore, if $\tilde{\Omega}\ge  0$, then $\big(|\bx|^{-s}\Omega(\theta)+A x_1^2 + B x_2^2 + 2D x_1x_2\big)*\rho_{a,b}$ achieves its minimal value on $\supp\rho_{a,b}$.

If $\tilde{\Omega}\ge  0$ and $a=0,b>0$, then the same is true provided that the integral in the expression of $A$ is finite. If $\tilde{\Omega}\ge  0$ and $a>0,b=0$, then the same is true provided that the integral in the expression of $B$ is finite. 
\end{lemma}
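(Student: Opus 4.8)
The plan is to reduce everything to the one-dimensional Riesz potential via the decomposition \eqref{decomp}. First I would recall the classical one-dimensional fact: for the 1D potential $|x|^{-s}+|x|^2$ on $\mathbb{R}$, the minimizer $\rho_1(x)=C_1(R_1^2-|x|^2)_+^{(1+s)/2}$ satisfies the Euler–Lagrange identity $(|x|^{-s}+|x|^2)*\rho_1 = \text{const}$ on $[-R_1,R_1]$, equivalently $|x|^{-s}*\rho_1 = \text{const} - |x|^2$ there; by scaling, for $\tilde\rho_1(y)=\lambda\rho_1(\lambda y)$ supported on $[-R_1/\lambda, R_1/\lambda]$ one gets $|y|^{-s}*\tilde\rho_1 = \text{const} - \lambda^{-2-s}\cdot(\text{pure quadratic in }y)$ plus a constant, where the coefficient comes out because $|y|^{-s}$ is $(-s)$-homogeneous and $\tilde\rho_1$ scales as stated. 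The essential input here is Lemma \ref{lem_lin}: the projection of $\rho_{a,b}$ onto the line spanned by $\vec e_\varphi$ is exactly a rescaled $\rho_1$, with scaling factor $\lambda = R_1/\max\{|T\bx|:\bx\in\supp\rho_{a,b}\}$ and $\max\{|T\bx|\} = (a^2\cos^2\varphi+b^2\sin^2\varphi)^{1/2}$ when $T$ is orthogonal projection onto $\vec e_\varphi$ (since $\supp\rho_{a,b}$ is the ellipse with semi-axes $aR_2,bR_2$, its width in direction $\vec e_\varphi$ is $R_2(a^2\cos^2\varphi+b^2\sin^2\varphi)^{1/2}$).

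Next I would compute $\big(|\bx\cdot\vec e_\varphi|^{-s}\big)*\rho_{a,b}(\bx)$ for $\bx\in\supp\rho_{a,b}$. Since $|\bx\cdot\vec e_\varphi|^{-s}$ depends only on the coordinate $\bx\cdot\vec e_\varphi$, convolving against $\rho_{a,b}$ reduces to convolving the 1D kernel $|t|^{-s}$ against the pushforward of $\rho_{a,b}$ under the projection $\bx\mapsto \bx\cdot\vec e_\varphi$. By Lemma \ref{lem_lin} that pushforward is $\tilde\rho_1$ with $\lambda=R_1/\big(R_2(a^2\cos^2\varphi+b^2\sin^2\varphi)^{1/2}\big)$, so for $\bx$ in the support (equivalently $\bx\cdot\vec e_\varphi$ in the interval $[-R_1/\lambda,R_1/\lambda]$) we get $\big(|\bx\cdot\vec e_\varphi|^{-s}\big)*\rho_{a,b}(\bx) = \text{const}(\varphi) - \lambda^{-2-s}\,c_s'\,(\bx\cdot\vec e_\varphi)^2$ for an explicit constant $c_s'$ coming from the 1D Euler–Lagrange relation, and $\lambda^{-2-s} = (R_2/R_1)^{2+s}(a^2\cos^2\varphi+b^2\sin^2\varphi)^{(2+s)/2}$. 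Now integrate this against $\tau_s\tilde\Omega(\varphi)\rd\varphi$ using \eqref{decomp}: the constant-in-$\bx$ parts assemble into $C_{\Omega,a,b}$, and the quadratic parts give $-\int \tau_s\tilde\Omega(\varphi)(R_2/R_1)^{2+s}(a^2\cos^2\varphi+b^2\sin^2\varphi)^{(2+s)/2}\cdot(\text{const})\cdot(\bx\cdot\vec e_\varphi)^2\rd\varphi$. Writing $(\bx\cdot\vec e_\varphi)^2 = x_1^2\cos^2\varphi + x_2^2\sin^2\varphi + 2x_1x_2\cos\varphi\sin\varphi$ and matching the coefficients of $x_1^2,x_2^2,x_1x_2$ against $-A,-B,-2D$, and being careful to track the exact exponent $-(2+s)/2$ rather than $+(2+s)/2$ — here I expect a discrepancy with \eqref{lem_ab_2} that must be reconciled by the precise form of the 1D identity and the definition of $R_1$ — yields the formula \eqref{lem_ab_2} up to verifying the constant, which I would pin down by testing the isotropic case $\Omega\equiv 1$, $a=b$, against the known minimizer $\rho_2$.

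For the "achieves its minimal value" claim, I would argue as follows. For each fixed $\varphi$, on the \emph{whole line} the function $t\mapsto |t|^{-s}*\tilde\rho_1(t)$ is minimized on the support interval $[-R_1/\lambda,R_1/\lambda]$ and is larger outside — this is again the classical 1D Euler–Lagrange/obstacle-problem fact for $\rho_1$ (the potential equals a constant on the support and exceeds it off the support, since $|t|^{-s}+|t|^2$ is the confining potential whose equilibrium measure is $\rho_1$). Hence for $\bx\notin\supp\rho_{a,b}$ we have $\big(|\bx\cdot\vec e_\varphi|^{-s}\big)*\rho_{a,b}(\bx)\ge \text{const}(\varphi) - \lambda^{-2-s}c_s'(\bx\cdot\vec e_\varphi)^2$ with equality only when $\bx\cdot\vec e_\varphi$ lies in the support interval; but the right-hand side is precisely the smooth quadratic extension that agrees with the left-hand side on the support. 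Integrating against the \emph{nonnegative} measure $\tau_s\tilde\Omega(\varphi)\rd\varphi$ preserves the inequality, giving $\big(|\bx|^{-s}\Omega(\theta)+Ax_1^2+Bx_2^2+2Dx_1x_2\big)*\rho_{a,b}(\bx)\ge C_{\Omega,a,b}$ everywhere, which is the assertion. The main obstacle I anticipate is getting the one-dimensional building block exactly right — both the precise constant $c_s'$ relating $|t|^{-s}*\tilde\rho_1$ to a quadratic (so that \eqref{lem_ab_2} comes out with the stated normalization $\tau_s(R_1/R_2)^{2+s}$ and the correct sign/exponent), and the clean statement and citation of the 1D equilibrium-measure inequality that underlies the minimality claim. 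The degenerate cases $a=0$ or $b=0$ follow by the same computation once one checks that the relevant integral over $\varphi$ converges, since then $\rho_{0,b}$ (resp. $\rho_{a,0}$) is still a well-defined probability measure and Lemma \ref{lem_lin} applies verbatim with $(a,b)\ne(0,0)$; the only new point is integrability of $(b^2\sin^2\varphi)^{-(2+s)/2}$ near the relevant $\varphi$, which is exactly the stated hypothesis.
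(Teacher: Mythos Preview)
Your approach is essentially the paper's: decompose via \eqref{decomp}, identify each one-dimensional slice via Lemma~\ref{lem_lin}, invoke the 1D Euler--Lagrange relation for $\rho_1$, and integrate in $\varphi$; the minimality claim likewise follows from the 1D inequality integrated against the nonnegative weight $\tau_s\tilde{\Omega}(\varphi)\rd\varphi$. The scaling you flag as suspicious should indeed read $\lambda^{2+s}$ rather than $\lambda^{-2-s}$: rescaling the 1D identity $(|y|^{-s}+|y|^2)*\rho_1=V_1$ by $y\mapsto y/\lambda$ gives $(|y|^{-s}+\lambda^{2+s}|y|^2)*(\lambda\rho_1(\lambda\,\cdot\,))=V_1\lambda^s$, and with $\lambda=R_1/r_\varphi$ this produces exactly the factor $(R_1/R_2)^{2+s}(a^2\cos^2\varphi+b^2\sin^2\varphi)^{-(2+s)/2}$ in \eqref{lem_ab_2}.

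Your treatment of the degenerate case $a=0$ (or $b=0$) is too quick. It is not only the finiteness of $A$ that matters: for $\varphi$ near $0$ the individual slice potential $|\bx\cdot\vec{e}_\varphi|^{-s}*\rho_{0,b}$ diverges on $\supp\rho_{0,b}$ (the projection of $\rho_{0,b}$ onto $\vec{e}_0$ is a Dirac mass), so one cannot simply run ``the same computation'' and integrate in $\varphi$ without justifying the interchange. The paper handles this by an approximation: since $A<\infty$ forces $\tilde{\Omega}(0)=0$, one can choose an increasing sequence of nonnegative smooth $\tilde{\Omega}_n\nearrow\tilde{\Omega}$ with $0\notin\supp\tilde{\Omega}_n$, apply the nondegenerate argument to each $\Omega_n$, and pass to the limit by monotone convergence (for the potential) and dominated convergence (for $A_n,B_n,D_n$).
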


The explicit formula of $C_{\Omega,a,b}$ is given by
\begin{equation}
    C_{\Omega,a,b} = V_1 \tau_s (R_1/R_2)^{s}\int_{-\pi}^\pi (a^2\cos^2\varphi + b^2\sin^2\varphi)^{-s/2}\, \tilde{\Omega}(\varphi)\rd{\varphi}
\end{equation}
where $V_1$ is given by \eqref{calcV1}.

\begin{remark}
Here, in case $a,b>0$, the steady state condition \eqref{lem_ab_1} works for possibly non-LIC potentials, showing that $\rho_{a,b}$ is always a steady state for the interaction potential $|\bx|^{-s}\Omega(\theta)+A x_1^2 + B x_2^2+ 2D x_1x_2$. It is guaranteed to be the unique (global) minimizer for LIC potentials, but it cannot even be a $d_\infty$-local minimizer for non-LIC potentials, because it violates the necessary condition given by Proposition \ref{thm_concave} and the sentence after it.
\end{remark}

\begin{remark}\label{rem_ab}
Although $R_1$ is not well-defined for $1\le s < 2$, we notice that 
\begin{align*}
    \tau_s R_1^{2+s}&=(2\pi)^{-s}\Gamma(s)\sin\frac{\pi(1-s)}{2}\Big(\frac{2\cos\frac{s\pi}{2}}{s(s+1)\pi} \beta\Big(\frac{1}{2},\frac{3+s}{2}\Big)\Big)^{-1}\\&=(2\pi)^{-s}\Gamma(s)\Big(\frac{2}{s(s+1)\pi} \beta\Big(\frac{1}{2},\frac{3+s}{2}\Big)\Big)^{-1}
\end{align*}
is well-defined for $1\le s < 2$. It turns out that \eqref{lem_ab_1} and \eqref{lem_ab_2} are also true for $1\le s < 2$ (see Lemma \ref{lem_ab2}), but requires a different proof.
\end{remark}

\begin{figure}[ht!]
    \centering
    \includegraphics[width=0.5\textwidth]{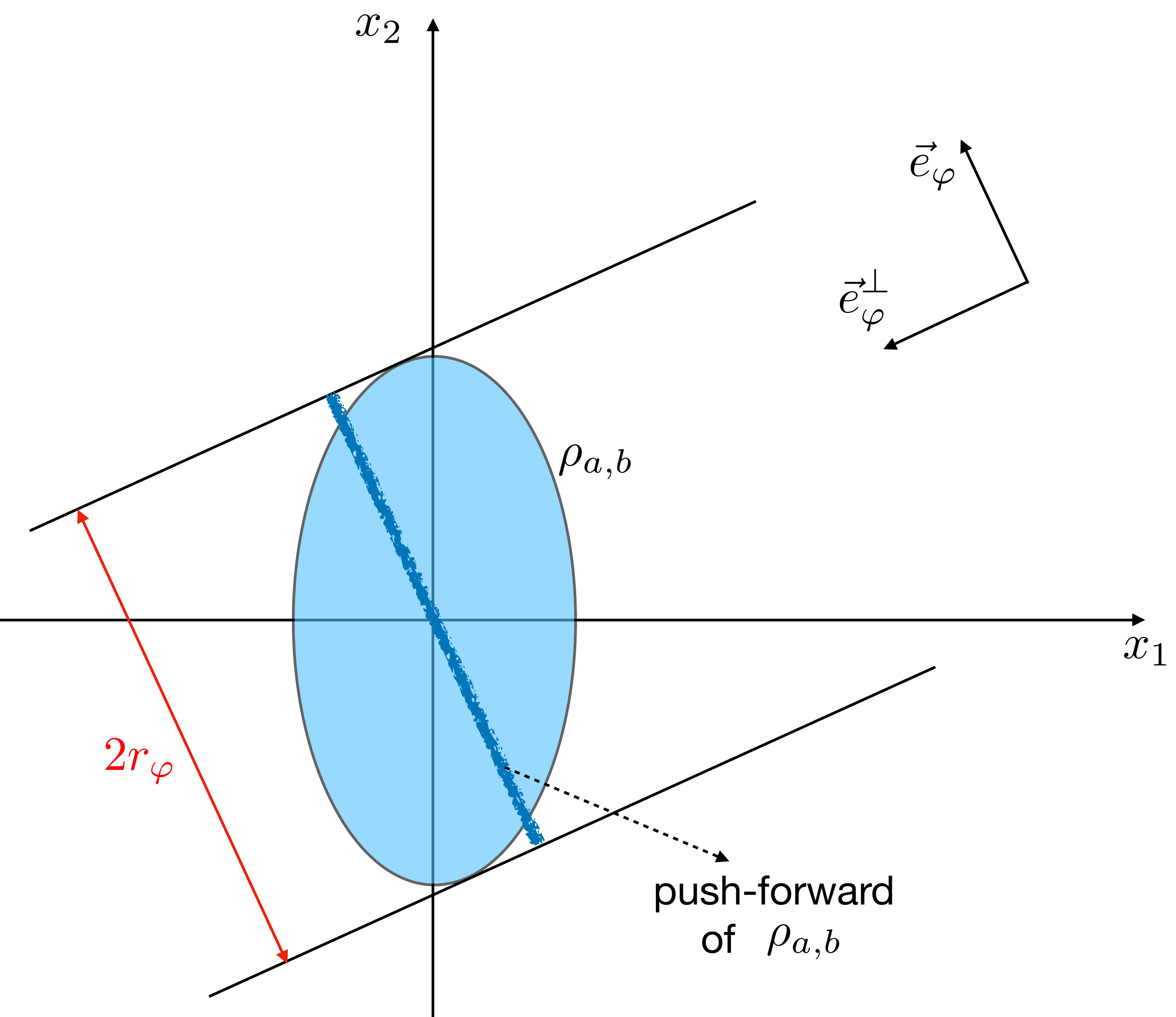}
    \caption{Calculation of the potential generated by $\rho_{a,b}$ via 1D projections.}
    \label{fig:ell}
\end{figure}

\begin{proof}
We first assume $a,b\in(0,\infty)$. Notice that for a fixed $\varphi$, the rotated 1D interaction potential $|\bx\cdot \vec{e}_\varphi|^{-s}$ generates a 1D potential
\begin{equation}\label{1Dgen}
\big(|\bx\cdot \vec{e}_\varphi|^{-s} * \rho_{a,b}\big) (y_1\vec{e}_\varphi+y_2\vec{e}_\varphi^\perp) = \int_{\mathbb{R}}|y_1-z_1|^{-s}\int_{\mathbb{R}} \rho_{a,b}(z_1\vec{e}_\varphi+z_2\vec{e}_\varphi^\perp) \rd{z_2}\rd{z_1}
\end{equation}
in the sense that the last quantity is independent of $y_2$. The inner integral $\int_{\mathbb{R}} \rho_{a,b}(z_1\vec{e}_\varphi+z_2\vec{e}_\varphi^\perp) \rd{z_2}$ is exactly the push-forward of $\rho_{a,b}$ by the projection onto the $\vec{e}_\varphi$ direction (as a function of $z_1$). This implies it is a rescaling of $\rho_1$, by Lemma \ref{lem_lin}. Since the maximum of $|z_1|$ with the constraint $z_1\vec{e}_\varphi+z_2\vec{e}_\varphi^\perp\in \supp\rho_{a,b}$ is \begin{equation}
    r_\varphi := R_2(a^2\cos^2\varphi + b^2\sin^2\varphi)^{1/2},
\end{equation}
we see that
\begin{equation}
\int_{\mathbb{R}} \rho_{a,b}(z_1\vec{e}_\varphi+z_2\vec{e}_\varphi^\perp) \rd{z_2} = \frac{R_1}{r_\varphi}\rho_1\Big(\frac{R_1}{r_\varphi}z_1\Big).
\end{equation}
See Figure \ref{fig:ell} as an illustration.

The fact that $\rho_1$ minimizes the energy associated to the potential $|x|^{-s}+|x|^2$ implies that
\begin{equation}\label{V1}
\int_{\mathbb{R}} \big(|y-z|^{-s}+|y-z|^{2}\big)\rho_1(z)\rd{z} \left\{\begin{split}
   & = V_1,\quad y\in [-R_1,R_1] \\
   & > V_1,\quad y\notin [-R_1,R_1] \\
\end{split}\right.
\end{equation}
for some constant $V_1$ given in \eqref{calcV1}. 
Rescaling by some $\lambda>0$, we get
\begin{equation}
\int_{\mathbb{R}} \big(|y-z|^{-s}+\lambda^{2+s}|y-z|^{2}\big)\lambda\rho_1(\lambda z)\rd{z} \left\{\begin{split}
   & = V_1\lambda^{s},\quad y\in [-R_1/\lambda,R_1/\lambda] \\
   & > V_1\lambda^{s},\quad y\notin [-R_1/\lambda,R_1/\lambda] \\
\end{split}\right.\,.
\end{equation}
Applying to \eqref{1Dgen}, we see that
\begin{equation}
\Big(\Big(|\bx\cdot \vec{e}_\varphi|^{-s}+\Big(\frac{R_1}{r_\varphi}\Big)^{2+s}|\bx\cdot \vec{e}_\varphi|^2\Big) * \rho_{a,b}\Big) (y_1\vec{e}_\varphi+y_2\vec{e}_\varphi^\perp) \left\{\begin{split}
   & = V_1\Big(\frac{R_1}{r_\varphi}\Big)^s,\quad y_1\in [-r_\varphi,r_\varphi] \\
   & > V_1\Big(\frac{R_1}{r_\varphi}\Big)^s,\quad y_1\notin [-r_\varphi,r_\varphi] \\
\end{split}\right.\,.
\end{equation}
In particular, $\big(|\bx\cdot \vec{e}_\varphi|^{-s}+(\frac{R_1}{r_\varphi})^{2+s}|\bx\cdot \vec{e}_\varphi|^2\big) * \rho_{a,b}$ achieves minimum on $\supp\rho_{a,b}$.

Combining with \eqref{decomp} and integrating in $\varphi$, we see that 
\begin{equation}
    \Big(|\bx|^{-s}\Omega(\theta) + \tau_s \int_{-\pi}^\pi \Big(\frac{R_1}{r_\varphi}\Big)^{2+s}|\bx\cdot \vec{e}_\varphi|^2 \tilde{\Omega}(\varphi)\rd{\varphi}\Big)*\rho_{a,b}
\end{equation}
is constant on $\supp\rho_{a,b}$, and achieves minimum on $\supp\rho_{a,b}$ if $\tilde{\Omega}\ge 0$. The last integral is 
\begin{equation*}
    \tau_s (R_1/R_2)^{2+s}\int_{-\pi}^\pi\!\!\! (a^2\cos^2\varphi + b^2\sin^2\varphi)^{-(2+s)/2}(x_1^2\cos^2\varphi+x_2^2\sin^2\varphi+2x_1x_2\cos\varphi\sin\varphi) \tilde{\Omega}(\varphi)\rd{\varphi}
\end{equation*}
which is a quadratic function in $\bx$. Therefore we get the conclusion \eqref{lem_ab_1} with the coefficients $A,B,D$ given by \eqref{lem_ab_2}.

Finally we treat the case $\tilde{\Omega}\ge  0$, $a=0,b>0$ (the case $a>0,b=0$ is similar). The previous argument clearly works if $0\notin \supp\tilde{\Omega}$ since the integrals in \eqref{lem_ab_2} are locally integrable. To treat the general case, we take a sequence of nonnegative  functions $\{\tilde{\Omega}_n\}$ satisfying {\bf (H)} such that $0\notin\supp\tilde{\Omega}_n$,  $\{\tilde{\Omega}_n(\varphi)\}$ is increasing in $n$ and converges to $\tilde{\Omega}(\varphi)$ for every $\varphi$ (noticing that the assumption $A<\infty $ implies $\tilde{\Omega}(0)=0$, such a sequence can be obtained by multiplying $\tilde{\Omega}$ with some mollifiers). Then for any $\bx\in\supp\rho_{0,b}$,  the monotone convergence theorem shows that $\big(|\bx|^{-s}\Omega_n(\theta)*\rho_{0,b}\big)(\bx)$ converges to $\big(|\bx|^{-s}\Omega(\theta)*\rho_{0,b}\big)(\bx)$. Also, $A_n,B_n$ converge to $A,B$ since  $B<\infty$ is clear and $A<\infty$ is assumed, and then $D_n$ converges to $D$ since the integrand of $D_n$ is dominated by that of $A_n+B_n$. Therefore $\big((|\bx|^{-s}\Omega_n(\theta) + A_n x_1^2+ B_n x_2^2 + 2D_n x_1x_2\big)*\rho_{0,b})(\bx)$ converges to $\big((|\bx|^{-s}\Omega(\theta) + A x_1^2 + B x_2^2 + 2D x_1x_2)*\rho_{0,b}\big)(\bx)$. This implies $C_{\Omega_n,0,b}$ converges, say, to $C_{\Omega,0,b}$. This justifies \eqref{lem_ab_1}. 

To see that $\big(|\bx|^{-s}\Omega(\theta) + A x_1^2 + B x_2^2 + 2D x_1x_2\big)*\rho_{0,b}$ achieves its minimal value on $\supp\rho_{a,b}$, we notice that the previous arguments applied to  $\Omega_n$ shows that $\big((|\bx|^{-s}\Omega_n(\theta)+ A_n x_1^2 + B_n x_2^2 + 2D_n x_1x_2)*\rho_{0,b}\big)(\bx)  \ge C_{\Omega_n,0,b}$ for any $\bx$. Sending $n\rightarrow\infty$, we get the conclusion by the monotone convergence theorem.

\end{proof}

Then we seek the values of $(a,b,\eta)$ which match with $(A,B,D)=(1,1,0)$ specified by the the quadratic potential in \eqref{W}.
\begin{lemma}\label{lem_abf}
For $0<s<1$ and a given $\Omega$ satisfying {\bf (H)}, let $A_\eta,B_\eta,D_\eta$ be given by \eqref{lem_ab_2} with $\Omega$ replaced by $\Omega(\eta+\cdot)$, as functions of $(a,b)$. If $\tilde{\Omega}\ge 0$, then at least one of the following two statements is true:
\begin{itemize}
    \item[i)] There exists $(a,b,\eta)\in (0,\infty)^2\times[0,\pi/2)$ such that $A_\eta(a,b)=1,B_\eta(a,b)=1,D_\eta(a,b)=0$. 
    \item[ii)] There exists $(b,\eta)\in(0,\infty)\times[0,\pi)$ such that $B_\eta(0,b)=1,D_\eta(0,b)=0$ and $0\le A_\eta(0,b)\le 1$.
\end{itemize}
Furthermore, if $\tilde{\Omega}\ge c > 0$, then item {i)} must happen.
\end{lemma}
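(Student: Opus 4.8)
The plan is to package the three requirements $A_\eta(a,b)=B_\eta(a,b)=1$, $D_\eta(a,b)=0$ into a single symmetric‑matrix equation and solve it topologically. Set $M_\eta(a,b):=\begin{pmatrix}A_\eta(a,b)&D_\eta(a,b)\\ D_\eta(a,b)&B_\eta(a,b)\end{pmatrix}$, so that the goal is $M_\eta(a,b)=I$. Applying \eqref{lem_ab_2} to $\Omega(\eta+\cdot)$ (whose Fourier profile $\tilde{\,\cdot\,}$ is $\tilde\Omega(\eta+\cdot)$, still $\ge 0$) and substituting $\varphi\mapsto\varphi-\eta$ gives
\[
M_\eta(a,b)=\cR_{-\eta}\,H(Q)\,\cR_{\eta},\qquad H(Q):=\tau_s(R_1/R_2)^{2+s}\!\int_{-\pi}^{\pi}\!(\vec e_\varphi\cdot Q\vec e_\varphi)^{-(2+s)/2}\,\vec e_\varphi\vec e_\varphi^{\,T}\,\tilde\Omega(\varphi)\rd\varphi,
\]
with $Q:=\cR_\eta\,\mathrm{diag}(a^2,b^2)\,\cR_{-\eta}$, which runs over all positive semidefinite symmetric $2\times2$ matrices as $(a,b,\eta)$ varies over $[0,\infty)^2\times[0,\pi/2)$ (with $a,b$ in either order). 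Hence $M_\eta(a,b)=I\iff H(Q)=I$; item~(i) is the case $Q$ positive definite, item~(ii) the case $Q$ of rank one. Since $H(\lambda Q)=\lambda^{-(2+s)/2}H(Q)$, it suffices to find a nonzero positive semidefinite $Q$ with $H(Q)$ a positive multiple of $I$ and then rescale.

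\textbf{The case $\tilde\Omega\ge c>0$.} I would pass to the quotient by positive scaling: positive semidefinite symmetric $2\times 2$ matrices modulo $\mathbb R_{>0}$ form a closed disk $\bar{\mathbb D}$, whose interior consists of the positive definite classes and whose boundary $\cong\mathbb{RP}^1$ consists of the rank‑one classes $[\vec e_\beta\vec e_\beta^{\,T}]$, $\beta\in\mathbb R/\pi\mathbb Z$. Then $H$ descends to $\bar H$, a priori defined on the interior with values in the interior. The key point is the boundary behaviour: as $Q\to\vec e_\beta\vec e_\beta^{\,T}$ the weight $(\vec e_\varphi\cdot Q\vec e_\varphi)^{-(2+s)/2}$ develops a nonintegrable singularity (exponent $2+s>1$) concentrated at $\varphi=\beta\pm\tfrac\pi2$, and because $\tilde\Omega(\beta+\tfrac\pi2)\ge c>0$ this forces $H(Q)$ to blow up along $\vec e_{\beta+\pi/2}$, so $\bar H(Q)\to[\vec e_{\beta+\pi/2}\vec e_{\beta+\pi/2}^{\,T}]$. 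Thus $\bar H$ extends continuously to $\bar{\mathbb D}\to\bar{\mathbb D}$, carrying $\partial\bar{\mathbb D}$ into $\partial\bar{\mathbb D}$ by the map $\beta\mapsto\beta+\tfrac\pi2$, a degree‑one homeomorphism of $\mathbb{RP}^1$. Since $\bar H|_{\partial\bar{\mathbb D}}$ extends over $\bar{\mathbb D}$ but is not null‑homotopic in $\bar{\mathbb D}\setminus\{[I]\}$, the centre $[I]$ lies in $\bar H(\bar{\mathbb D})$; not being in the image of the boundary, it lies in $\bar H(\text{interior})$, producing a positive definite $Q$ with $H(Q)\in\mathbb R_{>0}I$ — item~(i) after rescaling.

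\textbf{General $\tilde\Omega\ge 0$.} Apply the previous step to $\Omega_\delta:=\Omega+c_0\delta$, $\delta>0$ (still satisfying {\bf (H)}, with Fourier profile $\tilde\Omega+\delta\ge\delta>0$), to get positive definite $Q_\delta$ with $H_\delta(Q_\delta)=I$, where $H_\delta(Q)=H(Q)+\delta K(Q)$ and $K(Q):=\tau_s(R_1/R_2)^{2+s}\int_{-\pi}^{\pi}(\vec e_\varphi\cdot Q\vec e_\varphi)^{-(2+s)/2}\vec e_\varphi\vec e_\varphi^{\,T}\rd\varphi$ is positive semidefinite. Then $H(Q_\delta)=I-\delta K(Q_\delta)$, so $0\preceq H(Q_\delta)\preceq I$; homogeneity together with $\tilde\Omega\not\equiv 0$ (which makes $\int\vec e_\varphi\vec e_\varphi^{\,T}\tilde\Omega(\varphi)\rd\varphi$ positive definite) bounds $\|Q_\delta\|$ from above and below, so along a subsequence $Q_\delta\to Q_*\ne 0$. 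If $Q_*$ is positive definite, $H(Q_*)=I$ by continuity and we are in item~(i). If $Q_*$ has rank one, say $Q_*=\ell^2\vec e_{\eta_*+\pi/2}\vec e_{\eta_*+\pi/2}^{\,T}$, corresponding to $\rho_{0,b,\eta_*}$: Fatou on the nonnegative diagonal integrands gives $A_{\eta_*}(0,b)=\big(\cR_{-\eta_*}H(Q_*)\cR_{\eta_*}\big)_{11}\le 1$; finiteness of this entry forces $\tilde\Omega$ to vanish at $\eta_*$ to order $\ge 2>1+s$, which renders the integral defining $D_{\eta_*}(0,b)$ absolutely convergent; dominated convergence from $B_\delta(\cdot)=1$, $D_\delta(\cdot)=0$ then gives $B_{\eta_*}(0,b)=1$ and $D_{\eta_*}(0,b)=0$. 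This is item~(ii), up to relabelling via the $\pi/2$‑symmetry if the degeneration is $b_\delta\to 0$ rather than $a_\delta\to 0$.

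\textbf{Main obstacle.} The delicate part is the last step: passing to the limit inside the $\varphi$‑integrals while $Q_\delta$ degenerates. One must verify that the correction $\delta K(Q_\delta)$ leaves $B$ and $D$ unchanged in the limit even though $K(Q_\delta)\to\infty$ — exploiting that $0\preceq\delta K(Q_\delta)\preceq I$ is bounded and that the escaping mass is concentrated in the single direction $\vec e_{\eta_*}$, hence only loads the $A$‑entry, which is precisely the one we need only to bound by $1$ — and one must exhibit a uniform integrable majorant near the moving singular direction $\vec e_{\eta_*}$, using the order‑$\ge 2$ vanishing of $\tilde\Omega$ there. (A more hands‑on route runs into essentially the same difficulty: for suitable $\eta$ one solves $A_\eta=B_\eta=1$ by first reducing, via homogeneity, to matching a shape parameter $\psi\in(0,\pi/2)$ — the relevant difference running from $\le 0$ at $\psi\to0$ when $\tilde\Omega(\eta+\tfrac\pi2)>0$ to $\ge 0$ at $\psi\to\tfrac\pi2$ when $\tilde\Omega(\eta)>0$ — and then kills $D$ by varying $\eta$, using $D_{\eta+\pi/2}(a,b)=-D_\eta(b,a)$ so that $\eta\mapsto D_\eta$ along a branch of solutions is odd under $\eta\mapsto\eta+\tfrac\pi2$; the obstruction is the continuous selection of that branch and the control of its degenerate endpoints.)
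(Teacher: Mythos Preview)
Your matrix reformulation $H(Q)=I$ and the degree argument on the disk $\bar{\mathbb D}$ for the strictly positive case are correct and genuinely different from the paper's route. The paper instead runs a two–stage intermediate value argument: for each $\eta$ it first finds the unique $(a_\eta,b_\eta)$ with $A_\eta=B_\eta=1$ (IVT on the ratio $A(1,b)/B(1,b)$, uniqueness from the negative–definite Jacobian $\partial(A,B)/\partial(a,b)$), and then shows $g(\eta):=D_\eta(a_\eta,b_\eta)$ satisfies $g(\pi/2)=-g(0)$, so IVT gives a zero of $g$. Your topological packaging avoids the branch–selection issue you flag in your parenthetical and is arguably cleaner, at the price of checking continuity of $\bar H$ up to $\partial\bar{\mathbb D}$ (which is routine once one notes that in the eigenframe of $Q$ the $A$–entry diverges while $B,D$ stay bounded).

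The approximation step is essentially the paper's, and your Fatou bound $A_{\eta_*}(0,b)\le 1$ together with the deduction $\tilde\Omega(\eta_*)=0$ (hence order $\ge 2$) is correct. The one real gap is your claim that dominated convergence gives $D_{\eta_*}(0,b)=0$. The singularity of the $D$–integrand sits at $\varphi=0$ in the $\eta_\delta$–frame, while the quadratic zero of $\tilde\Omega$ sits at $\varphi=\eta_*-\eta_\delta$; without controlling the rate $|\eta_\delta-\eta_*|$ relative to $a_\delta$ you do not get a uniform integrable majorant, and no such rate is available. The paper sidesteps this: it applies Fatou to the full quadratic form $v\mapsto v^{T}H(Q_\delta)v\le |v|^2$ to obtain $H(Q_*)\preceq I$, and then combines this with $B_{\eta_*}(0,b)=1$ (which \emph{does} follow from DCT, since $\sin^2\varphi$ cancels the singularity) to conclude $D_{\eta_*}(0,b)=0$ from the positive semidefiniteness of $I-H(Q_*)$ with a zero diagonal entry. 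Replacing your DCT step for $D$ by this Fatou/psd argument closes the gap with no extra work.
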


\begin{proof}

{\bf STEP 1}: 
Treat the case $\tilde{\Omega}\ge c > 0$. 

We claim that there exists a unique pair $(a,b)\in (0,\infty)^2$ such that $A(a,b)=B(a,b)=1$. We first show the existence. Since $A$ and $B$ are homogeneous in $(a,b)$, it suffices to find $b\in (0,\infty)$ such that
\begin{equation}\label{fb}
    f(b) = \frac{A(1,b)}{B(1,b)} = \frac{\int_{-\pi}^\pi (\cos^2\varphi + b^2\sin^2\varphi)^{-(2+s)/2}\cos^2\varphi \tilde{\Omega}(\varphi)\rd{\varphi}}{\int_{-\pi}^\pi (\cos^2\varphi + b^2\sin^2\varphi)^{-(2+s)/2}\sin^2\varphi \tilde{\Omega}(\varphi)\rd{\varphi}} = 1.
\end{equation}
It is clear that $f(b)$ is continuous. As $b\rightarrow 0^+$, the numerator converges to $\int_{-\pi}^\pi|\cos\varphi|^{-s} \tilde{\Omega}(\varphi)\rd{\varphi} < \infty$ but the denominator goes to $\infty$ because $\tilde{\Omega}\ge c > 0$ and the pointwise limit of the integrand has a non-integrable singularity at $\pi/2$. Therefore $\lim_{b\rightarrow 0^+} f(b) = 0$. Similarly, $\lim_{b\rightarrow \infty} f(b) = \infty$. Therefore we conclude that there exists $b$ such that $f(b)=1$.

To see the uniqueness of such $(a,b)$, we notice that for $a,b>0$, the Jacobian
\begin{equation}
    \frac{\partial(A,B)}{\partial(a,b)} = -\begin{pmatrix} a\int \cos^4\varphi  &  a\int \cos^2\varphi\sin^2\varphi  \\
    b\int \cos^2\varphi\sin^2\varphi & b\int \sin^4\varphi \\
    \end{pmatrix}
\end{equation}
where the integrals are in $\varphi\in[-\pi,\pi]$ with respect to the weight $(s+2)\tau_s (R_1/R_2)^{2+s}(a^2\cos^2\varphi + b^2\sin^2\varphi)^{-(s+4)/2}\tilde{\Omega}(\varphi)\rd{\varphi}$. Since $\tilde{\Omega}\ge c>0$, the matrices 
$$
\begin{pmatrix} a \cos^4\varphi  &  a \cos^2\varphi\sin^2\varphi  \\
b \cos^2\varphi\sin^2\varphi & b \sin^4\varphi \\
\end{pmatrix}
$$ 
are semi-positive-definite rank-1 matrices for every $\varphi$, having different eigenvectors for different $\varphi\in [0,\pi/2]$, we see that $\frac{\partial(A,B)}{\partial(a,b)}$ is negative-definite for any $a,b>0$. This implies the injective property of the map $(a,b)\mapsto (A,B)$, which finishes the proof of the claim.

Notice that $\Omega(\cdot+\eta)$ corresponds to $\tilde{\Omega}(\cdot+\eta)$ on the Fourier side, and therefore the previous claim applies to $\Omega(\cdot+\eta)$. Then we see that for every $\eta\in [0,\pi/2]$, we have a unique pair $(a_\eta,b_\eta)$ such that $A_\eta(a_\eta,b_\eta)=B_\eta(a_\eta,b_\eta)=1$. Since the map  $(a,b,\eta)\mapsto (A_\eta(a,b),B_\eta(a,b))$ is clearly smooth on $(0,\infty)^2\times\mathbb{R}$, we see that $a_\eta,b_\eta$ depend continuously on $\eta$, and so does $g(\eta) := D_\eta(a_\eta,b_\eta)$. 

Notice that $a_{\pi/2}=b_0,\,b_{\pi/2}=a_0$ since $\rho_{b,a}$ is exactly the rotation of $\rho_{a,b}$ by $\pi/2$. Therefore
\begin{equation}\begin{split}
    g(\frac{\pi}{2}) = & \tau_s (R_1/R_2)^{2+s}\int_{-\pi}^\pi (b_0^2\cos^2\varphi + a_0^2\sin^2\varphi)^{-(2+s)/2}
\cos\varphi\sin\varphi
\, \tilde{\Omega}(\varphi+\frac{\pi}{2})\rd{\varphi} \\
= & \tau_s (R_1/R_2)^{2+s}\int_{-\pi}^\pi (b_0^2\sin^2\varphi + a_0^2\cos^2\varphi)^{-(2+s)/2}
\sin\varphi(-\cos\varphi)
\, \tilde{\Omega}(\varphi)\rd{\varphi} = -g(0)\\
\end{split}\end{equation}
by a change of variable $\varphi\mapsto \varphi-\pi/2$. Therefore, the continuity of $g$ implies the existence of $\eta\in[0,\pi/2)$ such that $g(\eta)=0$, which gives the conclusion.

{\bf STEP 2}: Treat the general case $\tilde{\Omega}\ge 0$.

We define $\Omega_n = \frac{1}{n} + (1-\frac{1}{n})\Omega$, and then the corresponding $\tilde{\Omega}_n = \frac{1}{n} c_s + (1-\frac{1}{n})\tilde{\Omega}$ is bounded from below by $\frac{1}{n} c_s>0$. Applying STEP 1, we get $a_n,b_n,\eta_n$ with $A_{\eta_n}(a_n,b_n)=B_{\eta_n}(a_n,b_n)=1,\,D_{\eta_n}(a_n,b_n)=0$. By subtracting a subsequence, we may assume that $a_n\rightarrow a\in [0,\infty]$, $b_n\rightarrow b\in [0,\infty]$, $\eta_n\rightarrow \eta\in [0,\pi/2]$.

It is clear that $a<\infty$ because otherwise one would have
\begin{equation}
    A_{\eta_n}(a_n,b_n) \le a_n^{-(2+s)}\|\tilde{\Omega}\|_{L^\infty}\tau_s (R_1/R_2)^{2+s}\int_{-\pi}^\pi |\cos\varphi|^{-s}\rd{\varphi} \rightarrow 0,\quad \text{ as }n\rightarrow\infty
\end{equation}
contradicting to $A_{\eta_n}(a_n,b_n)=1$. Similarly $b<\infty$. It is also clear that $(a,b)\ne (0,0)$ because otherwise one would have $A_{\eta_n}(a_n,b_n)+B_{\eta_n}(a_n,b_n)\rightarrow\infty$.

Then we separate into cases:
\begin{itemize}
    \item If $a,b>0$, then it is clear that $A_{\eta_n}(a_n,b_n)\rightarrow A_\eta(a,b)$ and similarly for $B$ and $D$, and we get the conclusion with item i) in the statement.
    \item If $a=0,b>0$, then $B_{\eta_n}(a_n,b_n)\rightarrow B_\eta(0,b)$ by the dominated convergence theorem, and we get $B_\eta(0,b)=1$. We will then prove item ii) in the statement of the theorem by proving     \begin{equation}\label{ABD_bound}
        0\le A_\eta(0,b)x_1^2+B_\eta(0,b)x_2^2+2D_\eta(0,b)x_1x_2\le x_1^2+x_2^2
    \end{equation} for any $(x_1,x_2)$.
    In fact, the lower bound follows from
    \begin{equation*}\begin{split}
        & A_\eta(a,b)x_1^2+B_\eta(a,b)x_2^2+2D_\eta(a,b)x_1x_2 \\
        = & \tau_s (R_1/R_2)^{2+s}\int_{-\pi}^\pi (a^2\cos^2\varphi + b^2\sin^2\varphi)^{-(2+s)/2}(x_1\cos\varphi+x_2\sin\varphi)^2
\, \tilde{\Omega}(\varphi+\eta)\rd{\varphi}\ge 0
    \end{split}\end{equation*}
    for any $(x_1,x_2)$. To see the upper bound, we use Fatou's Lemma to get
    \begin{equation*}\begin{split}
        & \tau_s (R_1/R_2)^{2+s}\int_{-\pi}^\pi (a^2\cos^2\varphi + b^2\sin^2\varphi)^{-(2+s)/2}(x_1\cos\varphi+x_2\sin\varphi)^2
        \, \tilde{\Omega}(\varphi+\eta)\rd{\varphi}\\ 
        = & \tau_s (R_1/R_2)^{2+s}\int_{-\pi}^\pi \lim_{n\rightarrow\infty}(a_n^2\cos^2\varphi + b_n^2\sin^2\varphi)^{-(2+s)/2}(x_1\cos\varphi+x_2\sin\varphi)^2
        \, \tilde{\Omega}(\varphi+\eta_n)\rd{\varphi}\\ 
        \le & \lim_{n\rightarrow\infty}\tau_s (R_1/R_2)^{2+s}\int_{-\pi}^\pi (a_n^2\cos^2\varphi + b_n^2\sin^2\varphi)^{-(2+s)/2}(x_1\cos\varphi+x_2\sin\varphi)^2
        \, \tilde{\Omega}(\varphi+\eta_n)\rd{\varphi}\\ 
        = & \lim_{n\rightarrow\infty}A_{\eta_n}(a_n,b_n)x_1^2+B_{\eta_n}(a_n,b_n)x_2^2+2D_{\eta_n}(a_n,b_n)x_1x_2 = x_1^2+x_2^2.
    \end{split}\end{equation*}
    \item If $a>0,b=0$, then we get item ii) in the statement of the theorem with $\eta\in [\pi/2,\pi)$, similar to the previous case.
\end{itemize}

\end{proof}

\begin{proof}[Proof of Theorem \ref{thm_ell}]
If $\tilde{\Omega}\ge c > 0$, then Lemma \ref{lem_abf} shows the existence of $(a,b,\eta)\in(0,\infty)^2\times [0,\pi/2)$ such that $A_\eta=B_\eta=1,D_\eta=0$. Then Lemma \ref{lem_ab} shows that $\rho_{a,b}$ satisfies the condition \eqref{EL} for $\Omega(\cdot+\eta)$. Therefore $\rho_{a,b}$ is the unique global energy minimizer for $\Omega(\cdot+\eta)$, i.e., $\rho_{a,b,\eta}$ is the unique global energy minimizer for $\Omega$.

If we only assume $\tilde{\Omega}\ge 0$, then either we have the same situation as before, or we have item ii) as in Lemma \ref{lem_abf}. For the case of item ii), we have some $(b,\eta)\in (0,\infty)\times [0,\pi)$ such that  $B=1,\,D=0$ and $0\le A \le 1$ (where $B$ refers to $B_\eta(0,b)$, and similar for $A,D$ below). Then Lemma \ref{lem_ab} shows that $(|\bx|^{-s}\Omega(\theta+\eta)+A x_1^2 + x_2^2)*\rho_{0,b}$ achieves its minimum on $\supp\rho_{0,b}$. Therefore $(|\bx|^{-s}\Omega(\theta+\eta)+x_1^2 + x_2^2)*\rho_{0,b}$ also achieves its minimum on $\supp\rho_{0,b}$, i.e., satisfies the condition \eqref{EL}. Therefore $\rho_{0,b}$ is the unique global energy minimizer for $\Omega(\cdot+\eta)$, i.e., $\rho_{0,b,\eta}$ is the unique global energy minimizer for $\Omega$.
\end{proof}

\section{A special solution to the gradient flow}
\label{sec:gradflow}

The Wasserstein-2 gradient flow associated to the energy \eqref{E} formally reads
\begin{equation}\label{flow}
    \partial_t \rho + \nabla\cdot (\rho\bu) = 0,\quad \bu = -\nabla W*\rho
\end{equation}
where $\rho(t,\bx)$ is a time-dependent particle density function defined on $\mathbb{R}_+\times \mathbb{R}^2$. Lemma \ref{lem_ab} shows that $\bu$ is linear if $\rho$ has the form $\rho_{a,b,\eta}$ at some $t$. As a consequence, the class of functions $\rho_{a,b,\eta}$ is invariant under the gradient flow \eqref{flow}. For simplicity, we will further assume $\Omega(\theta)=\Omega(-\theta)$, so that $D=0$ always holds. In this case, we will focus on the invariance of the class of functions $\rho_{a,b}$. The general case will be briefly discussed in Remark \ref{rem_flow}.

\begin{theorem}\label{thm_ode}
Assume $0<s<1$ and $W$ is given by \eqref{W} with $\Omega$ satisfying {\bf (H)} and $\Omega(\theta)=\Omega(-\theta)$. Assume the positive functions $a(t),b(t)$ solve the ODE system
\begin{equation}\label{ab_ode}\left\{\begin{split}
    & a'(t) = 2\big(A(a(t),b(t))-1\big)a(t) \\
    & b'(t) = 2\big(B(a(t),b(t))-1\big)b(t) \\
\end{split}\right.
\end{equation}
where $A,B$ are defined in \eqref{lem_ab_2}. Then $\rho(t,\cdot)=\rho_{a(t),b(t)}$ solves \eqref{flow}.
\end{theorem}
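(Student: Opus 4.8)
The plan is to compute the driving velocity field $\bu=-\nabla W*\rho_{a,b}$ on the open elliptical region that carries the mass of $\rho_{a,b}$, observe that it is a spatially linear field, and then verify that $t\mapsto\rho_{a(t),b(t)}$ is the pushforward of $\rho_2$ along the flow of that field precisely when $(a(t),b(t))$ solves \eqref{ab_ode}. Here ``$\rho$ solves \eqref{flow}'' is understood in the weak (distributional) sense.

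First I would assemble the shape of $W*\rho_{a,b}$ on $\supp\rho_{a,b}$. Since $\rho_2$ is radial, $\rho_{a,b}$ has center of mass at the origin, so $\big(|\bx|^2*\rho_{a,b}\big)(\bx)=|\bx|^2+M_2(a,b)$ with $M_2(a,b)=\int_{\mathbb{R}^2}|\by|^2\rho_{a,b}(\by)\rd\by<\infty$ a constant. Because $\Omega(\theta)=\Omega(-\theta)$ we have $D(a,b)=0$ in \eqref{lem_ab_2}, so Lemma \ref{lem_ab} gives $\big(|\bx|^{-s}\Omega(\theta)\big)*\rho_{a,b}(\bx)=C_{\Omega,a,b}-A(a,b)x_1^2-B(a,b)x_2^2$ on $\supp\rho_{a,b}$. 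Adding,
\[
\big(W*\rho_{a,b}\big)(\bx)=\big(1-A(a,b)\big)x_1^2+\big(1-B(a,b)\big)x_2^2+\textrm{const},\qquad \bx\in\supp\rho_{a,b}.
\]
For $0<s<1$ the gradient of $|\bx|^{-s}\Omega(\theta)$ is homogeneous of degree $-1-s$ and hence lies in $L^1_{\mathrm{loc}}(\mathbb{R}^2)$, so $W*\rho_{a,b}\in C^1(\mathbb{R}^2)$; since $\supp\rho_{a,b}$ is the closed ellipse $\{(x_1/a)^2+(x_2/b)^2\le R_2^2\}$, which has nonempty interior, the identity above persists after differentiation on the open ellipse $\mathcal{E}_{a,b}:=\{(x_1/a)^2+(x_2/b)^2<R_2^2\}$, a set carrying the full mass of $\rho_{a,b}$. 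Therefore, on $\mathcal{E}_{a,b}$,
\[
\bu_{a,b}(\bx):=-\nabla\big(W*\rho_{a,b}\big)(\bx)=\big(2(A(a,b)-1)x_1,\ 2(B(a,b)-1)x_2\big).
\]

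Next I would run the Lagrangian argument. Write $a_0=a(0),b_0=b(0)$, $\rho_0:=\rho_{a_0,b_0}$, and $\Phi_t(\bx):=\big(\tfrac{a(t)}{a_0}x_1,\tfrac{b(t)}{b_0}x_2\big)$, a linear diffeomorphism carrying $\mathcal{E}_{a_0,b_0}$ onto $\mathcal{E}_{a(t),b(t)}$. Since $\rho_{a,b}$ is the pushforward of $\rho_2$ by $\mathrm{diag}(a,b)$, we get $\rho(t):=\rho_{a(t),b(t)}=(\Phi_t)_\#\rho_0$. Differentiating $\Phi_t$ in $t$ and substituting \eqref{ab_ode},
\[
\partial_t\Phi_t(\bx)=\Big(\tfrac{a'(t)}{a(t)}\big(\Phi_t(\bx)\big)_1,\ \tfrac{b'(t)}{b(t)}\big(\Phi_t(\bx)\big)_2\Big)=\big(2(A-1),\,2(B-1)\big)\cdot\Phi_t(\bx)=\bu_{a(t),b(t)}\big(\Phi_t(\bx)\big),
\]
with $A=A(a(t),b(t))$ and $B=B(a(t),b(t))$; that is, $\Phi_t$ is the flow of the spatially linear field $\bu_{a(t),b(t)}$. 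Consequently, for any $\phi\in C_c^\infty(\mathbb{R}^2)$,
\[
\frac{\rd}{\rd t}\int_{\mathbb{R}^2}\phi\,\rho(t)\,\rd\bx=\int_{\mathbb{R}^2}\nabla\phi\big(\Phi_t(\bx)\big)\cdot\partial_t\Phi_t(\bx)\,\rho_0(\bx)\,\rd\bx=\int_{\mathbb{R}^2}\nabla\phi(\by)\cdot\bu_{a(t),b(t)}(\by)\,\rho(t,\by)\,\rd\by,
\]
and since $\rho(t)$ is concentrated on $\mathcal{E}_{a(t),b(t)}$, where $\bu_{a(t),b(t)}=-\nabla W*\rho(t)$, the right-hand side equals $\int \nabla\phi\cdot(-\nabla W*\rho(t))\,\rho(t)\,\rd\by$. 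This is exactly the weak formulation of $\partial_t\rho+\nabla\cdot(\rho\,\bu)=0$ with $\bu=-\nabla W*\rho$, proving the theorem.

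The main point needing care — and the only genuine obstacle — is the regularity bookkeeping in the second paragraph: that $W*\rho_{a,b}\in C^1$ so that $\bu=-\nabla W*\rho$ is an honest continuous function, and that the pointwise steady-state identity of Lemma \ref{lem_ab} may be differentiated inside the ellipse to produce the linear field on all of $\mathcal{E}_{a,b}$ (where the mass lives). The integrability $|\bx|^{-1-s}\in L^1_{\mathrm{loc}}(\mathbb{R}^2)$, valid precisely because $0<s<1$, is what underpins this. One should also note that $A(a,b),B(a,b)$ are smooth on $(0,\infty)^2$ (parameter integrals converging locally uniformly), so the right-hand side of \eqref{ab_ode} is locally Lipschitz; the theorem takes the existence of a positive solution $(a(t),b(t))$ as a hypothesis, so no further ODE theory is required. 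Everything else is routine.
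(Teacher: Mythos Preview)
Your proof is correct and shares the same key input as the paper's: both invoke Lemma \ref{lem_ab} (with $D=0$ from the symmetry $\Omega(\theta)=\Omega(-\theta)$) to conclude that $\bu=-\nabla W*\rho_{a,b}$ is the diagonal linear field $\bx\mapsto\big(2(A-1)x_1,\,2(B-1)x_2\big)$ on $\supp\rho_{a,b}$. The difference is in how the continuity equation is then verified. The paper takes the Eulerian route: it writes out $\partial_t\rho$, $\rho\,\nabla\cdot\bu$, and $\bu\cdot\nabla\rho$ explicitly using $\rho_{a,b}(\bx)=\tfrac{1}{ab}\rho_2(x_1/a,x_2/b)$ and checks they sum to zero under \eqref{ab_ode}. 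You take the Lagrangian route: you identify $\Phi_t=\mathrm{diag}(a(t)/a_0,\,b(t)/b_0)$ as the flow of $\bu_{a(t),b(t)}$ and appeal to the pushforward characterization of weak solutions. Your packaging is arguably cleaner---it avoids differentiating $\rho_2$ and makes the weak-solution interpretation explicit---and your attention to the $C^1$ regularity of $W*\rho_{a,b}$ (via $|\bx|^{-1-s}\in L^1_{\mathrm{loc}}$ for $0<s<1$, convolved against the bounded compactly supported density $\rho_{a,b}$) is a point the paper leaves implicit. Either way, once the velocity is known to be linear, the remaining verification is routine.
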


The global wellposedness of \eqref{ab_ode} for initial condition $(a_0,b_0)\in (0,\infty)^2$ follows from the uniform upper and lower bounds on $(a(t),b(t))$, as will be shown later in the proof of Theorem \ref{thm_odeconv}.

\begin{proof}
Lemma \ref{lem_ab} shows that
\begin{equation}
    W*\rho_{a,b} = (1-A)x_1^2 + (1-B)x_2^2 + \text{constant},\quad \bx\in \supp\rho_{a,b}.
\end{equation}
Therefore, we get
\begin{equation}
    -\nabla W*\rho_{a,b} = 2\begin{pmatrix}(A-1)x_1 \\ (B-1)x_2\end{pmatrix},\quad \bx\in \supp\rho_{a,b}.
\end{equation}
If $\rho(t,\cdot)=\rho_{a(t),b(t)}$ with $a(t)$ and $b(t)$ satisfying \eqref{ab_ode}, then we compute each term in the continuity equation to obtain
\begin{equation}\begin{split}
    \partial_t\rho = & \partial_t \Big(\frac{1}{ab} \rho_2\Big(\frac{x_1}{a},\frac{x_2}{b}\Big)\Big)\\ 
    = & \Big(-\frac{a'}{a^2b}-\frac{b'}{ab^2}\Big) \rho_2\Big(\frac{x_1}{a},\frac{x_2}{b}\Big) + \frac{1}{ab} \partial_{x_1}\rho_2\Big(\frac{x_1}{a},\frac{x_2}{b}\Big)\Big(-\frac{x_1 a'}{a^2}\Big)\\ &\qquad \qquad\qquad \qquad \qquad \qquad\,+ \frac{1}{ab} \partial_{x_2}\rho_2\Big(\frac{x_1}{a},\frac{x_2}{b}\Big)\Big(-\frac{x_2 b'}{b^2}\Big)\,,
\end{split}\end{equation}
\begin{equation}
    \rho \nabla\cdot \bu = \frac{1}{ab} \rho_2\Big(\frac{x_1}{a},\frac{x_2}{b}\Big)  \big(2(A-1)+2(B-1)\big)\,,
\end{equation}
and
\begin{equation}\begin{split}
    \bu\cdot\nabla\rho  = & 2(A-1)x_1\partial_{x_1}\Big(\frac{1}{ab} \rho_2\Big(\frac{x_1}{a},\frac{x_2}{b}\Big) \Big) + 2(B-1)x_2\partial_{x_2}\Big(\frac{1}{ab} \rho_2\Big(\frac{x_1}{a},\frac{x_2}{b}\Big) \Big) \\
     = & 2(A-1)x_1\frac{1}{ab} \partial_{x_1}\rho_2\Big(\frac{x_1}{a},\frac{x_2}{b}\Big)  \frac{1}{a} + 2(B-1)x_2\frac{1}{ab} \partial_{x_2}\rho_2\Big(\frac{x_1}{a},\frac{x_2}{b}\Big)  \frac{1}{b}\,. \\
\end{split}\end{equation}
Summing these quantities and using \eqref{ab_ode}, we see that \eqref{flow} holds in $\supp\rho(t,\cdot)$.
\end{proof}

Then we analyze the long time behavior of \eqref{ab_ode}. We define the energy functional
\begin{equation}
    \cE(a,b) = \cM(a,b) + \frac{s}{2} (a^2+b^2)
\end{equation}
with
\begin{equation}\label{M}
    \cM(a,b) = a^2A + b^2B = \tau_s (R_1/R_2)^{2+s}\int_{-\pi}^\pi (a^2\cos^2\varphi + b^2\sin^2\varphi)^{-s/2} \tilde{\Omega}(\varphi)\rd{\varphi}.
\end{equation}
$\cE(a,b)$ is a constant multiple of the original total energy \eqref{E}, see Remark \ref{rem_cE} for details. 

\begin{theorem}\label{thm_odeconv}
Assume $0<s<1$ and $W$ is given by \eqref{W} with $\Omega$ satisfying {\bf (H)} and $\tilde{\Omega}\ge c > 0$. Let $(a(t),b(t))$ solves \eqref{ab_ode} with initial condition $(a_0,b_0)\in (0,\infty)^2$. Then we have the exponential convergence in energy
\begin{equation}
   \cE(t) - \cE(a_\infty,b_\infty) \le Ce^{-\lambda t}
\end{equation}
for some $C,\lambda>0$, where $(a_\infty,b_\infty)$ denotes the unique solution to $A=B=1$. Also, $(a(t),b(t))\rightarrow (a_\infty,b_\infty)$ as $t\rightarrow\infty$.
\end{theorem}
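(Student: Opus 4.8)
The plan is to exhibit $\cE$ as a strict Lyapunov functional for \eqref{ab_ode}, deduce that the (unique) solution stays in a compact subset of $(0,\infty)^2$ for all time, identify the limit via LaSalle's invariance principle, and finally upgrade to an exponential rate by linearising at the equilibrium. The starting point is the gradient structure: differentiating \eqref{M} under the integral sign gives $\partial_a\cM=-sa\,A(a,b)$ and $\partial_b\cM=-sb\,B(a,b)$, hence $\nabla\cE=\big(sa(1-A),\,sb(1-B)\big)$, so that \eqref{ab_ode} is exactly $\dot z=-\tfrac{2}{s}\nabla\cE(z)$ with $z=(a,b)$. Consequently
\[
\frac{\rd}{\rd t}\,\cE(a(t),b(t))=-\frac{2}{s}\,|\nabla\cE|^2=-2s\big(a^2(A-1)^2+b^2(B-1)^2\big)\le 0,
\]
with equality if and only if $A=B=1$.

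\emph{A priori confinement and global existence.} Since $\tilde\Omega\ge 0$ forces $\cM\ge 0$, we get $\tfrac{s}{2}(a^2+b^2)\le\cE(a(t),b(t))\le\cE(a_0,b_0)$, so $a(t),b(t)\le M:=\sqrt{2\cE(a_0,b_0)/s}$. For a positive lower bound I would argue directly on the ODE: restricting the $\varphi$-integral in the formula \eqref{lem_ab_2} for $A$ to $|\varphi|\le\pi/4$, bounding $a^2\cos^2\varphi+b^2\sin^2\varphi\le a^2+M^2\varphi^2$ and rescaling $\varphi=(a/M)t$, one obtains $A(a,b)\ge c'\,a^{-(1+s)}$ uniformly for $b\in[0,M]$, the constant $c'>0$ coming from $\tilde\Omega\ge c>0$ and the convergence of $\int_{\mathbb R}(1+t^2)^{-(2+s)/2}\rd t$ (this is where $0<s<1$ enters). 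Hence $A(a,b)>1$ whenever $a<\delta_1$ for some $\delta_1>0$, so $a'(t)=2(A-1)a>0$ whenever $a(t)<\delta_1$, which prevents $a$ from ever dropping below $\min\{a_0,\delta_1\}$; symmetrically for $b$. Thus $(a(t),b(t))$ remains in a fixed compact $K\Subset(0,\infty)^2$, and since the right-hand side of \eqref{ab_ode} is smooth there, the solution is global in time.

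\emph{Convergence and exponential rate.} With the orbit trapped in $K$ and $\cE$ a strict Lyapunov functional, LaSalle's invariance principle shows that the $\omega$-limit set of the orbit is contained in $\{A=B=1\}\cap K$, which by the uniqueness established in STEP~1 of Lemma \ref{lem_abf} is the single point $(a_\infty,b_\infty)$; hence $(a(t),b(t))\to(a_\infty,b_\infty)$. Differentiating $\nabla\cE$ once more and evaluating at $A=B=1$, one finds $\mathrm{Hess}\,\cE(a_\infty,b_\infty)=-s\,\mathrm{diag}(a_\infty,b_\infty)\,\tfrac{\partial(A,B)}{\partial(a,b)}$, which by the explicit formula \eqref{lem_ab_2} is a positive constant multiple of the symmetric matrix pairing the vectors $(a_\infty\cos^2\varphi,\,b_\infty\sin^2\varphi)$ against the positive weight $(a_\infty^2\cos^2\varphi+b_\infty^2\sin^2\varphi)^{-(s+4)/2}\tilde\Omega(\varphi)\rd\varphi$; it is positive definite by a strict Cauchy--Schwarz inequality (equivalently, it is the symmetry-corrected form of the negative-definiteness of $\partial(A,B)/\partial(a,b)$ proved in Lemma \ref{lem_abf}). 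Non-degeneracy of this minimum gives, in a neighbourhood of $(a_\infty,b_\infty)$, both $|\nabla\cE(z)|^2\ge\lambda'\big(\cE(z)-\cE(a_\infty,b_\infty)\big)$ and $\cE(z)-\cE(a_\infty,b_\infty)\ge c_0|z-(a_\infty,b_\infty)|^2$. Combining the first bound with the dissipation identity and the already-established convergence (so the orbit eventually enters this neighbourhood), Gr\"onwall applied to $e(t):=\cE(a(t),b(t))-\cE(a_\infty,b_\infty)$ yields $e(t)\le Ce^{-\lambda t}$ for some $\lambda>0$, and the quadratic lower bound then upgrades this to $|(a(t),b(t))-(a_\infty,b_\infty)|\le Ce^{-\lambda t/2}$.

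The step I expect to be hardest is the a priori \emph{lower} bound on $(a(t),b(t))$ --- i.e., ruling out degeneration $a(t)\to0$ or $b(t)\to0$ in finite or infinite time --- since global well-posedness and the entire LaSalle argument rest on it, and it is precisely there that $0<s<1$ and the strict positivity $\tilde\Omega\ge c>0$ are exploited, through the uniform blow-up $A(a,b)\sim a^{-(1+s)}$ as $a\to0^+$ for bounded $b$. A secondary technical point is the clean verification that $\mathrm{Hess}\,\cE$ is positive definite at the equilibrium, but this reduces to a Cauchy--Schwarz inequality once the gradient identity $\nabla\cE=(sa(1-A),\,sb(1-B))$ is in hand.
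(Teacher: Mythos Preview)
Your proof is correct and follows essentially the same outline as the paper's: both compute the dissipation identity $\cE'=-2s(a^2(A-1)^2+b^2(B-1)^2)$, obtain the upper bound on $(a,b)$ from $\cM\ge0$ and the lower bound from the blow-up $A(a,b)\gtrsim a^{-1-s}$ as $a\to0^+$, and conclude via the definiteness of the Jacobian. The one structural difference is in the endgame: the paper uses the \emph{global} uniform bound $\partial(A,B)/\partial(a,b)\le-\lambda I_2$ on the whole compact $[c_{\textnormal{unif}},C_{\textnormal{unif}}]^2$ to obtain $(a-a_\infty)^2+(b-b_\infty)^2\le C\big((A-1)^2+(B-1)^2\big)$ directly there, which feeds straight into the dissipation identity and Gr\"onwall without any prior convergence step; you instead first invoke LaSalle to get convergence and then linearise locally at the equilibrium. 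The paper's route is a bit shorter since it bypasses LaSalle entirely, but your approach has the virtue of making the gradient-flow structure $\dot z=-\tfrac{2}{s}\nabla\cE(z)$ explicit and working with the (manifestly symmetric) Hessian of $\cE$ rather than the non-symmetric Jacobian $\partial(A,B)/\partial(a,b)$. A minor remark: the convergence of $\int_{\mathbb R}(1+t^2)^{-(2+s)/2}\rd t$ holds for all $s>-1$, so the restriction $0<s<1$ is not specifically used in your lower-bound argument; it is simply the ambient hypothesis inherited from the setup.
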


\begin{proof}
From the integral expression, it is clear that
\begin{equation}
    \partial_a \cM = -s a A,\quad \partial_b \cM = -s b B.
\end{equation}
Therefore, we deduce
\begin{align}\label{dE}
    \cE' &= -s a A a' -s b B b' + s(a a' + b b') \nonumber\\
    &= -s\big(a a'(A-1) + b b'(B-1)\big) = -2s \big(a^2(A-1)^2+b^2(B-1)^2\big),
\end{align}
i.e., $\cE$ is decreasing in time.

Then, from the expression of $\cE$, it is clear that $a(t),b(t)<C_{\textnormal{unif}}$ for some constant $C_{\textnormal{unif}}$ uniformly in time. Also, notice that $A(a,b)\ge c a^{-s-1}$ for $a$ close to 0 if $b$ is bounded from above. In fact, this can be seen from the expression of $A(a,b)$ in \eqref{lem_ab_2}, in which the integrand is well-approximated by $\tilde{\Omega}(0)(a^2+b^2\varphi^2)^{-(2+s)/2}$ near $\varphi=0$ for small $a$. Therefore $a'>0$ for sufficiently small $a$, i.e., $a(t)>c_{\textnormal{unif}}$ uniformly in time. Similarly $b(t)>c_{\textnormal{unif}}$ uniformly in time.

In the proof of Lemma \ref{lem_abf}, we have shown that the Jacobian $\frac{\partial(A,B)}{\partial(a,b)}$ is negative-definite. Since the eigenvalues of $\frac{\partial(A,B)}{\partial(a,b)}$ are continuous functions of $a,b$, there exists $\lambda>0$ such that
\begin{equation}
    \frac{\partial(A,B)}{\partial(a,b)}\le -\lambda I_2,\quad \forall(a,b)\in [c_{\textnormal{unif}},C_{\textnormal{unif}}]^2.
\end{equation}
This implies that the map $(a,b)\mapsto(A,B)$ is invertible (as a smooth map), and 
\begin{equation}
    (a-a_\infty)^2+(b-b_\infty)^2 \le C \big((A(a,b)-1)^2+(B(a,b)-1)^2 \big),\quad \forall(a,b)\in [c_{\textnormal{unif}},C_{\textnormal{unif}}]^2
\end{equation}
since $A(a_\infty,b_\infty)=B(a_\infty,b_\infty)=1$ by definition. As a consequence, \eqref{dE} gives
\begin{equation}
    \cE' \le -c \big((a-a_\infty)^2+(b-b_\infty)^2\big).
\end{equation}
As a smooth function with a unique minimum $(a_\infty,b_\infty)$, $\cE$ clearly satisfies the estimate
\begin{equation}
    \cE(a,b)-\cE(a_\infty,b_\infty) \le C\big((a-a_\infty)^2+(b-b_\infty)^2\big),\quad \forall(a,b)\in [c_{\textnormal{unif}},C_{\textnormal{unif}}]^2.
\end{equation}
Therefore, we conclude that
\begin{equation}
    \cE' \le -c\big(\cE(a,b)-\cE(a_\infty,b_\infty)\big),
\end{equation}
which gives the exponential convergence in energy as $t\rightarrow\infty$, as well as the convergence of $(a(t),b(t))$ to $(a_\infty,b_\infty)$.
\end{proof}

\begin{remark}\label{rem_flow}
For a general $\Omega$ satisfying {\bf (H)}, one can similarly show that $\rho_{a(t),b(t),\eta(t)}$ solves \eqref{flow} if $(a,b,\eta)$ solves the ODE system
\begin{equation}\label{ab_ode2}\left\{\begin{split}
    & a'(t) = 2\big(A_{\eta(t)}(a(t),b(t))-1\big)a(t) \\
    & b'(t) = 2\big(B_{\eta(t)}(a(t),b(t))-1\big)b(t) \\
    & \eta'(t) = 2D_{\eta(t)}\frac{a(t)^2+b(t)^2}{a(t)^2-b(t)^2}
\end{split}\right.
\end{equation}
where $A_\eta,B_\eta,D_\eta$ are as in Lemma \ref{lem_abf}. We leave to the interested reader to check that $\rho_{a(t),b(t),\eta(t)}$ solves \eqref{flow} by a direct computation as in Theorem \ref{thm_ode}. From Lemma \ref{lem_ab}, it is clear that $\rho_{a,b,\eta},\,a,b>0$ cannot be a steady state for the gradient flow \eqref{flow} unless $A_\eta(a,b)=B_\eta(a,b)=1,\,D_\eta(a,b)=0$. With the further assumption $\tilde{\Omega} \ge c >0$, this would imply that a steady state of the form $\rho_{a,b,\eta}$ has to be the unique energy minimizer. As a consequence, the unique steady state of \eqref{ab_ode2} is given by the parameters $(a,b,\eta)$ for the energy minimizer. The analysis of wellposedness and long time behavior of this ODE system is left as future work.
\end{remark}

\begin{remark}\label{rem_cE}
We show that $\cE$ is a constant multiple of the original total energy \eqref{E}. By Lemma \ref{lem_ab}, we have
\begin{equation}
    (|\bx|^{-s}\Omega(\theta)+Ax_1^2+Bx_2^2) * \rho_{a,b} = V_1(R_1/R_2)^{-2}\cM(a,b) ,\quad \bx\in\supp\rho_{a,b}.
\end{equation}
Therefore, we deduce that
\begin{equation}\begin{split}
    (|\bx|^{-s}\Omega(\theta)+x_1^2+x_2^2) * \rho_{a,b} = & V_1(R_1/R_2)^{-2}\cM(a,b) + (1-A)x_1^2+(1-B)x_2^2\\
    & + \int_{\mathbb{R}^2} \big((1-A)y_1^2+ (1-B)y_2^2\big)\rho_{a,b}(\by)\rd{\by},\quad \bx\in\supp\rho_{a,b}
\end{split}\end{equation}
and
\begin{equation}
    2E[\rho_{a,b}] = V_1(R_1/R_2)^{-2}\cM(a,b) + \int_{\mathbb{R}^2} \big((2-2A)x_1^2+(2-2B)x_2^2 \big)\rho_{a,b}(\bx)\rd{\bx}.
\end{equation}
Using Lemma \ref{lem_lin} and the explicit formula of $\rho_1$ in \eqref{rho1D}, we see that
\begin{equation}
    \int_{\mathbb{R}^2} x_2^2 \,\rho_{a,b}(\bx)\rd{\bx} = \int_{\mathbb{R}} x^2 \frac{R_1}{R_2b}\rho_1\Big(\frac{R_1}{R_2b}x\Big)\rd{x} = (R_2/R_1)^2 C_1 R_1^{4+s}\beta\Big(\frac{3}{2},\frac{s+3}{2}\Big)b^2
\end{equation}
and similarly
\begin{equation}
    \int_{\mathbb{R}^2} x_1^2\, \rho_{a,b}(\bx)\rd{\bx} = (R_2/R_1)^2 C_1 R_1^{4+s}\beta\Big(\frac{3}{2},\frac{s+3}{2}\Big)a^2.
\end{equation}
Therefore, combined with \eqref{M}, we get
\begin{equation}
    2E[\rho_{a,b}] = (R_1/R_2)^{-2}\Big((V_1-2\tilde{C}_1)\cM(a,b) + 2\tilde{C}_1 (a^2+b^2) \Big)
\end{equation}
where $\tilde{C}_1 = C_1 R_1^{4+s}\beta(\frac{3}{2},\frac{s+3}{2}) = \frac{1}{4+s}R_1^2$. Using the expression \eqref{calcV1} for $V_1$, one can see that $\frac{V_1}{\tilde{C_1}} = \frac{4+2s}{s}$. Therefore we see that $E[\rho_{a,b}] = (R_1/R_2)^{-2}V_1\frac{1}{2+s} \cE$. This is another way to check that $\cE$ is decreasing in time along the flow of \eqref{flow}.
\end{remark}

%%%%%%%%%%%%%%%%%%%%%%%%%%%%%%%%%%%%%%%%%%%%%%%%%%%%%%

\section{Behavior of minimizers for large $\alpha$}
\label{sec:largealpha}

In this section we discuss the behavior of minimizers for the potential $W_\alpha$ given by \eqref{Walpha}, for $\alpha$ large. We will focus on the case when $\omega(\theta)$ achieves its minimal value at the only point $\omega(\frac{\pi}{2})=0$, and thus intuitively minimizers tend to concentrate along the $x_2$-axis.

\subsection{$\rho_{\textnormal{1D}}$ is the minimizer of strongly coercive potentials}

\begin{theorem}\label{thm_coer}
For $0<s<1$, there exists a constant $C_*=C_*(s)$ such that the following holds. Let $W$ be given by \eqref{W} with $\Omega$ satisfying {\bf (H)}, $\Omega(\frac{\pi}{2})=1$ and 
\begin{equation}
    \Omega(\theta)\ge 1+C_*\big|\theta-\frac{\pi}{2}\big|^2,\quad \forall\theta\in [0,\pi]. 
\end{equation}
Then  $\rho_{\textnormal{1D}}$ is the unique minimizer of $E$ (up to translation).

Assume $W_\alpha$ is given by \eqref{Walpha} with $\omega$ satisfying {\bf (h)} and 
\begin{equation}\label{thm_coer_1}
    \omega(\theta)\ge c_\omega\big|\theta-\frac{\pi}{2}\big|^2,\quad \forall\theta\in [0,\pi].
\end{equation}
Then there exists a unique $0<\alpha_*\le C_*/c_\omega$ (depending on $s$ and $\omega$), such that for any $\alpha > \alpha_*$, $\rho_{\textnormal{1D}}$ is the unique minimizer of $E_\alpha$ (up to translation), and for any $\alpha<\alpha_*$, $\rho_{\textnormal{1D}}$ is not a minimizer of $E_\alpha$.
\end{theorem}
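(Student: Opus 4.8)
I would prove this by a comparison argument, in the scheme of \cite{CMMRSV}: produce one auxiliary potential $W_0(\bx)=|\bx|^{-s}\Omega_0(\theta)+|\bx|^2$, depending only on $s$, such that (a) $W_0$ has the LIC property, (b) $\rho_{\textnormal{1D}}$ is the unique global minimizer of the energy for $W_0$, (c) $\Omega_0(\frac{\pi}{2})=1$, and (d) $\Omega_0(\theta)\le 1+C_*|\theta-\frac{\pi}{2}|^2$ on $[0,\pi]$ for a suitable $C_*=C_*(s)$; this is the content of Lemma \ref{lem_coer}. Granting it, the hypotheses on $\Omega$ give $\Omega(\theta)\ge 1+C_*|\theta-\frac{\pi}{2}|^2\ge \Omega_0(\theta)$ on $[0,\pi]$, hence $\Omega\ge\Omega_0$ everywhere by $\pi$-periodicity, with equality at the vertical angles $\pm\frac{\pi}{2}$ where both equal $1$. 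Since $W-W_0=|\bx|^{-s}(\Omega-\Omega_0)\ge 0$, for any probability measure $\rho$ we get $E[\rho]\ge E_0[\rho]\ge E_0[\rho_{\textnormal{1D}}]$, while $E[\rho_{\textnormal{1D}}]=E_0[\rho_{\textnormal{1D}}]$ because $\bx-\by$ is vertical for $\rho_{\textnormal{1D}}\otimes\rho_{\textnormal{1D}}$-a.e. pair. Thus $\rho_{\textnormal{1D}}$ minimizes $E$, and equality throughout this chain forces any other minimizer $\rho$ to minimize $E_0$ too, so $\rho=\rho_{\textnormal{1D}}$ up to translation by the LIC property of $W_0$ (Lemma \ref{lem_EL}). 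This settles the first assertion.

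\textbf{Construction of $W_0$ (Lemma \ref{lem_coer}).} I would define $\Omega_0$ through its Fourier transform via Lemma \ref{lem_FT}: fix a smooth, nonnegative, even, $\pi$-periodic function $\tilde\Omega_0$ on $S^1$, not identically zero, supported in an open set $U$ with $\overline{U}\subset\{\varphi:|\cos\varphi|<|\sin\varphi|\}$ (a neighbourhood of $\pm\frac{\pi}{2}$ bounded away from $0$ and $\pm\pi$), normalized by $\tau_s\int_{-\pi}^{\pi}|\sin\varphi|^{-s}\tilde\Omega_0(\varphi)\rd\varphi=1$, and set $\Omega_0(\theta)=\tau_s\int_{-\pi}^{\pi}|\cos(\theta-\varphi)|^{-s}\tilde\Omega_0(\varphi)\rd\varphi$ as in \eqref{lem_FT_2r}. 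Then $\Omega_0$ is smooth, strictly positive and $\pi$-periodic, so {\bf (H)} holds, and $W_0$ is LIC by Theorem \ref{thm_LICequiv}; evaluating at $\theta=\frac{\pi}{2}$ gives $\Omega_0(\frac{\pi}{2})=1$. Taking $b_0=R_1/R_2$ so that $\rho_{0,b_0}=\rho_{\textnormal{1D}}$, the coefficients in \eqref{lem_ab_2} at $(0,b_0)$ satisfy $B=1$, $D=0$ (odd integrand, finite since $\supp\tilde\Omega_0$ avoids $\{\sin\varphi=0\}$), and $0\le A<1$ because the $A$- and $B$-integrands differ only through the factors $\cos^2\varphi$ versus $\sin^2\varphi$ and $\cos^2\varphi<\sin^2\varphi$ on $\supp\tilde\Omega_0\subseteq U$. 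Lemma \ref{lem_ab} then says $(|\bx|^{-s}\Omega_0+Ax_1^2+x_2^2)*\rho_{\textnormal{1D}}$ attains its minimum on $\supp\rho_{\textnormal{1D}}$; adding $(1-A)x_1^2\ge 0$, which vanishes on the $x_2$-axis, shows $W_0*\rho_{\textnormal{1D}}$ does as well, i.e. $\rho_{\textnormal{1D}}$ satisfies \eqref{EL} for $W_0$. Since $W_0$ is LIC and a compactly supported minimizer of $E_0$ exists (Appendix \ref{app_exist}), Lemma \ref{lem_EL} identifies $\rho_{\textnormal{1D}}$ as the unique one up to translation. Finally, evenness of $\tilde\Omega_0$ gives $\Omega_0(\pi-\theta)=\Omega_0(\theta)$, so $\Omega_0'(\frac{\pi}{2})=0$ and $(\Omega_0(\theta)-1)/(\theta-\frac{\pi}{2})^2$ extends continuously to the compact interval $[0,\pi]$; any upper bound for it serves as $C_*=C_*(s)$.

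\textbf{The threshold $\alpha_*$.} For $W_\alpha$ in \eqref{Walpha} with $\omega$ as in \eqref{thm_coer_1}, writing $\Omega=1+\alpha\omega$ gives $\Omega(\frac{\pi}{2})=1$ and $\Omega(\theta)\ge 1+\alpha c_\omega|\theta-\frac{\pi}{2}|^2$ on $[0,\pi]$, so the first assertion applies whenever $\alpha c_\omega\ge C_*$. Let $S=\{\alpha\ge 0:\rho_{\textnormal{1D}}\text{ is a global minimizer of }E_\alpha\}$. Then $[C_*/c_\omega,\infty)\subseteq S$; $S$ is an up-set, since for $\alpha'>\alpha\in S$ one has $W_{\alpha'}=W_\alpha+(\alpha'-\alpha)|\bx|^{-s}\omega\ge W_\alpha$ with equality at $\pm\frac{\pi}{2}$, so the comparison above yields $\alpha'\in S$; $S$ is closed, because $\alpha\mapsto E_\alpha[\rho]$ is affine in $\alpha$ for each fixed compactly supported finite-energy $\rho$, so one passes to the limit in $E_{\alpha_n}[\rho]\ge E_{\alpha_n}[\rho_{\textnormal{1D}}]$ and then invokes Appendix \ref{app_exist} to conclude; and $0\notin S$, since at $\alpha=0$ the unique minimizer is the absolutely continuous profile $\rho_2$ of \eqref{rho2}. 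Hence $\alpha_*:=\inf S$ satisfies $0<\alpha_*\le C_*/c_\omega$ and $S=[\alpha_*,\infty)$. For $\alpha>\alpha_*$, pick $\alpha_1\in(\alpha_*,\alpha)$ and set $I(\rho)=\iint|\bx-\by|^{-s}\omega(\theta(\bx-\by))\rho(\by)\rd\by\,\rho(\bx)\rd\bx\ge 0$; for any minimizer $\rho$ of $E_\alpha$, the relations $E_{\alpha_1}[\rho]\ge E_{\alpha_1}[\rho_{\textnormal{1D}}]$, $E_\alpha[\rho]=E_{\alpha_1}[\rho]+\frac{\alpha-\alpha_1}{2}I(\rho)$ and $E_\alpha[\rho]\le E_\alpha[\rho_{\textnormal{1D}}]=E_{\alpha_1}[\rho_{\textnormal{1D}}]$ together force $I(\rho)=0$. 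Since \eqref{thm_coer_1} makes $\omega$ vanish only at $\pm\frac{\pi}{2}$, this means $\rho$ is supported on a single vertical line; after a translation $\rho=\rho^{(1)}(x_2)\delta(x_1)$, and $E_\alpha[\rho]$ then equals the one-dimensional interaction energy of $\rho^{(1)}$ for the potential $|x|^{-s}+|x|^2$, which by $E_\alpha[\rho]\le E_\alpha[\rho_{\textnormal{1D}}]$ is minimal, so $\rho^{(1)}=\rho_1$ and $\rho=\rho_{\textnormal{1D}}$ up to translation. For $\alpha<\alpha_*$, $\alpha\notin S$ directly gives that $\rho_{\textnormal{1D}}$ is not a minimizer; uniqueness of $\alpha_*$ is immediate from these two properties.

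\textbf{Main difficulty.} The one genuinely delicate step is Lemma \ref{lem_coer}: one must find $\tilde\Omega_0\ge 0$ (needed for LIC) for which, simultaneously, $0\le A(0,R_1/R_2)\le 1$, so that $\rho_{\textnormal{1D}}$ obeys the Euler--Lagrange inequality \eqref{EL} for $W_0$. The naive choice $\tilde\Omega_0=\sin^2\varphi$ gives $A=\frac{2-s}{1-s}>1$ and fails; concentrating $\tilde\Omega_0$ near $\pm\frac{\pi}{2}$ fixes this and also makes the $A$-integral trivially finite. Once this choice is made, the requirement $\Omega_0\le 1+C_*|\theta-\frac{\pi}{2}|^2$ costs nothing: with $s$ fixed, $\Omega_0$ is a fixed smooth function with $\Omega_0(\frac{\pi}{2})=1$ and $\Omega_0'(\frac{\pi}{2})=0$, so $C_*$ is merely a Taylor/compactness constant. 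The comparison chain, the up-set and closedness structure of $S$, and the reduction of a vertically supported minimizer to the one-dimensional problem are all routine.
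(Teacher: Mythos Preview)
Your proposal is correct and follows essentially the same approach as the paper: construct a comparison potential $W_0$ via Lemma~\ref{lem_coer}, run the energy comparison chain $E[\rho]\ge E_0[\rho]\ge E_0[\rho_{\textnormal{1D}}]=E[\rho_{\textnormal{1D}}]$, and then handle the threshold $\alpha_*$ via monotonicity in $\alpha$. The only minor differences are cosmetic: your support condition $|\cos\varphi|<|\sin\varphi|$ for $\tilde\Omega_0$ gives $A<B=1$ directly (the paper instead concentrates $\tilde\Omega_*$ near $\pm\pi/2$ to make $A$ small), and for uniqueness in the first part you invoke LIC of $W_0$ whereas the paper argues via the strict inequality $\Omega>\Omega_*$ off $\pm\pi/2$---both routes are valid.
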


\begin{remark}
The non-degeneracy condition \eqref{thm_coer_1} is satisfied if $\omega(\theta)>0$ for any $\theta\in[0,\pi]\backslash\{\frac{\pi}{2}\}$ and $\omega''(\frac{\pi}{2})>0$. We point out that finding an exact formula for $\alpha_*$ or approximating it numerically seems to be very hard.
\end{remark}

To prove the theorem, we need the following construction which enables a comparison argument.
\begin{lemma}\label{lem_coer}
For any fixed $0<s<1$, there exists $\Omega_*$ satisfying {\bf (H)}, with $\Omega_*(\frac{\pi}{2})=1$, $\Omega_*\ge 1$ such that the associated potential $W_*(\bx)=|\bx|^{-s}\Omega_*(\theta)+|\bx|^2$ satisfies $\hat{W}_*(\xi)\ge 0$ for any $\xi\ne 0$ and $W_**\rho_{\textnormal{1D}}$ achieves minimum on $\supp\rho_{\textnormal{1D}}$.
\end{lemma}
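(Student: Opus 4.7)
The required conditions on $\Omega_*$ are: {\bf (H)}; $\Omega_*(\pi/2)=1$; $\Omega_*\ge 1$; $\tilde{\Omega}_*\ge 0$ (equivalent by Theorem \ref{thm_LICequiv} to $\hat W_*\ge 0$ for $\xi\ne 0$); and the Euler--Lagrange inequality that $W_**\rho_{\textnormal{1D}}$ achieves its minimum on $\supp\rho_{\textnormal{1D}}$. My plan is to first reduce this last condition to a single scalar inequality on $\tilde{\Omega}_*$, and then specify $\tilde{\Omega}_*$ directly as a smooth nonnegative profile concentrated near $\varphi=\pm\pi/2$ so that all conditions can be verified simultaneously.

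\textbf{Reducing the EL condition.} By Corollary \ref{cor_decomp}, $|\bx|^{-s}\Omega_*(\theta)=\tau_s\int|\bx\cdot\vec{e}_\varphi|^{-s}\tilde{\Omega}_*(\varphi)\rd\varphi$. For each $\varphi$ with $\sin\varphi\ne 0$, a change of variables reduces $(|\bx\cdot\vec{e}_\varphi|^{-s}*\rho_{\textnormal{1D}})(\bx)$ to $|\sin\varphi|^{-s}(|\cdot|^{-s}*\rho_1)(q_\varphi/\sin\varphi)$ with $q_\varphi=\bx\cdot\vec{e}_\varphi$. The 1D Euler--Lagrange identity $(|y|^{-s}+y^2)*\rho_1=V_1$ on $[-R_1,R_1]$ (strictly larger outside) yields $(|\bx\cdot\vec{e}_\varphi|^{-s}*\rho_{\textnormal{1D}})(\bx)\ge|\sin\varphi|^{-s}[V_1-(q_\varphi/\sin\varphi)^2-M_2]$ with $M_2=\int y^2\rho_1(y)\rd y$, with equality whenever $|q_\varphi/\sin\varphi|\le R_1$. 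Integrating against $\tau_s\tilde{\Omega}_*\rd\varphi$, using the identity $\Omega_*(\pi/2)=\tau_s\int|\sin\varphi|^{-s}\tilde{\Omega}_*\rd\varphi$ from \eqref{lem_FT_2r}, and adding $|\bx|^2*\rho_{\textnormal{1D}}=x_1^2+x_2^2+M_2$, one obtains
\begin{equation*}
(W_**\rho_{\textnormal{1D}})(\bx)\ge V_1+(1-\cA)x_1^2,\qquad \cA:=\tau_s\int_{-\pi}^{\pi}\cos^2\varphi\,|\sin\varphi|^{-s-2}\tilde{\Omega}_*(\varphi)\rd\varphi,
\end{equation*}
with equality on $\supp\rho_{\textnormal{1D}}=\{0\}\times[-R_1,R_1]$ whenever $\Omega_*(\pi/2)=1$. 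The EL condition is therefore equivalent to $\cA$ being finite and $\cA\le 1$.

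\textbf{Construction.} Fix an even nonnegative bump $\psi\in C_c^\infty((-1,1))$ with $\int\psi=1$ and a small parameter $\epsilon>0$, and set
\begin{equation*}
\tilde{\Omega}_*(\varphi)=c_\epsilon\bigl(\epsilon^{-1}\psi((\varphi-\pi/2)/\epsilon)+\epsilon^{-1}\psi((\varphi+\pi/2)/\epsilon)\bigr),
\end{equation*}
extended $\pi$-periodically, with $c_\epsilon$ determined by $\tau_s\int|\sin\varphi|^{-s}\tilde{\Omega}_*\rd\varphi=1$ (so $c_\epsilon\to(2\tau_s)^{-1}$ as $\epsilon\to 0$). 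Define $\Omega_*$ by the inverse Fourier identity \eqref{lem_FT_2r}; the symmetries of $\tilde{\Omega}_*$ under $\varphi\mapsto-\varphi$ and $\varphi\mapsto\pi-\varphi$ imply that $\Omega_*$ is smooth, strictly positive, even, $\pi$-periodic, and $\Omega_*(\pi/2)=1$ automatically. Since $\tilde{\Omega}_*\equiv 0$ in a neighborhood of $\varphi=0,\pi$, the integral defining $\cA$ is absolutely convergent; and since $\cos^2\varphi=O(\epsilon^2)$ on $\supp\tilde{\Omega}_*$, we get $\cA=O(\epsilon^2)\le 1$ for all $\epsilon$ small. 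The LIC positivity $\tilde{\Omega}_*\ge 0$ is built in.

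\textbf{The bound $\Omega_*\ge 1$ and the main obstacle.} The delicate remaining step is to verify $\Omega_*\ge 1$ for $\epsilon$ small. Heuristically, as $\epsilon\to 0$, $\tilde{\Omega}_*\rd\varphi\rightharpoonup(2\tau_s)^{-1}(\delta_{\pi/2}+\delta_{-\pi/2})$, whence $\Omega_*(\theta)\to|\sin\theta|^{-s}\ge 1$ pointwise away from $\theta=0,\pi$, with equality only at $\pm\pi/2$. My plan is to split $S^1$ into three regions: (a) near $\pm\pi/2$, exploit the quantitative local bound coming from $\Omega_*''(\pi/2)=s(1+(s+1)\cA)>0$, obtained by differentiating \eqref{lem_FT_2r} under the integral at $\theta=\pi/2$; this yields $\Omega_*(\theta)\ge 1+\tfrac{s}{2}(\theta-\pi/2)^2+o((\theta-\pi/2)^2)$; (b) on any compact subset of $S^1\setminus\{0,\pi\}$ bounded away from $\pm\pi/2$, use uniform convergence $\Omega_*(\theta)\to|\sin\theta|^{-s}$ (since $|\cos(\theta-\varphi)|^{-s}$ is smooth in $\varphi$ on the support of $\tilde{\Omega}_*$) to obtain a uniform gap $\Omega_*(\theta)\ge 1+c$; (c) near $\theta=0,\pi$, the singularity of $|\cos(\theta-\varphi)|^{-s}$ at $\varphi=\pm\pi/2$ (precisely where $\tilde{\Omega}_*$ concentrates) forces $\Omega_*(\theta)\gg 1$. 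Gluing (a)--(c) by choosing $\epsilon$ small enough depending only on $s$ and $\psi$ completes the construction. The main obstacle is the quantitative matching between regimes (a) and (b) in the transition zone, which requires uniform-in-$\epsilon$ control of the $O((\theta-\pi/2)^3)$ remainder in the local expansion of $\Omega_*$ near $\pi/2$.
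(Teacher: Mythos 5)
Your construction and your reduction of the Euler--Lagrange condition are essentially the ones the paper uses: both take $\tilde\Omega_*$ to be a nonnegative mollifier concentrated at $\varphi=\pm\pi/2$, normalize via \eqref{lem_FT_2r} so that $\Omega_*(\frac{\pi}{2})=1$, and observe that the coefficient $\cA$ of $x_1^2$ (the paper's $A$ from Lemma \ref{lem_ab} with $a=0$) is $O(\epsilon^2)\le 1$, so that $W_**\rho_{\textnormal{1D}}$ attains its minimum on $\supp\rho_{\textnormal{1D}}$. The genuine gap is the lower bound $\Omega_*\ge 1$: you reduce it to a three-region pointwise analysis and then explicitly leave the decisive step (uniform-in-$\epsilon$ control of the cubic remainder near $\pi/2$ and the gluing with the bulk region) unproved, so the conclusion $\Omega_*\ge 1$ --- which is precisely what the comparison argument in Theorem \ref{thm_coer} consumes --- is not established as written. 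For what it is worth, your route is completable: for $\theta$ in a fixed neighborhood of $\pi/2$ and $\varphi\in\supp\tilde\Omega_*$ (within $\epsilon$ of $\pm\pi/2$), the kernel $|\cos(\theta-\varphi)|^{-s}$ is nonsingular with derivatives bounded uniformly in $\epsilon$, so $\Omega_*'''$ is uniformly bounded near $\pi/2$ and the Taylor bound $\Omega_*(\theta)\ge 1+\tfrac{s}{2}(\theta-\tfrac{\pi}{2})^2-C|\theta-\tfrac{\pi}{2}|^3$ closes region (a) on an $\epsilon$-independent interval; but none of this appears in your write-up.

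The paper avoids the pointwise analysis entirely. Once $\tilde\Omega_*\ge 0$ (hence LIC by Theorem \ref{thm_LICequiv}) and the Euler--Lagrange condition hold, Lemma \ref{lem_EL} shows that $\rho_{\textnormal{1D}}$ is the unique global minimizer of $E_*$. Since the energy of a rotation of $\rho_{\textnormal{1D}}$ by an angle $\eta$ depends on $\Omega_*$ only through the single value $\Omega_*(\frac{\pi}{2}+\eta)$ (the quadratic confinement being rotation invariant), the existence of any $\theta_0$ with $\Omega_*(\theta_0)<\Omega_*(\frac{\pi}{2})=1$ would yield a competitor with strictly smaller energy, a contradiction; hence $\Omega_*\ge 1$. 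I recommend replacing your regions (a)--(c) by this one-line rotation argument: it uses only facts you have already established and removes exactly the quantitative matching you identify as the main obstacle.
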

\begin{proof}
Consider $\Omega_*$ satisfying {\bf (H)} with $0\notin\supp\tilde{\Omega}_*$, $\Omega_*(\frac{\pi}{2})=1$ and $\Omega_*(\theta)=\Omega_*(-\theta)$. We apply Lemma \ref{lem_ab} with $a=0,b=R_1/R_2$, which is allowed since $A$ and $B$ take finite values. Since $\rho_1$ is the minimizer of the energy with the 1D potential $|x|^{-s}+|x|^2$, we see that $W_**\rho_{\textnormal{1D}}$ is constant on $\supp\rho_{\textnormal{1D}}$ . This implies $B=1$ in Lemma \ref{lem_ab}. The constraint $\Omega_*(\frac{\pi}{2})=1$ is equivalent to
\begin{equation}
    2\tau_s\int_0^\pi |\sin\varphi|^{-s}\tilde{\Omega}_*(\varphi)\rd{\varphi} = 1
\end{equation}
by \eqref{lem_FT_2r}. With this constraint satisfied,
\begin{equation}
    A = \tau_s %(1/R_2)^{2+s}
    \int_{0}^\pi \cot^2\varphi|\sin\varphi|^{-s}\tilde{\Omega}_*(\varphi)\rd{\varphi}
\end{equation}
can be made arbitrarily small by taking $|\sin\varphi|^{-s}\tilde{\Omega}_*(\varphi)$ as a mollifier concentrated near $\varphi=\pi/2$. In particular, we can make $A<1$. Then Lemma \ref{lem_ab} gives that $(|\bx|^{-s}\Omega_*(\theta) + Ax_1^2+x_2^2)*\rho_{\textnormal{1D}}$ achieves minimum on $\supp\rho_{\textnormal{1D}}$, and the same is clearly also true if $Ax_1^2$ is replaced by $x_1^2$. Finally, since $\tilde{\Omega}_*\ge 0$, we apply Lemma \ref{lem_EL} and Theorem \ref{thm_LICequiv} to see that $\rho_{\textnormal{1D}}$ is the unique minimizer of the associated interaction energy. This implies $\Omega_*\ge \Omega_*(\frac{\pi}{2})=1$ because otherwise a rotated version of $\rho_{\textnormal{1D}}$ would have smaller energy.
\end{proof}

\begin{proof}[Proof of Theorem \ref{thm_coer}]
Let $\Omega_*$ given by Lemma \ref{lem_coer}, and the associated potential and energy $W_*$ and $E_*$. The assumptions on $\Omega_*$ implies the existence of $C_*>0$ such that $1+C_*|\theta-\frac{\pi}{2}|^2 \ge \Omega_*(\theta)$ for any $\theta\in [0,\pi]$, and equality only holds for $\theta=\frac{\pi}{2}$. Also, as seen in the previous proof, $\rho_{\textnormal{1D}}$ is the unique minimizer of $E_*$.

Since $\Omega(\frac{\pi}{2})=1$ and $\Omega(\theta)\ge 1+C_*|\theta-\frac{\pi}{2}|^2$ for $\theta\in [0,\pi]$, for any compactly supported probability measure $\rho$ we have
\begin{equation}\label{Ecomp}
E[\rho] \ge E_*[\rho] \ge E_*[\rho_{\textnormal{1D}}] = E[\rho_{\textnormal{1D}}]
\end{equation}
by $\Omega \ge \Omega_*$, the minimizing property of $\rho_{\textnormal{1D}}$ for $E_*$, and the fact that $E[\rho_{\textnormal{1D}}]$ only involves the values of $W$ with $\theta=\frac{\pi}{2}$, the latter being the same as those in $W_*$. This shows that $\rho_{\textnormal{1D}}$ is a minimizer of $E$. Furthermore, if $\rho$ is not supported on a vertical line, then the first inequality in \eqref{Ecomp} is strict since $\Omega(\theta)>\Omega_*(\theta)$ whenever $\theta\ne\pi/2$. If $\rho$ is supported on a vertical line, then the uniqueness of energy minimizer for the 1D potential $|x|^{-s}+|x|^2$ shows that the second inequality in \eqref{Ecomp} is strict unless $\rho=\rho_{\textnormal{1D}}$. Therefore we conclude that $\rho_{\textnormal{1D}}$ is the unique minimizer of $E$ (up to translation).

For the statement on $W_\alpha$, we notice that the assumptions for $\omega$ implies that $1+\alpha \omega(\theta)$ satisfies the assumptions on $\Omega$ for the previous part for $\alpha=C_*/c_\omega$, and it follows that $\rho_{\textnormal{1D}}$ is the unique minimizer of $E_\alpha$. The same comparison argument also shows that if $\alpha_1<\alpha_2$ and $\rho_{\textnormal{1D}}$ is a minimizer of $E_{\alpha_1}$, then it is the unique minimizer of $E_{\alpha_2}$. 

By Lemma \ref{lem_FT}, $\tilde{\omega}$ is smooth, and thus $\tilde{\Omega}_\alpha=c_s+\alpha \tilde{\omega}\ge c >0$ if $\alpha$ is sufficiently small. In this case Theorem \ref{thm_ell} shows that $\rho_{\textnormal{1D}}$ is not a minimizer of $E_\alpha$. 

Therefore, we define
\begin{equation}
    \alpha_*:= \inf\{\alpha\ge 0: \text{$\rho_{\textnormal{1D}}$ is the unique minimizer of $E_\alpha$}\},
\end{equation}
that is a positive number, with the property that $\rho_{\textnormal{1D}}$ is the unique minimizer of $E_\alpha$ for any $\alpha>\alpha_*$. For any $\alpha<\alpha_*$, $\rho_{\textnormal{1D}}$ cannot be a minimizer of $E_\alpha$, because otherwise we would get that $\rho_{\textnormal{1D}}$ is the unique minimizer of $E_{(\alpha+\alpha_*)/2}$, contradicting the definition of $\alpha_*$. Therefore the desired properties of $\alpha_*$ are proved.

\end{proof}

\begin{remark}
It is clear that $\rho_{\textnormal{1D}}$ is a minimizer of $E_{\alpha_*}$. In fact, for any $\alpha>\alpha_*$, we have $E_\alpha[\rho]\ge E_\alpha[\rho_{\textnormal{1D}}]$ for any probability measure $\rho$. Sending $\alpha\rightarrow\alpha_*$ gives the conclusion. However, it is not clear whether $\rho_{\textnormal{1D}}$ is the unique minimizer of $E_{\alpha_*}$.
\end{remark}

\subsection{Potentials with degeneracy near $\frac{\pi}{2}$}

\begin{theorem}\label{thm_degen}
Let $0<s<1$, $W$ be given by \eqref{W} with $\Omega$ satisfying {\bf (H)}, $\Omega(\frac{\pi}{2})=1$ and \begin{equation}
    \Omega(\theta)\le 1+C\big|\theta-\frac{\pi}{2}\big|^{\kappa}
\end{equation}
for some $\kappa>2$ and any $\theta$ near $\frac{\pi}{2}$. Then $\rho_{\textnormal{1D}}$ is not a $d_\infty$-local minimizer of the associated energy.

In particular, if $W_\alpha$ is given by \eqref{Walpha} with $\omega$ satisfying {\bf (h)} and \begin{equation}
    \omega(\theta)\le C|\theta-\frac{\pi}{2}|^{\kappa}
\end{equation} 
near $\frac{\pi}{2}$, then $\rho_{\textnormal{1D}}$ is not a $d_\infty$-local minimizer of the associated energy $E_\alpha$ for any $\alpha>0$.
\end{theorem}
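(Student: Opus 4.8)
To show that $\rho_{\textnormal{1D}}$ is not a $d_\infty$-local minimizer, the plan is to produce, for every $r>0$, a probability measure within $d_\infty$-distance $r$ of $\rho_{\textnormal{1D}}$ with strictly smaller energy, namely a small-amplitude, short-wavelength \emph{zigzag}. Fix a $1$-Lipschitz, odd, $\delta$-periodic sawtooth profile $g_\delta$ with slopes $\pm1$ (so $\|g_\delta\|_\infty\le\delta$ and $\int g_\delta\rho_1=0$), and for $\epsilon,\delta>0$ set $\Psi_{\epsilon,\delta}(x_1,x_2)=(x_1+\epsilon g_\delta(x_2),x_2)$ and $\rho_{\epsilon,\delta}:=(\Psi_{\epsilon,\delta})_\#\rho_{\textnormal{1D}}$. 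Then $\rho_{\epsilon,\delta}$ is a compactly supported probability measure, with the same center of mass as $\rho_{\textnormal{1D}}$, and $d_\infty(\rho_{\epsilon,\delta},\rho_{\textnormal{1D}})\le\epsilon\|g_\delta\|_\infty\le\epsilon\delta$. It then remains to choose $\delta$ as a small power of $\epsilon$ so that $E[\rho_{\epsilon,\delta}]<E[\rho_{\textnormal{1D}}]$.

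\textbf{Energy difference and pointwise bound.} Since $\rho_{\textnormal{1D}}$ is supported on $\{x_1=0\}$, one has the exact identity
\[
2E[\rho_{\epsilon,\delta}]-2E[\rho_{\textnormal{1D}}]=\iint\Big(W(\epsilon\Delta g,\Delta x_2)-W(0,\Delta x_2)\Big)\rho_1(x_2)\rho_1(y_2)\rd x_2\rd y_2,
\]
with $\Delta g=g_\delta(x_2)-g_\delta(y_2)$ and $\Delta x_2=x_2-y_2$, and I would bound the integrand pointwise. The quadratic part of $W$ contributes exactly $\epsilon^2\iint(\Delta g)^2\rho_1\rho_1=O(\epsilon^2\delta^2)$, using $|\Delta g|\le\min(|\Delta x_2|,2\delta)$. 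For the Riesz part $|\bx|^{-s}\Omega(\theta)$: since $g_\delta$ is $1$-Lipschitz the direction of $(\epsilon\Delta g,\Delta x_2)$ lies within $\arctan\epsilon$ of $\pm\tfrac\pi2$, so the hypothesis $\Omega(\theta)\le1+C|\theta-\tfrac\pi2|^\kappa$ (together with $\Omega(\theta+\pi)=\Omega(\theta)$ and $\Omega(\tfrac\pi2)=1$) makes the relevant values of $\Omega$ at most $1+C\epsilon^\kappa$; combined with the elementary estimate $(\epsilon^2(\Delta g)^2+(\Delta x_2)^2)^{-s/2}\le|\Delta x_2|^{-s}\bigl(1-\tfrac s2\epsilon^2(\Delta g/\Delta x_2)^2+C\epsilon^4\bigr)$, valid because $0\le\epsilon^2(\Delta g/\Delta x_2)^2\le\epsilon^2$, the Riesz integrand is $\le-\tfrac s2\epsilon^2(\Delta g)^2|\Delta x_2|^{-s-2}+C'\epsilon^{\min(4,\kappa)}|\Delta x_2|^{-s}$. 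Integrating, and writing $I_R:=\iint|\Delta x_2|^{-s}\rho_1\rho_1<\infty$ (finite since $\rho_1\in L^\infty$ has compact support and $s<1$),
\[
2E[\rho_{\epsilon,\delta}]-2E[\rho_{\textnormal{1D}}]\le\epsilon^2\iint(\Delta g)^2\Big(1-\tfrac s2|\Delta x_2|^{-s-2}\Big)\rho_1(x_2)\rho_1(y_2)\rd x_2\rd y_2+C'I_R\,\epsilon^{\min(4,\kappa)}.
\]

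\textbf{Key lower bound and conclusion.} The crux is to show that the integral $J(\delta):=\iint(\Delta g)^2\bigl(1-\tfrac s2|\Delta x_2|^{-s-2}\bigr)\rho_1\rho_1$ multiplying $\epsilon^2$ is not merely negative but of order $-\delta^{1-s}$. Its ``$+1$''-part is at most $4\delta^2$. For its ``$-\tfrac s2|\Delta x_2|^{-s-2}$''-part, restrict the integral to pairs $(x_2,y_2)$ lying in a common linear piece of $g_\delta$ (where $(\Delta g)^2=(\Delta x_2)^2$ exactly), with $|\Delta x_2|<\delta/4$ and $x_2,y_2\in[-R_1/2,R_1/2]$ where $\rho_1$ is bounded below; a direct computation (the only bookkeeping being to exclude the $x_2$ within $\delta/4$ of a corner, an $O(\delta)$-measure set) bounds this restricted integral below by $c_0'\delta^{1-s}$. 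Hence $J(\delta)\le4\delta^2-\tfrac s2c_0'\delta^{1-s}\le-\tfrac s4c_0'\delta^{1-s}$ for $\delta$ small. Now choose $\delta=\epsilon^\gamma$ with $0<\gamma<\tfrac{\min(2,\kappa-2)}{1-s}$, which is possible precisely because $\kappa>2$; then $\epsilon^{2+\gamma(1-s)}$ dominates $\epsilon^{\min(4,\kappa)}$, so $2E[\rho_{\epsilon,\delta}]-2E[\rho_{\textnormal{1D}}]<0$ for all sufficiently small $\epsilon$, while $d_\infty(\rho_{\epsilon,\delta},\rho_{\textnormal{1D}})\le\epsilon^{1+\gamma}\to0$. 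This disproves $d_\infty$-local minimality. The statement for $W_\alpha$ then follows by applying the above to $\Omega=1+\alpha\omega$, which for every $\alpha>0$ satisfies the required hypotheses (with constant $\alpha C$) by \textbf{(h)} and $\omega(\theta)\le C|\theta-\tfrac\pi2|^\kappa$.

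\textbf{Main obstacle.} The delicate point—and the reason a plain global tilt $g(x_2)=x_2$ does \emph{not} work—is the virial identity $\iint(\Delta x_2)^2\rho_1\rho_1=\tfrac s2\iint|\Delta x_2|^{-s}\rho_1\rho_1$ for the optimally scaled $1$D minimizer $\rho_1$, which makes $J$ vanish for a global tilt (and in fact renders the first nonzero correction positive when $\kappa$ is large). One is therefore forced to localize the tilt at wavelength $\delta\to0$, so that the repulsive gain stays at order $\epsilon^2\delta^{1-s}$ while the confinement cost ($\epsilon^2\delta^2$) and the anisotropic penalty ($\epsilon^{\min(4,\kappa)}$) become strictly lower order—this is exactly where $\kappa>2$ is used. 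The remaining technical care lies in making the lower bound $J(\delta)\lesssim-\delta^{1-s}$ rigorous near the sawtooth corners and near the endpoints $x_2=\pm R_1$, and in justifying the pointwise integrand estimates uniformly down to the diagonal.
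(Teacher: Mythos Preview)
Your proof is correct and follows essentially the same idea as the paper: perturb $\rho_{\textnormal{1D}}$ by a short-wavelength horizontal oscillation so that the isotropic Riesz part contributes a negative $O(\epsilon^2)$ term that beats the $O(\epsilon^\kappa)$ anisotropic penalty. The paper uses $\epsilon\sin(Mx_2)$ with a \emph{fixed} large integer $M$, so its second-order coefficient $c_M$ is a fixed negative number and one simply sends $\epsilon\to 0$; you use a $1$-Lipschitz sawtooth of wavelength $\delta$ and then couple $\delta=\epsilon^\gamma$. In fact your own bound $J(\delta)\le 4\delta^2-\tfrac{s}{2}c_0'\delta^{1-s}$ is already a fixed negative number once $\delta$ is taken small, so the coupling is unnecessary and you could match the paper's simpler two-step logic (choose $\delta$ once, then $\epsilon\to 0$). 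One small bookkeeping slip: the sawtooth has $\sim R_1/\delta$ corners in $[-R_1/2,R_1/2]$, so the set of $x_2$ within $\delta/4$ of a corner has measure $O(1)$, not $O(\delta)$; the lower bound $\iint_{\text{same piece}}(\Delta g)^2|\Delta x_2|^{-s-2}\rho_1\rho_1\gtrsim\delta^{1-s}$ is nonetheless correct, obtained by summing $\sim 1/\delta$ pieces each contributing $\gtrsim\delta^{2-s}$ rather than by excluding a negligible set. Your remark on the virial identity explaining why a global tilt fails is a nice piece of intuition that the paper does not spell out.
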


\begin{remark}
The last statement complements Theorem \ref{thm_coer}, showing that the assumption \eqref{thm_coer_1} in the latter is sharp in the power.
\end{remark}

\begin{figure}
    \centering
    \includegraphics[width=0.2\textwidth]{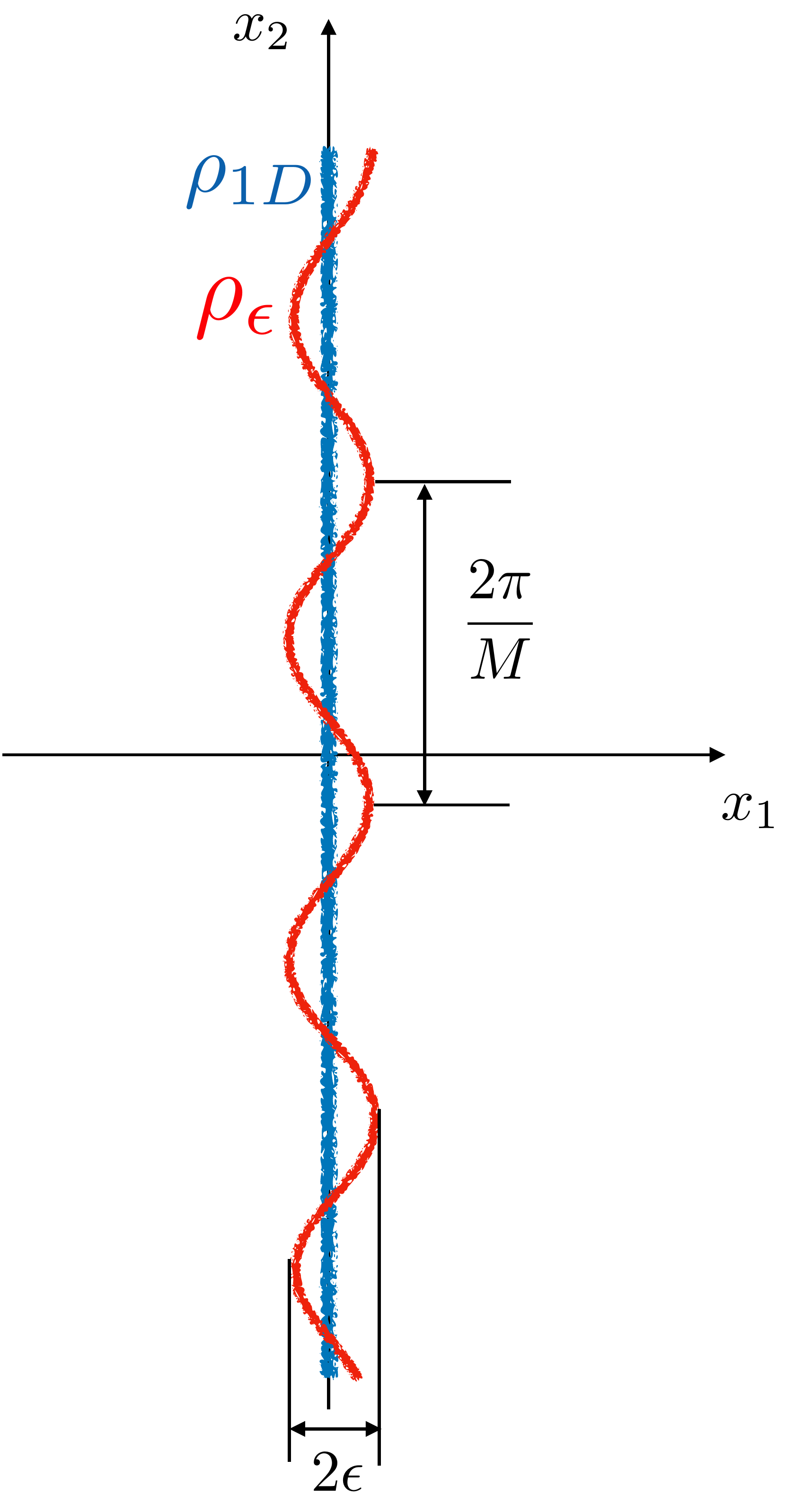}
    \caption{The perturbed distribution $\rho_\epsilon$.}
    \label{fig:perturb}
\end{figure}

\begin{proof}
Let $M$ be a positive integer to be chosen, and consider $\epsilon>0$ small. Define
\begin{equation}
    \rho_\epsilon(\bx) = \rho_1(x_2)\delta(x_1-\epsilon \sin(M x_2))
\end{equation}
as a perturbation of $\rho_{\textnormal{1D}}$. See Figure \ref{fig:perturb} as an illustration. We claim that there holds the asymptotic expansion
\begin{equation}\label{Esin}
    E_{\textnormal{iso}}[\rho_\epsilon] = E_{\textnormal{iso}}[\rho_{\textnormal{1D}}] + c_M\epsilon^2 + O_M(\epsilon^4)
\end{equation}
where $E_{\textnormal{iso}}$ denotes the energy with $\Omega=1$ (the isotropic part), and
\begin{equation}\label{cM}\begin{split}
    c_M= &-\frac{s}{4}\int_{\mathbb{R}}\int_{\mathbb{R}}\Big|\frac{\sin(M x)-\sin(M y)}{x-y}\Big|^2\, |x-y|^{-s}\rho_1(y)\rd{y}\,\rho_1(x)\rd{x} \\ & + \frac{1}{2}\int_{\mathbb{R}}\int_{\mathbb{R}}|\sin (M x)-\sin(M y)|^2\rho_1(y)\rd{y}\,\rho_1(x)\rd{x}.
\end{split}\end{equation}
To prove \eqref{Esin}, we compute the difference of the isotropic energies
\begin{equation}\begin{split}
& E_{\textnormal{iso}}[\rho_\epsilon]-E_{\textnormal{iso}}[\rho_{\text{1D}}] 
\\ = & \frac{1}{2}\int_{\mathbb{R}}\int_{\mathbb{R}} \Big[(|x-y|^2+\epsilon^2|\sin(M x)-\sin(M y)|^2)^{-s/2} \\ 
& -|x-y|^{-s} + \epsilon^2|\sin (M x)-\sin(M y)|^2\Big]\rho_1(y)\rd{y}\rho_1(x)\rd{x} \\
= & \frac{1}{2}\int_{\mathbb{R}}\int_{\mathbb{R}} \Big[\Big(1+\epsilon^2\Big|\frac{\sin(M x)-\sin(M y)}{x-y}\Big|^2\Big)^{-s/2}-1 \Big]|x-y|^{-s}\rho_1(y)\rd{y}\rho_1(x)\rd{x} \\
& +  \frac{\epsilon^2}{2}\int_{\mathbb{R}}\int_{\mathbb{R}} |\sin (M x)-\sin(M y)|^2\rho_1(y)\rd{y}\rho_1(x)\rd{x}\\
\end{split}
\end{equation}
Notice that $|\frac{\sin(M x)-\sin(M y)}{x-y}| \le M$ for any $x\ne y$ by the mean value theorem. Therefore we have the Taylor expansion
\begin{equation}
    \Big(1+\epsilon^2\Big|\frac{\sin(M x)-\sin(M y)}{x-y}\Big|^2\Big)^{-s/2}-1 = -\frac{s}{2}\epsilon^2\Big|\frac{\sin(M x)-\sin(M y)}{x-y}\Big|^2 + O(\epsilon^4)
\end{equation}
where the constant in the last $O(\epsilon^4)$ is independent of $x,y$. Therefore we obtain \eqref{Esin} with the stated coefficient $c_M$.

It is clear that the second integral in \eqref{cM} is no more than 4, and the first integral goes to infinity as $M\rightarrow\infty$ (by observing that $|\frac{\sin(M x)-\sin(M y)}{x-y}| \ge cM$ whenever $|x-y|<c/M$ and $Mx-\pi/2$ is not close to an integer multiple of $\pi$). Therefore, for sufficiently large $M$ (depending only on $s$), we have $c_M<0$.

Next we analyze the energy from the anisotropic part. Denote $\Omega_1 = \Omega-1$, and $E_{\textnormal{ani}}$ as the energy with $\Omega$ replaced by $\Omega-1$. Then $E_{\textnormal{ani}}[\rho_{\textnormal{1D}}]=0$, and
\begin{equation}\begin{split}
E_{\textnormal{ani}}[\rho_\epsilon] 
= & \frac{1}{2}\int_{\mathbb{R}}\int_{\mathbb{R}} (|x-y|^2+\epsilon^2|\sin(M x)-\sin(M y)|^2)^{-s/2} \\
& \qquad\cdot\Omega_1\Big(\tan^{-1}\frac{x-y}{\epsilon(\sin(M x)-\sin(M y))}\Big)\rho_1(y)\rd{y}\,\rho_1(x)\rd{x} \\
\le & \frac{1}{2}\int_{\mathbb{R}}\int_{\mathbb{R}}|x-y|^{-s}\Omega_{1,+}\Big(\tan^{-1}\frac{x-y}{\epsilon(\sin(M x)-\sin(M y))}\Big)\rho_1(y)\rd{y}\,\rho_1(x)\rd{x}
\end{split}
\end{equation}
where $\Omega_{1,+}(\theta):=\max\{\Omega_1(\theta),0\}$. 

\sloppy Since $|\frac{x-y}{\epsilon(\sin(M x)-\sin(M y))}|\ge \frac{1}{M\epsilon}$, we see that either $|\tan^{-1}\frac{x-y}{\epsilon(\sin(M x)-\sin(M y))}-\frac{\pi}{2}|<C\epsilon$ or $|\tan^{-1}\frac{x-y}{\epsilon(\sin(M x)-\sin(M y))}+\frac{\pi}{2}|<C\epsilon$ for small $\epsilon$. Using the assumption  $\Omega_1(\theta)\le C|\theta-\frac{\pi}{2}|^{\kappa}$ and the property of $\Omega_1(\theta)=\Omega_1(\theta+\pi)$, we see that
\begin{equation}\begin{split}
E_{\textnormal{ani}}[\rho_\epsilon] 
\le & C\epsilon^{\kappa}\int_{\mathbb{R}}\int_{\mathbb{R}}|x-y|^{-s}\rho_1(y)\rd{y}\,\rho_1(x)\rd{x} = C\epsilon^{\kappa}.
\end{split}
\end{equation}
Therefore, we conclude
\begin{equation}
    E[\rho_\epsilon]-E[\rho_{\textnormal{1D}}] \le c_M\epsilon^2 + C\epsilon^{\kappa} + O(\epsilon^4)
\end{equation}
with $c_M<0$, which implies that $E[\rho_\epsilon]<E[\rho_{\textnormal{1D}}]$ for all $\epsilon$ sufficiently small because $\kappa>2$. Since $d_\infty(\rho_\epsilon,\rho_{\textnormal{1D}})\le \epsilon$, we see that $\rho_{\textnormal{1D}}$ is not a $d_\infty$-local minimizer of $E$.

\end{proof}

\subsection{Estimate of the width of support}

As a complementary result to Theorem \ref{thm_degen}, we will show that any global minimizer has to have narrow support in $x_1$ for large $\alpha$, even if $\omega$ behaves like $|\theta-\frac{\pi}{2}|^\kappa$ with a large $\kappa$.

\begin{theorem}\label{thm_width}
Assume $0<s<1$, $W_\alpha$ given by \eqref{Walpha} with $\omega$ satisfying {\bf (h)} and 
\begin{equation}\label{thm_width_1}
    \omega(\theta) \ge c|\theta-\frac{\pi}{2}|^\kappa,\quad \forall\theta\in [0,\pi]
\end{equation} 
for some $\kappa\ge 2$. Then, for any minimizer $\rho$ of $E_\alpha$ with zero center of mass,
\begin{equation}
\sup_{\bx\in\supp\rho} |x_1| \le C \alpha^{-1/(\kappa+1)}
\end{equation}
with $C$ depending on $s$ and $\omega$.
\end{theorem}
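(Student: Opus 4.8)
The plan is to combine a global energy comparison against $\rho_{\textnormal{1D}}$ with two consequences of the Euler--Lagrange identity for minimizers. Throughout, write $E_\alpha=E_{\textnormal{iso}}+\alpha E_{\textnormal{ani}}$, where $E_{\textnormal{iso}}$ is the energy of the isotropic potential $|\bx|^{-s}+|\bx|^2$ and $E_{\textnormal{ani}}$ that of $|\bx|^{-s}\omega(\theta)$, and recall that any global minimizer satisfies $(W_\alpha*\rho)(\bx)=V:=2E_\alpha[\rho]$ for $\rho$-a.e.\ $\bx$, with $\supp\rho$ compact by Appendix \ref{app_exist}. \emph{Step 1 (energy bounds).} Since $\omega(\tfrac\pi2)=0$ the measure $\rho_{\textnormal{1D}}$ is anisotropy-free, so $E_\alpha[\rho_{\textnormal{1D}}]=E_{\textnormal{iso}}[\rho_{\textnormal{1D}}]$; minimality of $\rho$ and $E_{\textnormal{iso}}\ge E_{\textnormal{iso}}[\rho_2]$ then give $\alpha E_{\textnormal{ani}}[\rho]\le E_{\textnormal{iso}}[\rho_{\textnormal{1D}}]-E_{\textnormal{iso}}[\rho_2]=:C_0(s)$ and $E_\alpha[\rho]\le E_{\textnormal{iso}}[\rho_{\textnormal{1D}}]=:C_1(s)$. \emph{Step 2 (uniform diameter bound).} Because $\Omega=1+\alpha\omega\ge1$ we have $W_\alpha(\bx)\ge|\bx|^2$, and the zero-center-of-mass hypothesis gives $(W_\alpha*\rho)(\bx)\ge\int|\bx-\by|^2\rho(\by)\rd\by=|\bx|^2+\int|\by|^2\rho(\by)\rd\by\ge|\bx|^2$. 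Evaluating at a point where the Euler--Lagrange identity holds yields $|\bx|^2\le V\le2C_1$; since such points are dense in $\supp\rho$, we conclude $\supp\rho\subset\overline{\cB(0;\sqrt{2C_1})}$, hence $\diam\supp\rho\le D_0(s)$, uniformly in $\alpha$.

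\emph{Step 3 (localization of the $x_1$-marginal).} Fix $\bx^*\in\supp\rho$ with $(W_\alpha*\rho)(\bx^*)=V$ and let $h>0$. If $|y_1-x_1^*|\ge h$ then the vector $\bx^*-\by$ has horizontal component of modulus $\ge h$ and length $\le D_0$, so its angular distance $\psi$ to the vertical direction satisfies $\sin\psi\ge h/D_0$; by \eqref{thm_width_1} this forces $|\bx^*-\by|^{-s}\omega(\theta_{\bx^*-\by})\ge D_0^{-s}c(h/D_0)^\kappa$. Integrating this lower bound over $\{|y_1-x_1^*|\ge h\}$ and discarding the remaining nonnegative contributions to $(W_\alpha*\rho)(\bx^*)$ (namely $(|\bx|^{-s}*\rho)(\bx^*)$, $|\bx^*|^2$ and $\int|\by|^2\rho$, all $\ge0$), which are bounded by $V\le2C_1$, yields
\begin{equation*}
\rho\big(\{\by:\,|y_1-x_1^*|\ge h\}\big)\le\frac{C_3}{\alpha h^\kappa},\qquad C_3=C_3(s,\omega).
\end{equation*}

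\emph{Step 4 (conclusion).} Let $L=\sup_{\bx\in\supp\rho}|x_1|$; if $L=0$ there is nothing to prove, so assume $L>0$ and, up to replacing the right edge of $\supp\rho$ by the left edge, that $L$ is approached along points of the Euler--Lagrange equality set with $x_1\to L$. Let $\mu$ be the $x_1$-marginal of $\rho$. Applying Step 3 at such points with $h$ slightly below $L/2$ gives $\mu(\{y_1\le L/2\})\le 2^\kappa C_3'/(\alpha L^\kappa)$ for a constant $C_3'=C_3'(s,\omega)$, while the zero-center-of-mass condition $\int y_1\,\mu(y_1)\rd{y_1}=0$ together with $\supp\mu\subset[-L,L]$ forces $\mu(\{y_1\le L/2\})\ge\tfrac13$ (otherwise $0=\int y_1\rd\mu\ge\tfrac L2\mu(\{y_1>L/2\})-L\mu(\{y_1\le L/2\})>0$). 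Hence $L^\kappa\le 3\cdot2^\kappa C_3'/\alpha$, i.e.\ $L\le C\alpha^{-1/\kappa}$, which in particular gives the asserted $L\le C\alpha^{-1/(\kappa+1)}$ for $\alpha\ge1$, while for bounded $\alpha$ the bound is immediate from Step 2.

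The crux --- and the step most likely to require care --- is Step 3 and its use in Step 4: the anisotropic energy estimate $\alpha E_{\textnormal{ani}}[\rho]\le C_0$ of Step 1 does \emph{not} by itself rule out a minimizer carrying a vanishingly small amount of mass far out in the $x_1$-direction at negligible anisotropic cost, so one is forced to exploit the Euler--Lagrange identity pointwise (which is where the uniform diameter bound of Step 2 enters) and then to play this off against the zero-center-of-mass constraint. Some attention is also needed to the almost-everywhere nature of the Euler--Lagrange identity near $\partial(\supp\rho)$, which is dealt with by working along sequences of equality points converging to the extremal point of the support in the $x_1$-direction.
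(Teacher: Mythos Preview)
Your proof is correct and in fact yields a sharper exponent than the paper's statement: your Step~4 gives $L\le C\alpha^{-1/\kappa}$, which for large $\alpha$ is strictly stronger than $L\le C\alpha^{-1/(\kappa+1)}$.

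The paper takes a different route. Instead of the pointwise Euler--Lagrange identity, it uses an energy competitor argument: assuming $X:=\sup|x_1|>C_1\alpha^{-1/(\kappa+1)}$, it trims off the mass $m_\epsilon$ sitting in $\{x_1>X-\epsilon\}$, renormalizes to obtain a probability measure $\rho_{(2)}$, and shows $E_\alpha[\rho_{(2)}]<E_\alpha[\rho]$. The key gain comes from the cross-interaction between the edge slab and a bulk region $\{x_1<C_1\alpha^{-1/(\kappa+1)}/2\}$, whose mass is bounded below via the zero-mean condition by $\gtrsim C_1\alpha^{-1/(\kappa+1)}$; this is where the extra factor enters and the exponent degrades to $1/(\kappa+1)$. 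Your approach sidesteps this loss by working directly with $(W_\alpha*\rho)(\bx^*)=V\le 2E_{\textnormal{iso}}[\rho_{\textnormal{1D}}]$ at Euler--Lagrange points $\bx^*$ with $x_1^*\to L$, which immediately bounds the $\rho$-mass at horizontal distance $\ge h$ from $\bx^*$ by $C/(\alpha h^\kappa)$; combined with the elementary observation that zero mean forces at least mass $1/3$ on $\{y_1\le L/2\}$, this closes with the better exponent. The competitor argument is more robust in situations where Euler--Lagrange information is unavailable or awkward, but here your direct argument is both shorter and quantitatively superior.
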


\begin{proof}
By Lemma \ref{lem_R}, the conclusion is clearly true for $0\le \alpha \le 1$. In the rest of the proof, we will assume $\alpha>1$. Denote $X=\sup_{\bx\in\supp\rho} |x_1|$. Assume $X>C_1\alpha^{-1/(\kappa+1)}$ with $C_1>0$ to be determined. Let $\bx\in\supp\rho$ with $x_1=X$. Then for any $\epsilon>0$, 
\begin{equation}
m_\epsilon:= \int_{x_1>X-\epsilon}\rho\rd{\bx} >0
\end{equation}
We will always consider $\epsilon<X-C_1\alpha^{-1/(\kappa+1)}$, and $\epsilon$ sufficiently small so that $m_\epsilon < 2/3$ (the latter is possible because $\rho$ has zero center of mass).

Denote 
\begin{equation}
\rho_{(1)} = \rho \chi_{x_1\le X-\epsilon},\quad \rho_{(2)} = \frac{\rho_{(1)}}{\int \rho_{(1)}\rd{\bx}}.
\end{equation}
Then $\rho_{(2)}$ is a probability measure. We aim to show that $E_\alpha[\rho_{(2)}] < E_\alpha[\rho]$ for any $\epsilon$ with the above smallness conditions, which would lead to a contradiction.

First notice that the mean-zero condition for $\rho$ and Lemma \ref{lem_R} imply that
\begin{equation}\label{Rst}
\int_{x_1<C_1\alpha^{-1/(\kappa+1)}/2}\rho\rd{\bx} \ge R_*^{-1} C_1\alpha^{-1/(\kappa+1)},\quad R_* = 4\max\{R,C_1\},
\end{equation}
where $R>0$ is such that $\supp\rho\subset \cB(0;R)$ given by Lemma \ref{lem_R} and depending only on $s$. In fact, if this was not true, then combining with $\alpha>1$, one would have \begin{equation}\begin{split}
    \int_{\mathbb{R}^2} x_1\rho(\bx)\rd{\bx} = & \int_{x_1<C_1\alpha^{-1/(\kappa+1)}/2} x_1\rho(\bx)\rd{\bx} + \int_{x_1\ge C_1\alpha^{-1/(\kappa+1)}/2} x_1\rho(\bx)\rd{\bx} \\
    \ge & -R \cdot R_*^{-1} C_1\alpha^{-1/(\kappa+1)} + C_1\alpha^{-1/(\kappa+1)}/2 \cdot (1-R_*^{-1} C_1\alpha^{-1/(\kappa+1)}) \\
    \ge & C_1\alpha^{-1/(\kappa+1)}\Big(-\frac{1}{4}+\frac{1}{2}\cdot\frac{3}{4}\Big)>0\,,
\end{split}\end{equation}
where we used that $x_1\geq -R$ on the first integral, 
contradicting the mean-zero condition for $\rho$.

Then, since $W_\alpha>0$,
\begin{equation}\begin{split}
E_\alpha[\rho]-E_\alpha[\rho_{(1)}] \ge & \int_{x_1>X-\epsilon}\int_{y_1<C_1\alpha^{-1/(\kappa+1)}/2}W_\alpha(\bx-\by)\rho(\by)\rd{\by}\rho(\bx)\rd{\bx} \\
\ge & c \alpha (C_1\alpha^{-1/(\kappa+1)})^\kappa \int_{x_1>X-\epsilon}\int_{y_1<C_1\alpha^{-1/(\kappa+1)}/2}\rho(\by)\rd{\by}\rho(\bx)\rd{\bx} \\
\ge & c R_*^{-1} C_1^{\kappa+1}m_\epsilon  \\
\end{split}\end{equation}
since in the integrand we always have $x_1-y_1 > C_1\alpha^{-1/(\kappa+1)}/2$ and $|\bx-\by| \le C$ (by Lemma \ref{lem_R}, $C$ independent of $\alpha$) and thus, the angles of $\bx-\by$ corresponding to the integration set are bounded away from $\pi/2$. As a consequence, $W_\alpha(\bx-\by) \ge c \alpha (C_1\alpha^{-1/(\kappa+1)})^\kappa$ by the lower bound assumption \eqref{thm_width_1} on $\omega$.

Then notice that $\int \rho_{(1)}\rd{\bx} = 1-m_\epsilon$. Therefore, using $m_\epsilon<2/3$ and Taylor expansion of $(1-m)^{-2}$ at $m=0$, we get
\begin{equation}\begin{split}
E_\alpha[\rho_{(2)}] = & \frac{1}{(1-m_\epsilon)^2}E_\alpha[\rho_{(1)}] \le (1+Cm_\epsilon) E_\alpha[\rho_{(1)}] \le (1+Cm_\epsilon)(E_\alpha[\rho] - c R_*^{-1} C_1^{\kappa+1} m_\epsilon ) \\
\le & E_\alpha[\rho] + \Big(C E_\alpha[\rho]-c R_*^{-1} C_1^{\kappa+1}\Big) m_\epsilon 
\le  E_\alpha[\rho] + \Big(C E_0[\rho_{\textnormal{1D}}]-c R_*^{-1} C_1^{\kappa+1}\Big) m_\epsilon \\
\end{split}\end{equation}
where the last inequality uses the minimizing property of $\rho$ and the fact that $E_\alpha[\rho_{\textnormal{1D}}]=E_0[\rho_{\textnormal{1D}}]$ is finite for $0<s<1$. Recall that $R_*\le 4C_1$ by \eqref{Rst}. By taking $C_1$ sufficiently large (independent of $\alpha,\epsilon$), we obtain $E[\rho_{(2)}]<E[\rho]$, contradicting the minimizing property of $\rho$.

\end{proof}

%%%%%%%%%%%%%%%%%%%%%%%%%%%%%%%%

\section{Intermediate $\alpha$: possible complex behavior}\label{sec_complex}

In this section we analyze the 2D potential generated by 1D vertical distributions. This enables us to determine whether the vertical one dimensional distribution $\rho_{\textnormal{1D}}$ satisfies the first Euler-Lagrange condition. 

\begin{proposition}\label{prop_expan}
Let $0<s<1$ and $W$ be given by \eqref{W} with $\Omega$ satisfying {\bf (H)}. Let
\begin{equation}\label{prop_expan_1}
\rho(\bx) = \psi(x_2) \delta(x_1)
\end{equation}
where $\psi$ is a nonnegative continuous function on $\mathbb{R}$ with compact support and $\int_{\mathbb{R}}\psi(x)\rd{x}=1$. Assume $\psi$ is $C^1$ near a neighborhood of 0. Then
\begin{equation}\label{prop_expan_2}
(W*\rho)(\epsilon,0)-(W*\rho)(0,0) = \frac{1}{2\tau_{2-s}}\tilde{\Omega}(0)\psi(0)\epsilon^{1-s} + O(\epsilon)
\end{equation}
where $\tau_{2-s}$ is a negative number, given by \eqref{calc4}. When $\Omega$ achieves a local minimum at $\pi/2$, then the error term can be improved to $O(\epsilon^{2-s})$.

In particular, if $\tilde{\Omega}(0)>0$ and $\psi(0)>0$, then $\rho$ is not a $d_\infty$-local minimizer of $E$.
\end{proposition}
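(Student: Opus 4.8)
The plan is to compute $(W*\rho)(\epsilon,0)-(W*\rho)(0,0)$ directly as a one-dimensional integral and to extract its leading term by a rescaling argument. The quadratic part of $W$ contributes exactly $\int_{\mathbb R}(\epsilon^2+x_2^2-x_2^2)\psi(x_2)\rd{x_2}=\epsilon^2$ to the difference, which is within the claimed error in both cases, so it suffices to analyze $W_{\mathrm{rep}}(\bx)=|\bx|^{-s}\Omega(\theta)$. Writing $\theta_\epsilon(x_2)$ for the angle of the vector $(\epsilon,-x_2)$ and using $\Omega(\pm\tfrac\pi2)=\Omega(\tfrac\pi2)$, one has
\[
D(\epsilon):=(W_{\mathrm{rep}}*\rho)(\epsilon,0)-(W_{\mathrm{rep}}*\rho)(0,0)=\int_{\mathbb R}\Big[(\epsilon^2+x_2^2)^{-s/2}\Omega(\theta_\epsilon(x_2))-|x_2|^{-s}\Omega(\tfrac\pi2)\Big]\psi(x_2)\rd{x_2}.
\]
I fix $\delta>0$ small enough that $\psi\in C^1([-\delta,\delta])$, and split $D(\epsilon)$ into the integrals over $\{|x_2|\ge\delta\}$ and $\{|x_2|<\delta\}$.

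On $\{|x_2|\ge\delta\}$ the angle $\theta_\epsilon(x_2)$ lies within $\epsilon/\delta$ of $\pm\tfrac\pi2$, so $|\Omega(\theta_\epsilon(x_2))-\Omega(\tfrac\pi2)|\le C\epsilon$, and $\le C\epsilon^2$ when $\tfrac\pi2$ is a critical point of $\Omega$; combined with $|(\epsilon^2+x_2^2)^{-s/2}-|x_2|^{-s}|\le C\epsilon^2|x_2|^{-s-2}$ and $\int_{\mathbb R}|x_2|^{-s}\psi(x_2)\rd{x_2}<\infty$, the far piece is $O(\epsilon)$ (resp.\ $O(\epsilon^2)$). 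On $\{|x_2|<\delta\}$ I write $\psi(x_2)=\psi(0)+r(x_2)$ with $|r(x_2)|\le C|x_2|$ and substitute $x_2=\epsilon t$. The term carrying $r$ becomes $\epsilon^{2-s}\int_{|t|<\delta/\epsilon}\big[(1+t^2)^{-s/2}\Omega(-\arctan t)-|t|^{-s}\Omega(\tfrac\pi2)\big]\,O(|t|)\rd{t}$; since the bracket is $O(|t|^{-1-s})$ as $|t|\to\infty$ (it is $O(|t|^{-2-s})$ when $\Omega'(\tfrac\pi2)=0$, using $\Omega(\theta+\pi)=\Omega(\theta)$), this term is $O(\epsilon)$ (resp.\ $O(\epsilon^{2-s})$). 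The principal term $\psi(0)\int_{|x_2|<\delta}[\cdots]\rd{x_2}$ equals, after the same substitution, $\psi(0)\,\epsilon^{1-s}\int_{|t|<\delta/\epsilon}\big[(1+t^2)^{-s/2}\Omega(-\arctan t)-|t|^{-s}\Omega(\tfrac\pi2)\big]\rd{t}$, and by the $|t|^{-1-s}$ decay the $t$-integral converges as $\epsilon\to0$ to $I:=\int_{\mathbb R}\big[(1+t^2)^{-s/2}\Omega(-\arctan t)-|t|^{-s}\Omega(\tfrac\pi2)\big]\rd{t}$ with error $O(\epsilon^s)$; hence the principal term is $\psi(0)I\,\epsilon^{1-s}+O(\epsilon)$ (resp.\ $+O(\epsilon^{2-s})$). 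Carrying out these estimates $\epsilon$-uniformly is the main technical obstacle, because the expansion lives at the scale $\epsilon^{1-s}$ while the rescaled integrand decays only like $|t|^{-1-s}$; the $C^1$ hypothesis on $\psi$ near $0$ is exactly what controls the contribution of $r$, and the critical-point hypothesis on $\Omega$ is what upgrades every error to $O(\epsilon^{2-s})$.

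To identify $I=\tfrac1{2\tau_{2-s}}\tilde\Omega(0)$ I would split $I=\int_{\mathbb R}(1+t^2)^{-s/2}\big(\Omega(-\arctan t)-\Omega(\tfrac\pi2)\big)\rd{t}+\Omega(\tfrac\pi2)\int_{\mathbb R}\big((1+t^2)^{-s/2}-|t|^{-s}\big)\rd{t}$. The substitution $t=\tan u$ turns the first integral into $\int_{-\pi/2}^{\pi/2}|\cos u|^{s-2}\big(\Omega(u)-\Omega(\tfrac\pi2)\big)\rd{u}$ (after $u\mapsto-u$), which by the $\pi$-periodicity of $\Omega$ and of $|\cos|$ equals $\tfrac12\int_{-\pi}^{\pi}|\cos\theta|^{s-2}\big(\Omega(\theta)-\Omega(\tfrac\pi2)\big)\rd{\theta}$; comparing with \eqref{lem_FT_3} at $\varphi=0$, and invoking the elementary identity $\int_{\mathbb R}\big((1+t^2)^{-s/2}-|t|^{-s}\big)\rd{t}=c_s/(2\tau_{2-s})$ — a beta-function computation from the explicit constants in Appendix \ref{app:constants} — gives $I=\tfrac1{2\tau_{2-s}}\big(\tau_{2-s}\int_{-\pi}^{\pi}|\cos\theta|^{s-2}(\Omega(\theta)-\Omega(\tfrac\pi2))\rd{\theta}+c_s\Omega(\tfrac\pi2)\big)=\tfrac1{2\tau_{2-s}}\tilde\Omega(0)$, which is \eqref{prop_expan_2}.

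For the final assertion, recall $\tau_{2-s}<0$; if moreover $\tilde\Omega(0)>0$ and $\psi(0)>0$, then the coefficient $\tfrac1{2\tau_{2-s}}\tilde\Omega(0)\psi(0)$ is negative, and since $1-s<1$, \eqref{prop_expan_2} gives $(W*\rho)(\epsilon,0)<(W*\rho)(0,0)$ for all sufficiently small $\epsilon>0$. Now $(0,0)\in\supp\rho$ because $\psi(0)>0$, and the map $x_2\mapsto\int_{\mathbb R}|x_2-y|^{-s}\psi(y)\rd{y}$ is continuous, so $W*\rho$ is continuous along the $x_2$-axis near $0$. If $\rho$ were a $d_\infty$-local minimizer, the necessary Euler--Lagrange condition (cf.\ \eqref{EL}) would force $W*\rho$ to equal its essential infimum over $\supp\rho$ at $\rho$-a.e.\ point of $\supp\rho$ and to be at least that value on a neighborhood of $\supp\rho$; by the continuity just noted this makes $(W*\rho)(0,0)$ that common value, and then $(W*\rho)(\epsilon,0)<(W*\rho)(0,0)$ contradicts the lower bound on the neighborhood. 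Hence $\rho$ is not a $d_\infty$-local minimizer.
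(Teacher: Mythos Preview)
Your proof is correct and follows essentially the same approach as the paper: rescale $x_2=\epsilon t$ to extract the factor $\epsilon^{1-s}$, separate $\psi(0)$ from the remainder, identify the main integral via $t=\tan u$ together with \eqref{lem_FT_3} and the beta-function identity \eqref{calc5} (which is exactly your $\int_{\mathbb R}((1+t^2)^{-s/2}-|t|^{-s})\rd t=c_s/(2\tau_{2-s})$), and control the remainder using the $C^1$ bound on $\psi$ and the $|t|^{-1-s}$ (resp.\ $|t|^{-2-s}$) decay of the bracket. The only cosmetic difference is that the paper rescales the full integral at once and bounds $|\psi(\epsilon y)-\psi(0)|\le C\min\{\epsilon|y|,1\}$, whereas you first peel off $\{|x_2|\ge\delta\}$ and then rescale; the resulting estimates are the same.
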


\begin{proof}We first express the difference of the potentials as
\begin{equation}\begin{split}
& (W*\rho)(\epsilon,0)-(W*\rho)(0,0) \\
= & \int_{\mathbb{R}} \Big((\epsilon^2+y^2)^{-s/2}\Omega\Big(\tan^{-1}\frac{y}{\epsilon}\Big) - \Omega\Big(\frac{\pi}{2}\Big) |y|^{-s} + \epsilon^2 \Big)  \psi(y) \rd{y} \\
= & \epsilon^{1-s}\int_{\mathbb{R}} \Big((1+y^2)^{-s/2}\Omega(\tan^{-1}y) -  |y|^{-s}\Omega\Big(\frac{\pi}{2}\Big) \Big)  \psi(\epsilon y) \rd{y} + \epsilon^2\int_{\mathbb{R}} \psi \rd{y} \\
= & \psi(0)\epsilon^{1-s}\int_{\mathbb{R}} \Big((1+y^2)^{-s/2}\Omega(\tan^{-1}y) - |y|^{-s} \Omega\Big(\frac{\pi}{2}\Big) \Big) \rd{y} + \epsilon^{1-s}\cR + \epsilon^2\int_{\mathbb{R}} \psi \rd{y} \\
\end{split}\end{equation}
where the remainder term $\cR$ is given by
\begin{equation}
    \cR = \int_{\mathbb{R}} \Big((1+y^2)^{-s/2}\Omega(\tan^{-1}y) - |y|^{-s}\Omega\Big(\frac{\pi}{2}\Big)  \Big)  (\psi(\epsilon y)-\psi(0)) \rd{y} \,.
\end{equation}
To calculate the integral in the main term, we use a change of variable $y = \tan\theta$ to get
\begin{equation}\begin{split}
& \int_{\mathbb{R}} \Big((1+y^2)^{-s/2}\Omega(\tan^{-1}y) - |y|^{-s}\Omega\Big(\frac{\pi}{2}\Big)  \Big) \rd{y} \\
= & \int_{\mathbb{R}} (1+y^2)^{-s/2}\Big(\Omega(\tan^{-1}y)-\Omega\Big(\frac{\pi}{2}\Big)\Big)\rd{y} - \Omega\Big(\frac{\pi}{2}\Big)\int_{\mathbb{R}} \Big(|y|^{-s}-(1+y^2)^{-s/2}   \Big) \rd{y}  \\
= & \int_{-\pi/2}^{\pi/2} |\cos\theta|^{s-2}\Big(\Omega(\theta) -  \Omega\Big(\frac{\pi}{2}\Big)\Big)  \rd{\theta} + \Omega\Big(\frac{\pi}{2}\Big) \frac{\gamma_{s-2}}{2} 
=  \frac{1}{2\tau_{2-s}}\tilde{\Omega}(0)
\end{split}\end{equation}
using \eqref{calc5} and \eqref{lem_FT_3}.

To estimate the error term $\cR$, notice that the smoothness of $\Omega$ implies that $|\Omega(\tan^{-1}y)-\Omega(\frac{\pi}{2})| \le C\min\{|y|^{-1},1\}$. Therefore, we estimate as
\begin{equation}\begin{split}
& \Big|(1+y^2)^{-s/2}\Omega(\tan^{-1}y) - |y|^{-s}\Omega\Big(\frac{\pi}{2}\Big)\Big| \\
\le & (1+y^2)^{-s/2}\Big|\Omega(\tan^{-1}y) - \Omega\Big(\frac{\pi}{2}\Big)\Big| + \Omega\Big(\frac{\pi}{2}\Big)\Big(|y|^{-s} - (1+y^2)^{-s/2}\Big) \\
\le & C\Big((1+y^2)^{-s/2}\min\{|y|^{-1},1\} + |y|^{-s}\min\{1,|y|^{-2}\}\Big).
\end{split}\end{equation}
Next notice that $|\psi(\epsilon y)-\psi(0)| \le C\min\{\epsilon |y|, 1\}$ since $\psi$ is $C^1$ at 0 and bounded on $\mathbb{R}$. Therefore
\begin{equation}\begin{split}
|\cR| \le & C \int_{|y|\le 1}\Big((1+y^2)^{-s/2} + |y|^{-s}\Big)  \epsilon |y| \rd{y} 
 + C \int_{1\le |y| \le 1/\epsilon} \Big((1+y^2)^{-s/2}|y|^{-1} + |y|^{-s}|y|^{-2}\}\Big)  \epsilon |y| \rd{y} \\
& + C \int_{|y|>1/\epsilon} \Big((1+y^2)^{-s/2}|y|^{-1} + |y|^{-s}|y|^{-2}\Big) \rd{y} \\
\le & C\epsilon + C\epsilon^s + C\epsilon^s = O(\epsilon^s)
\end{split}\end{equation}
and the conclusion in the general case follows. When $\pi/2$ is a local minimum point of $\Omega$, we have the improved estimate $|\Omega(\tan^{-1}y)-\Omega(\frac{\pi}{2})| \le C\min\{|y|^{-2},1\}$ since $\Omega'(\pi/2)=0$. Therefore we get $|\cR| \le C\epsilon$ and the conclusion follows.
\end{proof}

Let us interpret the result of the Proposition \ref{prop_expan}. It is clear that this result can be used with translation or rotation. Also, if $\rho$ is the sum of \eqref{prop_expan_1} and a measure whose support does not contain 0, then \eqref{prop_expan_2} is still true for $\epsilon$ small enough since the measure away from 0 can only contribute $O(\epsilon^2)$ to $(W*\rho)(\epsilon,0)-(W*\rho)(0,0)$. In particular, if $\tilde{\Omega}(\varphi)>0$, then the support of any $d_\infty$-local minimizer cannot contain any isolate segment along the $\vec{e}_\varphi^\perp$ direction with a $C^1$ profile on it.

This is particularly interesting if $\tilde{\Omega}(0)>0$ but $\tilde{\Omega}(\varphi)=\tilde{\Omega}(-\varphi)<0$ for some other angle $\varphi$. In this case, Proposition \ref{prop_expan} implies that $\rho_{\textnormal{1D}}$ cannot be a $d_\infty$-local minimizer. One expects to see the support of an energy minimizer to present a zigzag behavior, consisting of segments with angles $\varphi+\frac\pi{2}$ and $-\varphi+\frac\pi{2}$ with $\tilde{\Omega}(\varphi)=\tilde{\Omega}(-\varphi)<0$. For the parametrized potential $W_\alpha$ in \eqref{Walpha}, if $c_s\tilde{\omega}(\varphi)$, the angle function for the Fourier transform of $|\bx|^{-s}\omega(\varphi)$, achieves minimum at some $\varphi\ne 0$, then such zigzag happens for $\alpha_L<\alpha<\alpha_{L,0}$, where $\alpha_L$ is defined in \eqref{alphaL}, and $\alpha_{L,0}$ (depending on $s$ and $\omega$) is defined as
\begin{equation}\label{alphaL0}
    \alpha_{L,0} := -\frac{1}{\tilde{\omega}(0)}.
\end{equation}
See Sections \ref{sec_example2} and \ref{sec_example0} as examples.

\begin{remark}
If $\rho$ as in \eqref{prop_expan_1} is a steady state with $\tilde{\Omega}(0)<0$, then \eqref{prop_expan_2} basically tells that $W*\rho$ achieves local minimum on $\supp\rho$, which is one of the Euler-Lagrange conditions. However, this is not sufficient to guarantee that $\rho$ is a $d_\infty$-local minimizer.

In fact, if $\omega$ satisfies the assumption of Theorem \ref{thm_degen}, it may happen that the minimum of $\omega$ is only achieved at $\omega(\frac{\pi}{2})=0$. In this case, the formula \eqref{lem_FT_3} guarantees $\tilde{\omega}(0)<0$, which implies $\tilde{\Omega}_\alpha(0)<0$ for sufficiently large $\alpha$. In this case, Proposition \ref{prop_expan} applies to $\rho_{\textnormal{1D}}$ for $\Omega_\alpha$ (at least at the points $(0,x_2),\,-R_1<x_2<R_1$), but $\rho_{\textnormal{1D}}$ is actually not a $d_\infty$-local minimizer by Theorem \ref{thm_degen}.
\end{remark}

%%%%%%%%%%%%%%%%%%%%%%%%%%%%%%%%%%%%%%%%%%%%%%%%

\section{The $s=0$ (logarithmic potential) case}\label{sec_s0}

In this section we consider the logarithmic anisotropic potential 
\begin{equation}\label{Wlog}
    W_{\log}(\bx) = -\ln |\bx| + \Omega(\theta) + |\bx|^2
\end{equation}
with $\Omega$ satisfying {\bf (H)}. The following lemma gives a viewpoint of $W_{\log}$ as a limit of potentials like \eqref{W} as $s\rightarrow 0^+$.
\begin{lemma}\label{lem_FTlog}
Let $\Omega$ satisfy {\bf (H)} and $W_{\log}$ be given by \eqref{Wlog} . Define
\begin{equation}\label{Ws}
    W_{(s)}(\bx) = |\bx|^{-s}\Big(\frac{1}{s} + \Omega(\theta)\Big) - \frac{1}{s} + |\bx|^2,\quad 0<s<1.
\end{equation}
Then $W_{(s)}(\bx)\rightarrow W_{\log}(\bx)$ uniformly on any compact subset of $\mathbb{R}^2$ not containing 0. Furthermore,
\begin{equation}\label{lem_FTlog_1}
    \hat{W}_{\log}(\xi) = |\xi|^{-2}\tilde{\Omega}_{\log}(\varphi),\quad \forall \xi\ne 0
\end{equation}
where $\tilde{\Omega}_{\log}$ is given by a uniform-in-$\varphi$ limit
\begin{equation}\label{lem_FTlog_2}
    \tilde{\Omega}_{\log}(\varphi):=\lim_{s\rightarrow 0^+}\tilde{\Omega}_{(s)}(\varphi)=-(2\pi)^{-2}\pv\int_{-\pi}^{\pi} |\cos(\varphi-\theta)|^{-2}\Big(\Omega(\theta)-\Omega\Big(\varphi+\frac{\pi}{2}\Big)\Big)\rd{\theta} + (2\pi)^{-1}.
\end{equation}
Here $\tilde{\Omega}_{(s)}(\varphi)$ denotes the angle function of the Fourier transform of $W_{(s)}$ given by \eqref{lem_FT_1}. There holds the inversion formula
\begin{equation}\label{lem_FTlog_3}
    \Omega(\theta) = -\int_{-\pi}^\pi \ln|\cos(\varphi-\theta)|\tilde{\Omega}_{\log}(\varphi)\rd{\varphi} + \textnormal{constant}.
\end{equation}
\end{lemma}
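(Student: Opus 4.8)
The plan is to obtain all four assertions by letting $s\to0^+$ in Lemma \ref{lem_FT} applied to the rescaled angle function $\Omega_s:=\tfrac1s+\Omega$, feeding in the elementary asymptotics of the structure constants from Appendix \ref{app:constants}. The local uniform convergence $W_{(s)}\to W_{\log}$ is immediate: on any annulus $\delta\le|\bx|\le\delta^{-1}$ the map $s\mapsto\frac{|\bx|^{-s}-1}{s}$ converges, monotonically and (by Dini's theorem) uniformly, to $-\ln|\bx|$, while $|\bx|^{-s}\Omega(\theta)\to\Omega(\theta)$ uniformly, so adding $|\bx|^2$ gives the claim. For use below I would also record the $s$-independent envelopes $|W_{(s)}(\bx)|\le\frac{|\bx|^{-s_0}-1}{s_0}+\|\Omega\|_\infty|\bx|^{-s_0}+|\bx|^2$ on $\{|\bx|\le1\}$ (an $L^1_{\mathrm{loc}}$ function since $s_0<2$) and $|W_{(s)}(\bx)|\le|\bx|^2+\ln|\bx|+\|\Omega\|_\infty$ on $\{|\bx|\ge1\}$, valid for $0<s\le s_0<1$; by dominated convergence these give $W_{(s)}\to W_{\log}$ in $L^1_{\mathrm{loc}}$ under a uniform polynomial envelope, hence in $\mathcal S'$, hence $\hat W_{(s)}\to\hat W_{\log}$ in $\mathcal S'$.

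Next, applying Lemma \ref{lem_FT} to $\Omega_s$ and discarding the pieces supported at the origin (the additive constant $-\tfrac1s$ and the quadratic term), one has $\hat W_{(s)}(\xi)=|\xi|^{-2+s}\tilde\Omega_{(s)}(\varphi)$ for $\xi\ne0$ with, since $\tfrac1s$ drops out of the difference in \eqref{lem_FT_3},
\begin{equation*}
\tilde\Omega_{(s)}(\varphi)=\tau_{2-s}\int_{-\pi}^{\pi}|\cos(\varphi-\theta)|^{-2+s}\Big(\Omega(\theta)-\Omega\big(\varphi+\tfrac\pi2\big)\Big)\rd\theta+\frac{c_s}{s}+c_s\,\Omega\big(\varphi+\tfrac\pi2\big).
\end{equation*}
From Appendix \ref{app:constants}, $\tau_{2-s}\to-(2\pi)^{-2}$, $c_s\to0$ and $c_s/s\to(2\pi)^{-1}$. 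For the integral I would set $g_\varphi(\theta):=\Omega(\theta)-\Omega(\varphi+\tfrac\pi2)$, which by {\bf (H)} vanishes to first order at both singular points $\theta=\varphi\pm\tfrac\pi2$ (using $\pi$-periodicity at the second), split off near each the leading linear part of $g_\varphi$ — whose integral against the even kernel $|\cos(\varphi-\theta)|^{-2+s}$ over a symmetric neighborhood vanishes for every $s\ge0$, and in the limit exactly in the principal-value sense — leaving a remainder dominated by a fixed integrable function uniformly in $s\in(0,s_0]$ and in $\varphi$. Dominated convergence then gives the integral $\to\pv\int_{-\pi}^{\pi}|\cos(\varphi-\theta)|^{-2}g_\varphi(\theta)\rd\theta$ uniformly in $\varphi$, so $\tilde\Omega_{(s)}\to\tilde\Omega_{\log}$ uniformly with $\tilde\Omega_{\log}$ as in \eqref{lem_FTlog_2}. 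Pairing $\hat W_{(s)}\to\hat W_{\log}$ with $\phi\in\mathcal S$, $0\notin\supp\phi$, and using $|\xi|^{-2+s}\to|\xi|^{-2}$ together with the uniform bound on $\tilde\Omega_{(s)}$, dominated convergence identifies $\hat W_{\log}$ away from the origin as in \eqref{lem_FTlog_1}.

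For the inversion formula I would apply \eqref{lem_FT_2r} (valid throughout $0<s<1$) to the pair $\big(|\bx|^{-s}\Omega_s(\theta),|\xi|^{-2+s}\tilde\Omega_{(s)}(\varphi)\big)$ and split off the constant $1$ from $|\cos(\theta-\varphi)|^{-s}$:
\begin{equation*}
\tfrac1s+\Omega(\theta)=\tau_s\int_{-\pi}^{\pi}\tilde\Omega_{(s)}(\varphi)\rd\varphi+(s\tau_s)\int_{-\pi}^{\pi}\frac{|\cos(\theta-\varphi)|^{-s}-1}{s}\,\tilde\Omega_{(s)}(\varphi)\rd\varphi.
\end{equation*}
Here $s\tau_s\to1$ by Appendix \ref{app:constants}, and $0\le\frac{|\cos(\theta-\varphi)|^{-s}-1}{s}\le\frac{|\cos(\theta-\varphi)|^{-s_0}-1}{s_0}\in L^1_\varphi$ for $s\le s_0<1$, with pointwise limit $-\ln|\cos(\theta-\varphi)|$; so dominated convergence (with the uniform convergence $\tilde\Omega_{(s)}\to\tilde\Omega_{\log}$) makes the second term converge, uniformly in $\theta$, to $-\int_{-\pi}^{\pi}\ln|\cos(\theta-\varphi)|\,\tilde\Omega_{\log}(\varphi)\rd\varphi$. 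Since the left side minus $\tfrac1s$ is the bounded function $\Omega(\theta)$, the $\theta$-independent quantity $\tau_s\int\tilde\Omega_{(s)}\rd\varphi-\tfrac1s$ must converge to a finite constant, and \eqref{lem_FTlog_3} follows. (If one prefers not to quote $s\tau_s\to1$, the same argument gives $\Omega(\theta)=-\lambda\int_{-\pi}^{\pi}\ln|\cos(\theta-\varphi)|\tilde\Omega_{\log}(\varphi)\rd\varphi+\mathrm{const}$ for some $\lambda>0$, and $\lambda=1$ is then forced by the case $\Omega\equiv1$, where $\tilde\Omega_{\log}\equiv(2\pi)^{-1}$ and $\int_{-\pi}^{\pi}\ln|\cos\psi|\rd\psi=-2\pi\ln2$.)

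The main obstacle is the \emph{uniform-in-$\varphi$} passage to the principal value: one must control simultaneously, uniformly in $\varphi$, the exact odd cancellation of the linear part of $g_\varphi$ about $\theta=\varphi\pm\tfrac\pi2$ and the size of the excised symmetric neighborhoods, which forces one to keep track of the quadratic Taylor remainder of $\Omega$ (bounded via $\|\Omega\|_{C^2}$) and to treat the two singularities two-sidedly through the $\pi$-periodicity of $\Omega$. Everything else reduces to limits of explicit Gamma-function and trigonometric constants together with dominated convergence.
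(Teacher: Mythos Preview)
Your proposal is correct and follows essentially the same route as the paper: prove $W_{(s)}\to W_{\log}$ locally uniformly away from $0$, upgrade to convergence in $\mathcal S'$ (the paper does this via an $L^2$ estimate on $\cB(0;\epsilon)$ rather than your $L^1_{\mathrm{loc}}$-plus-polynomial-envelope argument, but both work), pass to the limit in \eqref{lem_FT_3} applied to $\Omega_s=\tfrac1s+\Omega$ using the asymptotics of $\tau_{2-s},c_s$ and the odd/even splitting near $\varphi+\tfrac\pi2$ for the principal value, and finally derive \eqref{lem_FTlog_3} from \eqref{lem_FT_2r} via the same splitting $|\cos|^{-s}=1+s\cdot\frac{|\cos|^{-s}-1}{s}$ together with $s\tau_s\to1$.

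One small correction: your parenthetical alternative for pinning down $\lambda$ does not work as stated, since for $\Omega\equiv1$ the right-hand side $-\lambda\int_{-\pi}^\pi\ln|\cos(\theta-\varphi)|\,(2\pi)^{-1}\rd\varphi$ is independent of $\theta$ by periodicity, so the identity becomes ``constant $=$ constant'' and says nothing about $\lambda$. You need a nonconstant test $\Omega$ (e.g.\ $\Omega=\cos^2\theta$, whose transform you can compute directly) or simply rely on the appendix limit $s\tau_s\to1$, as you do in your main line.
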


\begin{remark}\label{rem_log}
Notice that the operator $\Omega\mapsto\tilde{\Omega}_{\log}$ as in \eqref{lem_FTlog_2} commutes with translation, and maps constant functions to $(2\pi)^{-1}$. Therefore $\tilde{\Omega}_{\log}$ always satisfies $\int_{-\pi}^\pi\tilde{\Omega}_{\log}(\varphi)\rd{\varphi}=1$, the last quantity representing the amount of logarithmic potential $-\ln|\bx|$ contained in $W_{\log}$. The formula \eqref{lem_FTlog_3} allows one to construct $\Omega$ from a given smooth $\tilde{\Omega}_{\log}$ with $\int_{-\pi}^\pi\tilde{\Omega}_{\log}(\varphi)\rd{\varphi}=1$ modulo a constant. Notice that the constants is irrelevant for minimizing the interaction energy.
\end{remark}

\begin{proof}
To see the convergence $W_{(s)}(\bx)\rightarrow W_{\log}(\bx)$ for $\bx\ne 0$, first notice that $\lim_{s\rightarrow 0^+}|\bx|^{-s}\Omega(\theta) = \Omega(\theta)$. Also, a Taylor expansion gives
\begin{equation}
    \frac{|\bx|^{-s}-1}{s} = \frac{e^{-s\ln|\bx|}-1}{s} = \frac{1-s\ln|\bx|+\frac{s^2y^2}{2}-1}{s} = -\ln|\bx| + \frac{s y^2}{2}
\end{equation}
where $y$ is between $\ln|\bx|$ and 0, and thus $\lim_{s\rightarrow 0^+}\frac{|\bx|^{-s}-1}{s} = -\ln|\bx|$. Both convergences are uniform on any compact subset of $\mathbb{R}^2$ not containing 0.

For any $0<s<1$ and $0<\epsilon<1$, we first estimate
\begin{equation}\begin{split}
    \|W_{(s)}\|_{L^2(\cB(0;\epsilon))}^2 \le & C\int_{\cB(0;\epsilon)}|\bx|^{-s}\rd{\bx} + \frac{C}{s^2}\int_{\cB(0;\epsilon)}(|\bx|^{-s}-1)^2\rd{\bx} = C\epsilon^{2-s} + \frac{C}{s^2}\int_0^\epsilon (r^{-s}-1)^2r\rd{r} \\
    = & C\epsilon^{2-s} + \frac{C}{s^2}\Big(\frac{\epsilon^{2-2s}}{2-2s} - \frac{2\epsilon^{2-s}}{2-s} + \frac{\epsilon^{2}}{2}\Big) = C\epsilon^{2-s} + \frac{C\epsilon^2}{s^2}\Big(\frac{e^{-2s\ln\epsilon}}{1-s} - \frac{2e^{-s\ln\epsilon}}{1-s/2} + 1\Big) \\
\end{split}\end{equation}
and we have the limit
\begin{equation}
    \lim_{s\rightarrow 0^+}\frac{1}{s^2}\Big|\frac{e^{-2s\ln\epsilon}}{1-s} - \frac{2e^{-s\ln\epsilon}}{1-s/2} + 1\Big| \le C\ln^2\epsilon
\end{equation}
by Taylor expansion at $s=0$. Therefore we obtain
\begin{equation}\label{WsL2}
    \limsup_{s\rightarrow 0^+}\|W_{(s)}\|_{L^2(\cB(0;\epsilon))}^2 \le C\epsilon^2\ln^2\epsilon
\end{equation}
Combined with the previously shown convergence, we see that $W_{(s)}\rightarrow W_{\log}$ in the space of tempered distributions, which implies $\hat{W}_{(s)}\rightarrow \hat{W}_{\log}$ in the same sense. By \eqref{lem_FT_1} and \eqref{lem_FT_3}, we have
\begin{equation}
    \hat{W}_{(s)}(\xi) = |\xi|^{-2+s}\tilde{\Omega}_{(s)}(\varphi),\quad \xi\ne 0
\end{equation}
with
\begin{equation}\label{tomegas}
    \tilde{\Omega}_{(s)}(\varphi) = \tau_{2-s}\int_{-\pi}^{\pi} |\cos(\varphi-\theta)|^{-2+s}\Big(\Omega(\theta)-\Omega\Big(\varphi+\frac{\pi}{2}\Big)\Big)\rd{\theta} + c_s\Big(\frac{1}{s}+\Omega\Big(\varphi+\frac{\pi}{2}\Big)\Big).
\end{equation}
As $s\rightarrow 0^+$, we have $\tau_{2-s}\rightarrow \tau_2 = -(2\pi)^{-2}$, $c_s\sim (2\pi)^{-1}s$ from the explicit formulas \eqref{calc4} and \eqref{calc2} as $\Gamma(s)\sim \tfrac1s$ as $s\rightarrow 0^+$. To take the limit of the integral in \eqref{tomegas}, one can cutoff the domain at $[\varphi+\frac{\pi}{2}-\epsilon,\varphi+\frac{\pi}{2}+\epsilon]$ and use a Taylor expansion of $\Omega$ inside this interval. The linear term from the Taylor expansion makes no contribution because $|\cos(\varphi-\theta)|^{-2+s}$ is symmetric around $\varphi+\frac{\pi}{2}$. In this way, we get the convergence $\lim_{s\rightarrow 0^+}\tilde{\Omega}_{(s)}(\varphi) =  \tilde{\Omega}_{\log}(\varphi)$ given as in \eqref{lem_FTlog_2} as a principal value integral, uniform in $\varphi$. It follows that $\lim_{s\rightarrow 0^+}\hat{W}_{(s)}(\xi)= |\xi|^{-2}\tilde{\Omega}_{\log}(\varphi)$, uniformly on any compact subset of $\mathbb{R}^2$ not containing 0. Combined with the distributional convergence $\hat{W}_{(s)}\rightarrow \hat{W}_{\log}$, we obtain \eqref{lem_FTlog_1}.

To see \eqref{lem_FTlog_3}, we apply \eqref{lem_FT_2r} to $\tilde{\Omega}_{(s)}$ and obtain
\begin{equation}\begin{split}
    \frac{1}{s} + \Omega(\theta) = & \tau_s\int_{-\pi}^\pi |\cos(\varphi-\theta)|^{-s}\tilde{\Omega}_{(s)}(\varphi)\rd{\varphi} 
    =  s\tau_s\int_{-\pi}^\pi \frac{|\cos(\varphi-\theta)|^{-s}-1}{s}\tilde{\Omega}_{(s)}(\varphi)\rd{\varphi} + \tau_s \int_{-\pi}^\pi\tilde{\Omega}_{(s)}(\varphi)\rd{\varphi}
\end{split}\end{equation}
i.e.,
\begin{equation}
    \Omega(\theta) = s\tau_s\int_{-\pi}^\pi \frac{|\cos(\varphi-\theta)|^{-s}-1}{s}\tilde{\Omega}_{(s)}(\varphi)\rd{\varphi} + C(s)
\end{equation}
for some constant $C(s)$ depending on $s$. As $s\rightarrow 0^+$, we have the limit $s\tau_s\rightarrow 1$ and the uniform-in-$\varphi$ limit $\tilde{\Omega}_{(s)}\rightarrow \tilde{\Omega}_{\log} $. We also have the limit $\frac{|\cos(\varphi-\theta)|^{-s}-1}{s}\rightarrow -\ln|\cos(\varphi-\theta)|$ in $L^2_\varphi$, by an estimate similar to \eqref{WsL2}. Then it is straightforward to pass to the limit in the above integral and obtain \eqref{lem_FTlog_3}.
\end{proof}

\begin{remark}
Notice that the previous result only gives us the pointwise values of the Fourier transform of the potential $\Omega(\theta)$ away from the origin. This information is enough for our purposes.
\end{remark}

For the interaction energy $E_{\log}$ associated to $W_{\log}$, one can also use the approximation argument in \cite[Lemma 2.5]{carrilloshu21} to justify the analogue of \eqref{EFT}:
\begin{equation}\label{EFTlog}
    2E_{\log}[\mu] = \int_{\mathbb{R}^2} |\xi|^{-2}\tilde{\Omega}_{\log}(\varphi)|\hat{\mu}(\xi)|^2 \rd{\xi}
\end{equation}
for any compactly supported signed measure $\mu$ with $\int_{\mathbb{R}^2}\mu(\bx)\rd{\bx}=\int_{\mathbb{R}^2}\bx\mu(\bx)\rd{\bx}=0$ and $E_{\log}[|\mu|]<\infty$. Then one can show that Theorem \ref{thm_LICequiv} also holds for $W_{\log}$.

\begin{corollary}
Let $W_{\log}$ is given by \eqref{Wlog} with $\Omega$ satisfying {\bf (H)}. Then $W_{\log}$ has the LIC property if and only if $\tilde{\Omega}_{\log}$ given as in \eqref{lem_FTlog_2} is nonnegative.
\end{corollary}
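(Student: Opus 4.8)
The plan is to transcribe the proof of Theorem \ref{thm_LICequiv} almost verbatim, replacing the Fourier identity \eqref{EFT} by its logarithmic analogue \eqref{EFTlog} and letting the angle function $\tilde{\Omega}_{\log}$ of $\hat{W}_{\log}$ play the role that $\tilde{\Omega}$ played for $0<s<2$. The two structural inputs of the $s>0$ argument both have direct counterparts: first, the representation $\hat{W}_{\log,\textnormal{rep}}(\xi)=|\xi|^{-2}\tilde{\Omega}_{\log}(\varphi)$ from \eqref{lem_FTlog_1}, which is continuous on $\mathbb{R}^2\setminus\{0\}$ since $\tilde{\Omega}_{\log}$ is continuous (being a uniform-in-$\varphi$ limit of the continuous functions $\tilde{\Omega}_{(s)}$, see Lemma \ref{lem_FTlog}, and in fact smooth as the image of a smooth $\Omega$ under the singular integral operator \eqref{lem_FTlog_2}); and second, the pointwise bound $|W_{\log}(\bx)|\le C(|\bx|^{-s_0}+|\bx|^2)$, valid for any fixed $0<s_0<1$, which is all that the error estimates in the proof of Proposition \ref{thm_concave} actually used about the singular part of the potential.

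For the ``if'' direction, suppose $\tilde{\Omega}_{\log}\ge 0$. By Remark \ref{rem_log} one has $\int_{-\pi}^{\pi}\tilde{\Omega}_{\log}(\varphi)\rd{\varphi}=1$, so $\tilde{\Omega}_{\log}$ is not identically zero, and by continuity there is a nonzero $\xi_0$ and a radius $0<\delta<|\xi_0|$ such that $\hat{W}_{\log,\textnormal{rep}}\ge \delta'>0$ on $\cB(\xi_0;\delta)$ (which does not contain the origin). For any compactly supported signed measure $\mu\ne 0$ with $\int_{\mathbb{R}^2}\mu=\int_{\mathbb{R}^2}\bx\mu=0$, the function $\hat{\mu}$ extends to an entire function on $\mathbb{C}^2$, hence cannot vanish identically on $\cB(\xi_0;\delta)$; therefore \eqref{EFTlog} gives $2E_{\log}[\mu]\ge \delta'\int_{\cB(\xi_0;\delta)}|\hat{\mu}(\xi)|^2\rd{\xi}>0$, which is exactly the LIC property.

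For the ``only if'' direction we argue by contraposition. If $\tilde{\Omega}_{\log}(\varphi_0)<0$ for some $\varphi_0$ — without loss of generality $\varphi_0=0$ — we reproduce the construction in the proof of Proposition \ref{thm_concave} with $s$ formally set to $0$: there is $\varphi_1>0$ with $\tilde{\Omega}_{\log}(\varphi)\le -c_1<0$ for $|\varphi|\le\varphi_1$, and the measure $\mu_1$ with $\hat{\mu}_1$ supported in $\cB((A,0);R)\cup\cB((-A,0);R)\subset\{|\varphi|\le\varphi_1\}$, $A=R/\varphi_1+R$, now satisfies, in place of \eqref{Emu1}, the estimate $2E_{\log}[\mu_1]=\int_{\mathbb{R}^2}|\xi|^{-2}\tilde{\Omega}_{\log}(\varphi)|\hat{\mu}_1(\xi)|^2\rd{\xi}\le -c_1\,(R/\varphi_1+2R)^{-2}\int_{\mathbb{R}^2}|\hat{\mu}_1|^2\rd{\xi}\le -c$ for a \emph{constant} $c>0$ independent of $R$ (the loss of the factor $R^s$ is harmless). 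Then the same truncation $\mu=\chi_{\cB(0;\epsilon)}\big(\mu_1-\tfrac1{|\cB(0;\epsilon)|}\int_{\cB(0;\epsilon)}\mu_1\big)$ and all subsequent estimates carry over unchanged, using $|W_{\log}(\bx)|\le C(|\bx|^{-s_0}+|\bx|^2)$ and $(W_{\log}*\chi_{\cB(0;\epsilon)})(\bx)\le C(1+|\bx|^2)$, so that $|E_{\log}[\mu]-E_{\log}[\mu_1]|\to 0$ as $R\to\infty$; choosing $R$ large yields a mean-zero $\mu\in L^\infty$ with $\supp\mu\subset\cB(0;\epsilon)$ and $E_{\log}[\mu]<0$, i.e.\ $W_{\log}$ is infinitesimal concave, and in particular not LIC.

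The only genuinely new point — and the step to verify with care — is that replacing the growing quantity $-cR^{s}$ in \eqref{Emu1} by a fixed negative constant still suffices to dominate the error terms of Proposition \ref{thm_concave}; but this is in fact easier than for $s>0$, since those error terms all carry strictly negative powers of $R$ once the parameter $m$ there is taken large, so they tend to $0$ regardless. Everything else is a transcription, with \eqref{lem_FTlog_1} and \eqref{EFTlog} supplying the substitutes for \eqref{lem_FT_1} and \eqref{EFT}.
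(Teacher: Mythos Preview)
Your proposal is correct and matches the paper's approach exactly: the paper does not give a separate proof of this corollary but simply states that, once \eqref{EFTlog} is available, ``one can show that Theorem \ref{thm_LICequiv} also holds for $W_{\log}$'', which is precisely the transcription you carry out. Your observation that in the logarithmic case \eqref{Emu1} becomes $E_{\log}[\mu_1]\le -c$ (a constant) while the error $|E_{\log}[\mu]-E_{\log}[\mu_1]|$ still decays in $R$ is the only point requiring comment, and you handle it correctly.
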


We now first extend Lemma \ref{lem_ab} to the logarithmic case by taking the limit $s\rightarrow 0^+$. For fixed $a,b\in(0,\infty)$, we notice that $\supp\rho_{a/R_2,b/R_2}$ is an ellipse with axes $a$ and $b$:
\begin{equation}
    \cB(0;a,b):=\Big\{(x_1,x_2):\frac{x_1^2}{a^2}+\frac{x_2^2}{b^2}\le 1\Big\}
\end{equation}
independent of $s$. We will pass to the limit as $s\rightarrow 0^+$ for the potential generated by $\rho_{a/R_2,b/R_2}$, which converges to a constant multiple of $\chi_{\cB(0;a,b)}$.
\begin{lemma}\label{lem_ablog}
Assume $\Omega$ satisfies {\bf (H)} and $a,b\in(0,\infty)$. Then
\begin{equation}\label{lem_ablog_0}
(-\ln|\bx|+\Omega(\theta)+A x_1^2 + B x_2^2 + 2D x_1x_2)*\frac{1}{|\cB(0;a,b)|}\chi_{\cB(0;a,b)} =  \textnormal{constant},\quad \bx\in \cB(0;a,b)
\end{equation}
where $A,B,D$ are given by 
\begin{equation}\label{lem_ablog_1}\begin{split}
\begin{pmatrix}A(a,b) \\
B(a,b) \\
D(a,b)
\end{pmatrix} 
= & \int_{-\pi}^\pi (a^2\cos^2\varphi + b^2\sin^2\varphi)^{-1}\begin{pmatrix}\cos^2\varphi \\
\sin^2\varphi \\
\cos\varphi\sin\varphi
\end{pmatrix}\, \tilde{\Omega}_{\log}(\varphi)\rd{\varphi}.\\
\end{split}\end{equation}
Furthermore, if $\tilde{\Omega}_{\log}\ge 0$, then $(-\ln|\bx|+\Omega(\theta)+A x_1^2 + B x_2^2 + 2D x_1x_2)*\frac{1}{|\cB(0;a,b)|}\chi_{\cB(0;a,b)}$ achieves its minimal value
on $\cB(0;a,b)$.

If $\tilde{\Omega}_{\log}\ge  0$ and $a=0,b>0$, then the same is true provided that the integral in the expression of $A$ is finite. If $\tilde{\Omega}_{\log}\ge  0$ and $a>0,b=0$, then the same is true provided that the integral in the expression of $B$ is finite. 
\end{lemma}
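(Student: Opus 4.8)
The plan is to derive Lemma~\ref{lem_ablog} from Lemma~\ref{lem_ab} by passing to the limit $s\to 0^+$, with Lemma~\ref{lem_FTlog} supplying the convergence of the Fourier data. I would first fix $a,b>0$ and apply Lemma~\ref{lem_ab} to the potential $|\bx|^{-s}\big(\tfrac1s+\Omega(\theta)\big)$ --- whose angle function still satisfies {\bf (H)} for $0<s<1$ --- with $(a,b)$ there replaced by $(a/R_2,b/R_2)$, so that $\supp\rho_{a/R_2,b/R_2}=\cB(0;a,b)$ and $\rho_{a/R_2,b/R_2}$ is a probability measure. The algebraic point is that when one reconstitutes $W_{(s)}=|\bx|^{-s}(\tfrac1s+\Omega)-\tfrac1s+|\bx|^2$, the quadratic term $|\bx|^2=x_1^2+x_2^2$ exactly absorbs the $-1$ shift of the coefficients, so Lemma~\ref{lem_ab} yields
\begin{equation*}
\big(W_{(s)}+(A_{(s)}-1)x_1^2+(B_{(s)}-1)x_2^2+2D_{(s)}x_1x_2\big)*\rho_{a/R_2,b/R_2}=C_{(s)}-\tfrac1s\quad\text{on }\cB(0;a,b),
\end{equation*}
with the value being $\ge C_{(s)}-\tfrac1s$ outside $\cB(0;a,b)$ whenever $\tilde{\Omega}_{(s)}\ge 0$; here $(A_{(s)},B_{(s)},D_{(s)})$ is given by \eqref{lem_ab_2} with $\tilde{\Omega}$ replaced by $\tilde{\Omega}_{(s)}$, and the $R_2$ factors cancel, leaving the prefactor $\tau_s R_1^{2+s}$.

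Next I would send $s\to 0^+$. By Lemma~\ref{lem_FTlog}, $\tilde{\Omega}_{(s)}\to\tilde{\Omega}_{\log}$ uniformly in $\varphi$ and $W_{(s)}\to W_{\log}$ locally uniformly away from the origin, together with the uniform bound $\limsup_{s\to 0^+}\|W_{(s)}\|_{L^2(\cB(0;\epsilon))}\le C\epsilon|\ln\epsilon|$ (see \eqref{WsL2}); also $\tau_s R_1^{2+s}\to 1$ from the explicit constants in Remark~\ref{rem_ab} ($\Gamma(s)\sim 1/s$, $\beta(\tfrac12,\tfrac32)=\tfrac\pi2$), and $\rho_{a/R_2,b/R_2}\to\tfrac1{|\cB(0;a,b)|}\chi_{\cB(0;a,b)}$ in $L^1$ and boundedly, once one notes $R_2=R_2(s)$ converges to its value at $s=0$. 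Splitting the convolution at $\{|\bx-\by|<\epsilon\}$ (controlled by the uniform $L^2$ bound on $W_{(s)}$ and the uniform $L^\infty$ bound on the densities) and $\{|\bx-\by|\ge\epsilon\}$ (controlled by local uniform convergence) gives $W_{(s)}*\rho_{a/R_2,b/R_2}\to W_{\log}*\tfrac1{|\cB(0;a,b)|}\chi_{\cB(0;a,b)}$ pointwise; bounded convergence gives $(A_{(s)},B_{(s)},D_{(s)})\to(A,B,D)$ of \eqref{lem_ablog_1} (the weight $(a^2\cos^2\varphi+b^2\sin^2\varphi)^{-(2+s)/2}$ is uniformly bounded since $a,b>0$); and again $|\bx|^2$ cancels the shift, so the limiting integrand is exactly $-\ln|\bx|+\Omega+A x_1^2+B x_2^2+2D x_1x_2$. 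Hence $C_{(s)}-\tfrac1s$ converges to a constant and \eqref{lem_ablog_0} follows. If moreover $\tilde{\Omega}_{\log}\ge c>0$, then $\tilde{\Omega}_{(s)}\ge c/2>0$ for small $s$, the ``$\ge$'' is available, and the minimum claim survives the limit.

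To pass from $\tilde{\Omega}_{\log}\ge c>0$ to the hypothesis $\tilde{\Omega}_{\log}\ge 0$, I would use the reduction $\Omega_n=(1-\tfrac1n)\Omega+\tfrac1n$, which satisfies {\bf (H)}; since $\Omega\mapsto\tilde{\Omega}_{\log}$ is affine and sends constants to $(2\pi)^{-1}$ (Remark~\ref{rem_log}), the transform of $\Omega_n$ is $(1-\tfrac1n)\tilde{\Omega}_{\log}+\tfrac1n(2\pi)^{-1}\ge\tfrac1{2\pi n}>0$; applying the previous step to each $\Omega_n$ and letting $n\to\infty$ (uniform convergence $\Omega_n\to\Omega$, dominated convergence for $A_n,B_n,D_n$, and stability of the minimum property under pointwise limits of the generated potentials) gives the claim for $a,b>0$. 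The degenerate case $a=0$ (and symmetrically $b=0$) I would treat exactly as in the last part of the proof of Lemma~\ref{lem_ab}: finiteness of the $A$-integral forces $\tilde{\Omega}_{\log}(0)=0$ (whence $B(0,b)=b^{-2}$ and $D(0,b)$ are automatically finite), so the case $0\notin\supp\tilde{\Omega}_{\log}$ follows by letting $a\to 0^+$ in the nondegenerate result (all integrals and convolutions converge since there is no singularity), and the general case follows by the monotone approximation $\tilde{\Omega}_{\log,n}=\chi_n\tilde{\Omega}_{\log}$ with $\chi_n\uparrow 1$ vanishing near $\{0,\pi\}$ --- renormalized and shifted by a constant so the $\Omega_n$ reconstructed through \eqref{lem_FTlog_3} satisfies {\bf (H)} --- using the monotone convergence theorem for the coefficients and for the generated potentials.

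The main obstacle I anticipate is the interchange of limits in the convolution near the singularity: $W_{(s)}$ does not converge uniformly there (indeed $W_{(s)}(0)=+\infty$), so one must route the argument through the uniform-in-$s$ $L^2$ bound on small balls from Lemma~\ref{lem_FTlog} while simultaneously controlling the $s$-dependent densities $\rho_{a/R_2,b/R_2}$; and, for the minimum statement, one must cope with the fact that $\tilde{\Omega}_{(s)}$ need not be nonnegative when $\tilde{\Omega}_{\log}$ is only nonnegative --- which is exactly what necessitates the additional $\Omega_n=(1-\tfrac1n)\Omega+\tfrac1n$ reduction (and, in the degenerate cases, the further monotone approximation of $\tilde{\Omega}_{\log}$).
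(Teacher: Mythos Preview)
Your proposal is correct and follows essentially the same route as the paper: apply Lemma~\ref{lem_ab} to $W_{(s)}$ with $\rho_{a/R_2,b/R_2}$, pass $s\to 0^+$ using the convergences of Lemma~\ref{lem_FTlog} and $\tau_s R_1^{2+s}\to 1$, then upgrade from $\tilde{\Omega}_{\log}\ge c>0$ to $\tilde{\Omega}_{\log}\ge 0$ by the affine perturbation toward a constant (the paper uses $t\Omega$, $t\to 1^-$, which is equivalent to your $\Omega_n$ up to an irrelevant additive constant), and handle the degenerate axes by monotone approximation of $\tilde\Omega_{\log}$ away from $\varphi=0$. One point to sharpen in that last step: renormalizing $\tilde\Omega_{\log,n}$ to have total integral $1$ destroys monotonicity in $n$, so the paper instead keeps $m_n=\int\tilde\Omega_{\log,n}<1$, works with the potential $-m_n\ln|\bx|+\Omega_n(\theta)$ (which is what \eqref{lem_FTlog_3} actually reconstructs from $\tilde\Omega_{\log,n}$), and recovers pointwise monotonicity on any fixed ball $\cB(0;R)$ by adding the constant $m_n\ln R$ before applying the monotone convergence theorem.
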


\begin{remark}\label{rem_Bab}
Here $\frac{1}{|\cB(0;0,b)|}\chi_{\cB(0;0,b)}$ is understood as the weak limit of $\frac{1}{|\cB(0;a,b)|}\chi_{\cB(0;a,b)}$ as $a\rightarrow 0^+$. It is given by the formula $C\delta(x_1)(b^2-x_2^2)^{1/2}\chi_{|x_2|\le b}$ with a normalization factor $C$. Similar for $\frac{1}{|\cB(0;a,0)|}\chi_{\cB(0;a,0)}$.
\end{remark}

\begin{proof}
We first treat the case $a,b>0$. We apply Lemma \ref{lem_ab} with $|\bx|^{-s}\Omega(\theta)$ replaced by  $|\bx|^{-s}(\frac{1}{s}+\Omega(\theta))-\frac{1}{s}$ and $\rho_{a,b}$ replaced by $\rho_{a/R_2,b/R_2}$, and obtain
\begin{equation}\label{abR2}\begin{split}
\Big(|\bx|^{-s}\Big(\frac{1}{s}+\Omega(\theta)\Big)  -\frac{1}{s}+A_{(s)} x_1^2 + B_{(s)} x_2^2 + 2D_{(s)} x_1x_2\Big) & *\rho_{a/R_2,b/R_2} =  \textnormal{constant},\\
& \bx\in \supp\rho_{a/R_2,b/R_2}=\cB(0;a,b)
\end{split}\end{equation}
with
\begin{equation}\label{abR2_1}\begin{split}
\begin{pmatrix}A_{(s)}(a,b) \\
B_{(s)}(a,b) \\
D_{(s)}(a,b)
\end{pmatrix} = & \tau_s (R_1/R_2)^{2+s}\int_{-\pi}^\pi ((a/R_2)^2\cos^2\varphi + (b/R_2)^2\sin^2\varphi)^{-(2+s)/2}\begin{pmatrix}\cos^2\varphi \\
\sin^2\varphi \\
\cos\varphi\sin\varphi
\end{pmatrix}\, \tilde{\Omega}_{(s)}(\varphi)\rd{\varphi}\\ 
= & \tau_s R_1^{2+s}\int_{-\pi}^\pi (a^2\cos^2\varphi + b^2\sin^2\varphi)^{-(2+s)/2}\begin{pmatrix}\cos^2\varphi \\
\sin^2\varphi \\
\cos\varphi\sin\varphi
\end{pmatrix}\, \tilde{\Omega}_{(s)}(\varphi)\rd{\varphi}\\
\end{split}\end{equation}
where $\tilde{\Omega}_{(s)}$ is given by \eqref{tomegas}. As $s\rightarrow 0^+$, we notice that $\tau_s R_1^{2+s}\rightarrow 1$ using the the formula in Remark \ref{rem_ab}. Therefore the limit of $A_{(s)},B_{(s)},D_{(s)}$ are given by \eqref{lem_ablog_1}, by the limit \eqref{lem_FTlog_2}.

Similar to the previous proof, we may show that the potential $(|\bx|^{-s}(\frac{1}{s}+\Omega(\theta))-\frac{1}{s}+A_{(s)} x_1^2 + B_{(s)} x_2^2 + 2D_{(s)} x_1x_2)$ in \eqref{abR2} converges to $(-\ln|\bx|+\Omega(\theta)+A x_1^2 + B x_2^2 + 2D x_1x_2)$ in $L^2$ on any compact set, as $s\rightarrow 0^+$. Also, $\rho_{a/R_2,b/R_2}$ converges to $\frac{1}{|\cB(0;a,b)|}\chi_{\cB(0;a,b)}$ in $L^2(\cB(0;a,b))$ since \eqref{rho2} gives the explicit formula $\rho_{a/R_2,b/R_2}(\bx) = C (1-\frac{x_1^2}{a^2}-\frac{x_2^2}{b^2})^{s/2}$ and its total mass is 1. Therefore we see that the LHS of \eqref{abR2} converges pointwisely to
\begin{equation}
    (-\ln|\bx|+\Omega(\theta)+A x_1^2 + B x_2^2 + 2D x_1x_2)*\frac{1}{|\cB(0;a,b)|}\chi_{\cB(0;a,b)}.
\end{equation}
Therefore we obtain \eqref{lem_ablog_0} from \eqref{abR2}. 

If we further assume $\tilde{\Omega}_{\log}\ge c > 0$, then the uniform-in-$\varphi$ limit in \eqref{lem_FTlog_2} shows that $\tilde{\Omega}_{(s)}\ge 0$ for sufficiently small $s>0$. Then, the same application of Lemma \ref{lem_ab} gives that the LHS of \eqref{abR2} achieves minimum on $\cB(0;a,b)$. Then taking the limit $s\rightarrow 0^+$ gives the conclusion that $(-\ln|\bx|+\Omega(\theta)+A x_1^2 + B x_2^2 + 2D x_1x_2)*\frac{1}{|\cB(0;a,b)|}\chi_{\cB(0;a,b)}$ achieves its minimal value
on $\cB(0;a,b)$. If we only assume $\tilde{\Omega}_{\log}\ge 0$, then the same is true because one can apply the previous result to $t\Omega,\,0<t<1$ (whose strict positivity of Fourier transform is guaranteed by \eqref{lem_FTlog_2}) and pass to the limit $t\rightarrow 1^-$.

For the case $\tilde{\Omega}_{\log}\ge 0$ and $a=0,b>0$, the conclusion can be obtained in the same way as before if $0\notin \supp \tilde{\Omega}_{\log}$. In fact, in this case $A_{(s)},B_{(s)},D_{(s)}$ are finite, and thus Lemma \ref{lem_ab} still gives \eqref{abR2}. For every fixed $x_1$, the potential $\Big(|\bx|^{-s}\Big(\frac{1}{s}+\Omega(\theta)\Big)  -\frac{1}{s}+A_{(s)} x_1^2 + B_{(s)} x_2^2 + 2D_{(s)} x_1x_2\Big)$, as a function of $x_2$, also admit the uniform-on-compact-set convergence away from the origin  and a uniform-in-$s$ $L^2$ estimate near the origin. Therefore, by viewing $\rho_{0,b/R_2}(\bx)=C\delta(x_1)(b^2-x_2^2)^{(1+s)/2}$ as a function of $x_2$, one can pass to the limit $s\rightarrow 0^+$ and obtain the same conclusion. 

For general $\tilde{\Omega}_{\log}\ge  0$ and $a=0,b>0$, one can proceed similar to the proof of Lemma \ref{lem_ab}. In fact, we approximate $\tilde{\Omega}_{\log}$ by an increasing sequence of nonnegative smooth functions $\tilde{\Omega}_{\log,n}$ with $0\notin \supp \tilde{\Omega}_{\log,n}$. If we denote $m_n:=\int\tilde{\Omega}_{\log,n}\rd{\varphi}$, then $\{m_n\}$ is an increasing sequence inside $(0,1)$ with $\lim_{n\rightarrow\infty}m_n=\int\tilde{\Omega}_{\log}\rd{\varphi}=1$. Due to Remark \ref{rem_log}, we cannot apply \eqref{lem_FTlog_3} to $\tilde{\Omega}_{\log,n}$, but we can instead apply it to $\frac{1}{m_n}\tilde{\Omega}_{\log,n}$ and construct the corresponding $\frac{1}{m_n}\Omega_n$ as
\begin{equation}
    \frac{1}{m_n}\Omega_n(\theta) = -\int_{-\pi}^\pi \ln|\cos(\varphi-\theta)|\frac{1}{m_n}\tilde{\Omega}_{\log,n}(\varphi)\rd{\varphi}.
\end{equation}
In other words, the potential $-m_n\ln|\bx|+\Omega_n(\theta)$ has Fourier transform $\tilde{\Omega}_{\log,n}(\xi)$ away from $\xi=0$. 

Since each potential $-m_n\ln|\bx|+\Omega_n(\theta)$ verifies $0\notin \supp \tilde{\Omega}_{\log,n}$, then it  satisfies the desired conclusion that 
\begin{equation}
    (-m_n\ln|\bx|+\Omega_n(\theta)+ A_n x_1^2 +  B_n x_2^2 + 2  D_n x_1x_2)*\frac{1}{|\cB(0;0,b)|}\chi_{\cB(0;0,b)}
\end{equation}
achieves its minimal value on $\cB(0;0,b)$, where $A_n,B_n,D_n$ are obtained from \eqref{lem_ablog_1} using $\tilde{\Omega}_{\log,n}$. The same holds if one adds a constant to the potential. Since \eqref{lem_FTlog_3} has a nonnegative convolution kernel $-\ln|\cos(\varphi-\theta)|$, we see that $\Omega_n(\theta)$ is increasing for each fixed $\theta$ and converges to $\Omega(\theta)$. Also, $\{m_n\}$ is increasing and converges to 1.  Therefore, on any ball $\cB(0;R)$, the potential $-m_n\ln|\bx|+\Omega_n(\theta)+m_n\ln R$ is pointwise increasing in $n$. This allows us to pass to the $n\rightarrow \infty$ limit by the monotone convergence theorem in the relation \eqref{lem_ablog_0} for $-m_n\ln|\bx|+\Omega_n(\theta)+m_n\ln R$ and obtain the conclusion for $-\ln|\bx|+\Omega(\theta)$, combining with $(A_n,B_n,D_n)\rightarrow(A,B,D)$ by the dominated convergence theorem since $A,B<\infty$ by assumption.

The case $\tilde{\Omega}_{\log}\ge  0$ and $a>0,b=0$ can be treated similarly.

\end{proof}

Denote $\cB(0;a,b,\eta)$ as the counterclockwise rotation of $\cB(0;a,b)$ around the origin by the angle $\eta$. In other words, $\chi_{\cB(0;a,b,\eta)}(\bx) = \chi_{\cB(0;a,b)}(\cR_{-\eta}\bx)$.
We notice that an analogue of Lemma \ref{lem_abf} for the formula \eqref{lem_ablog_1} can be proved in a similar way. Therefore, using a similar proof as Theorem \ref{thm_ell}, we obtain the following result.

\begin{theorem}
Let $W_{\log}$ be given by \eqref{Wlog} with $\Omega$ satisfying {\bf (H)}. Let $\tilde{\Omega}_{\log}$ be given by \eqref{lem_FTlog_1} with $\tilde{\Omega}_{\log}\ge 0$. Then exactly one of the following holds (up to translation):
\begin{itemize}
\item There exists a unique tuple $(a,b,\eta)\in (0,\infty)^2\times[0,\pi/2)$ such that $\frac{1}{|\cB(0;a,b)|}\chi_{\cB(0;a,b,\eta)}$
is the unique minimizer of $E_{\log}$.
\item There exists a unique pair $(b,\eta)\in(0,\infty)\times[0,\pi)$ such that $\frac{1}{|\cB(0;0,b)|}\chi_{\cB(0;0,b,\eta)}$ is the unique minimizer of $E_{\log}$.
\end{itemize}
If $\tilde{\Omega}_{\log}\ge c >0$, then item 1 must happen.
\end{theorem}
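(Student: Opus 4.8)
The plan is to follow the proof of Theorem~\ref{thm_ell} almost verbatim, with Lemma~\ref{lem_ablog} in place of Lemma~\ref{lem_ab}, the normalized ellipse indicators $\frac{1}{|\cB(0;a,b)|}\chi_{\cB(0;a,b)}$ in place of the densities $\rho_{a,b}$, and $\tilde{\Omega}_{\log}$ in place of $\tilde{\Omega}$. First I would record the logarithmic analog of Lemma~\ref{lem_EL}: since $W_{\log}$ is even, locally integrable, lower semicontinuous and has a logarithmic singularity at the origin, the representation \eqref{EFTlog} together with the equivalence between the LIC property and $\tilde{\Omega}_{\log}\ge 0$ established above place us in the setting of \cite[Theorem~2.4]{carrilloshu21}; hence, whenever $\tilde{\Omega}_{\log}\ge 0$, any compactly supported probability measure satisfying the first Euler--Lagrange condition \eqref{EL} is the unique global minimizer of $E_{\log}$ up to translation. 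This reduces the theorem to exhibiting a configuration of one of the two advertised shapes that satisfies \eqref{EL}.

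Next I would prove the analog of Lemma~\ref{lem_abf} for the coefficient formula \eqref{lem_ablog_1}; write $A_\eta,B_\eta,D_\eta$ for the quantities \eqref{lem_ablog_1} computed with $\tilde{\Omega}_{\log}$ replaced by $\tilde{\Omega}_{\log}(\eta+\cdot)$. The argument transfers with only cosmetic changes, the exponent $-1$ in \eqref{lem_ablog_1} replacing $-(2+s)/2$. When $\tilde{\Omega}_{\log}\ge c>0$: by homogeneity it suffices to solve $f(b):=A(1,b)/B(1,b)=1$; as $b\to0^+$ the numerator stays finite (its integrand is dominated by $\tilde{\Omega}_{\log}$) while the denominator diverges, because the pointwise limit of its integrand has a non-integrable singularity at $\pm\pi/2$ thanks to $\tilde{\Omega}_{\log}\ge c>0$, so $f(b)\to0$; similarly $f(b)\to\infty$ as $b\to\infty$, and continuity supplies a solution, unique because $\partial(A,B)/\partial(a,b)$ is negative definite (a positively weighted integral of rank-one positive semidefinite matrices whose eigendirections vary with $\varphi$), exactly as in Lemma~\ref{lem_abf}. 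Then $g(\eta):=D_\eta(a_\eta,b_\eta)$ is continuous, and the $\tfrac{\pi}{2}$-rotation symmetry of the ellipse family gives $g(\tfrac{\pi}{2})=-g(0)$, so $g$ vanishes at some $\eta\in[0,\tfrac{\pi}{2})$; this is case~i). For general $\tilde{\Omega}_{\log}\ge0$ I would use that, by \eqref{lem_FTlog_2}, replacing $\Omega$ by $t\Omega$ replaces $\tilde{\Omega}_{\log}$ by $t\tilde{\Omega}_{\log}+(1-t)(2\pi)^{-1}$, which for $t\in(0,1)$ is bounded below by a positive constant; applying the previous step with $t=1-\tfrac1n$ and passing to a subsequential limit $(a_n,b_n,\eta_n)\to(a,b,\eta)$, the relations $A_{\eta_n}(a_n,b_n)=B_{\eta_n}(a_n,b_n)=1$ force $a,b<\infty$ and $(a,b)\ne(0,0)$ just as in Lemma~\ref{lem_abf}. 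If $a,b>0$ we land in case~i); if, say, $a=0$, dominated convergence gives $B_\eta(0,b)=1$ and $D_\eta(0,b)=0$, the integral representation gives $A_\eta(0,b)\ge0$, and Fatou's lemma applied to $A_{\eta_n}(a_n,b_n)\to1$ gives $A_\eta(0,b)\le1$ (in particular the integral defining $A_\eta(0,b)$ is finite, so Lemma~\ref{lem_ablog} applies in its degenerate form); this is case~ii), with $\eta\in[0,\tfrac{\pi}{2})$ or $\eta\in[\tfrac{\pi}{2},\pi)$ according to whether $a$ or $b$ degenerated.

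Finally I would close the argument as in the proof of Theorem~\ref{thm_ell}. In case~i), Lemma~\ref{lem_ablog} with $(A,B,D)=(1,1,0)$ shows $\frac{1}{|\cB(0;a,b)|}\chi_{\cB(0;a,b)}$ satisfies \eqref{EL} for the angle function $\Omega(\cdot+\eta)$, hence is its unique minimizer, i.e.\ $\frac{1}{|\cB(0;a,b)|}\chi_{\cB(0;a,b,\eta)}$ is the unique minimizer of $E_{\log}$. In case~ii), Lemma~\ref{lem_ablog} gives that $(-\ln|\bx|+\Omega(\theta+\eta)+A x_1^2+x_2^2)*\frac{1}{|\cB(0;0,b)|}\chi_{\cB(0;0,b)}$ attains its minimum on $\cB(0;0,b)$; adding the nonnegative function $(1-A)x_1^2$, which vanishes on that vertical segment, preserves this, so \eqref{EL} holds for the full quadratic confinement and $\frac{1}{|\cB(0;0,b)|}\chi_{\cB(0;0,b,\eta)}$ is the unique minimizer. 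Uniqueness of the parameters, and the fact that exactly one alternative occurs, both follow from uniqueness up to translation of the minimizer together with the injectivity of the two parameterizations on the stated ranges, as in Theorem~\ref{thm_ell}; and if $\tilde{\Omega}_{\log}\ge c>0$, only case~i) can arise. I expect the only point demanding genuine care to be the degenerate branch of the Lemma~\ref{lem_abf} analog — confirming finiteness and non-collapse of the limiting $(a,b)$ and running the Fatou estimate to obtain $A\le1$ — but this is entirely parallel to STEP~2 of the proof of Lemma~\ref{lem_abf}, so the proof is essentially a transcription of the $0<s<1$ arguments through the limit $s\to0^+$ already carried out in Lemmas~\ref{lem_FTlog} and~\ref{lem_ablog}.
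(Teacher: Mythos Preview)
Your proposal is correct and follows essentially the same approach as the paper, which merely says that an analogue of Lemma~\ref{lem_abf} for the formula \eqref{lem_ablog_1} can be proved in a similar way and that the result then follows as in the proof of Theorem~\ref{thm_ell}; you have filled in precisely those details. One small imprecision: in the degenerate branch you claim that dominated convergence gives $D_\eta(0,b)=0$, but the integrand of $D$ is not obviously dominated near $\varphi=0$ as $a_n\to0$; the clean route is the one you also mention, namely the Fatou argument on the full quadratic form $A_\eta x_1^2+B_\eta x_2^2+2D_\eta x_1x_2\le x_1^2+x_2^2$ (as in STEP~2 of Lemma~\ref{lem_abf}), which combined with $B_\eta(0,b)=1$ forces $D_\eta(0,b)=0$.
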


As mentioned in the introduction, the previous result generalizes \cite{MMSRV21-2}. Notice that no smallness assumption is needed on $\Omega$ and the condition $\tilde{\Omega}_{\log}$ is sharp due to Theorem \ref{thm_LICequiv}. Finally we give an analogue of Theorem \ref{thm_coer} for logarithmic potentials.

\begin{theorem}\label{thm_coerlog}
There exists a constant $C_*$ such that the following holds. Let $W_{\log}$ be given by \eqref{Wlog} with $\Omega$ satisfying {\bf (H)} and 
\begin{equation}
    \Omega(\theta)\ge \Omega(\frac{\pi}{2}) + C_*\big|\theta-\frac{\pi}{2}\big|^2,\quad \forall\theta\in [0,\pi]. 
\end{equation}
Then  $\rho_{\textnormal{1D}}$ is the unique minimizer of $E_{\log}$ (up to translation).

Assume $W_{\log,\alpha}$ is given by \eqref{Wlog} with $\Omega(\theta)=\alpha\omega(\theta)$, $\omega$ satisfying {\bf (h)} and 
\begin{equation}\label{thm_coerlog_1}
    \omega(\theta)\ge c_\omega\big|\theta-\frac{\pi}{2}\big|^2,\quad \forall\theta\in [0,\pi].
\end{equation}
Then there exists a unique $0<\alpha_*\le C_*/c_\omega$ (depending on $\omega$), such that for any $\alpha > \alpha_*$, $\rho_{\textnormal{1D}}$ is the unique minimizer of $E_{\log,\alpha}$ (up to translation), and for any $\alpha<\alpha_*$, $\rho_{\textnormal{1D}}$ is not a minimizer of $E_{\log,\alpha}$.
\end{theorem}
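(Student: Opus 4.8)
The plan is to mirror the proof of Theorem \ref{thm_coer}, replacing the ingredients of Sections \ref{sec_LIC}--\ref{sec_ell} by their logarithmic counterparts from Section \ref{sec_s0}: Lemma \ref{lem_ablog} in place of Lemma \ref{lem_ab}, the logarithmic analogue of Theorem \ref{thm_LICequiv} (the corollary preceding Lemma \ref{lem_ablog}) in place of Theorem \ref{thm_LICequiv}, and the logarithmic version of the Euler--Lagrange sufficiency Lemma \ref{lem_EL}, obtained from the approximation argument of \cite{carrilloshu21} together with \eqref{EFTlog}. First I would prove the logarithmic analogue of Lemma \ref{lem_coer}: there exists $\Omega_*$ satisfying {\bf (H)} with $\Omega_*(\tfrac\pi2)=0$ and $\Omega_*\ge 0$, such that $\tilde\Omega_{\log,*}\ge 0$ and $W_{\log,*}*\rho_{\textnormal{1D}}$ attains its minimum on $\supp\rho_{\textnormal{1D}}$. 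For this, choose $\tilde\Omega_{\log,*}$ to be an even, nonnegative, smooth function with $\int_{-\pi}^\pi\tilde\Omega_{\log,*}(\varphi)\rd\varphi=1$ and support inside a small neighbourhood of $\{\pm\tfrac\pi2\}$ (so in particular $0\notin\supp\tilde\Omega_{\log,*}$), and recover $\Omega_*$ from it via the inversion formula \eqref{lem_FTlog_3}, fixing the free additive constant by $\Omega_*(\tfrac\pi2)=0$; smoothness, evenness and $\Omega_*(\theta)=\Omega_*(-\theta)$ are inherited from $\tilde\Omega_{\log,*}$. Applying Lemma \ref{lem_ablog} with $a=0$, $b=R_1$ (admissible since $A(0,R_1)=R_1^{-2}\int_{-\pi}^\pi\cot^2\varphi\,\tilde\Omega_{\log,*}(\varphi)\rd\varphi<\infty$ because $0\notin\supp\tilde\Omega_{\log,*}$), one obtains $D(0,R_1)=0$ by evenness, $B(0,R_1)=R_1^{-2}\int_{-\pi}^\pi\tilde\Omega_{\log,*}(\varphi)\rd\varphi=1$ (using Remark \ref{rem_log} and that $R_1=1$, the radius of the support of the one-dimensional equilibrium measure for $-\ln|x|+|x|^2$), and $A(0,R_1)$ as small as we wish, in particular $<1$, by concentrating $\tilde\Omega_{\log,*}$ near $\pm\tfrac\pi2$ where $\cot^2\varphi$ is small. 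Hence $(-\ln|\bx|+\Omega_*(\theta)+A(0,R_1)x_1^2+x_2^2)*\rho_{\textnormal{1D}}$, and therefore also $W_{\log,*}*\rho_{\textnormal{1D}}=(-\ln|\bx|+\Omega_*(\theta)+x_1^2+x_2^2)*\rho_{\textnormal{1D}}$, attains its minimum on $\supp\rho_{\textnormal{1D}}$. Since $\tilde\Omega_{\log,*}\ge 0$, $W_{\log,*}$ is LIC, so by the logarithmic Lemma \ref{lem_EL} (and existence of a compactly supported minimizer, Appendix \ref{app_exist}) $\rho_{\textnormal{1D}}$ is its unique minimizer; consequently $\Omega_*\ge \Omega_*(\tfrac\pi2)=0$, strictly off $\tfrac\pi2$, since otherwise a rotated copy of $\rho_{\textnormal{1D}}$ would have strictly smaller energy.

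With $\Omega_*$ fixed, choose $C_*>0$ with $C_*|\theta-\tfrac\pi2|^2\ge\Omega_*(\theta)$ on $[0,\pi]$, equality only at $\tfrac\pi2$. Given $\Omega$ as in the statement, subtract the (minimization-irrelevant) constant $\Omega(\tfrac\pi2)$ to assume $\Omega(\tfrac\pi2)=0$; then $\Omega\ge\Omega_*$, strictly off $\tfrac\pi2$. For any compactly supported probability measure $\rho$,
\[
E_{\log}[\rho]\ \ge\ E_{\log,*}[\rho]\ \ge\ E_{\log,*}[\rho_{\textnormal{1D}}]\ =\ E_{\log}[\rho_{\textnormal{1D}}],
\]
where the outer relations hold because both energies of $\rho_{\textnormal{1D}}$ only see the value $\Omega(\tfrac\pi2)=0=\Omega_*(\tfrac\pi2)$, and the middle one is Step 1. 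If $\rho$ is not supported on a vertical line the first inequality is strict; if it is, the second is strict unless $\rho=\rho_{\textnormal{1D}}$ by uniqueness of the one-dimensional logarithmic minimizer; so $\rho_{\textnormal{1D}}$ is the unique minimizer of $E_{\log}$ up to translation. For $W_{\log,\alpha}$ with $\Omega=\alpha\omega$: if $\alpha\ge C_*/c_\omega$ then $\alpha\omega$ meets the hypothesis just treated, so $\rho_{\textnormal{1D}}$ is the unique minimizer of $E_{\log,\alpha}$. Writing $E_{\log,\alpha}[\rho]=E_{\log,0}[\rho]+\tfrac\alpha2 I[\rho]$ with $I[\rho]=\iint\omega(\theta)\,\rho(\by)\rd\by\,\rho(\bx)\rd\bx\ge 0$ ($\theta$ the angle of $\bx-\by$), noting that $I$ vanishes precisely on measures supported on a vertical line (as $\omega>0$ off $\tfrac\pi2$ by \eqref{thm_coerlog_1}) and that $I[\rho_{\textnormal{1D}}]=0$, a short computation shows that if $\rho_{\textnormal{1D}}$ minimizes $E_{\log,\alpha_1}$ it is the unique minimizer of $E_{\log,\alpha_2}$ for every $\alpha_2>\alpha_1$. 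For small $\alpha$, $\tilde\Omega_{\log,\alpha}=(2\pi)^{-1}(1+\alpha\tilde\omega)$ with $\tilde\omega$ smooth by Lemma \ref{lem_FTlog}, hence $\tilde\Omega_{\log,\alpha}\ge c>0$, and the logarithmic analogue of Theorem \ref{thm_ell} (the theorem following Lemma \ref{lem_ablog}) places us in its item~1, whose minimizer is a nondegenerate ellipse, not $\rho_{\textnormal{1D}}$. Therefore $\alpha_*:=\inf\{\alpha\ge 0:\rho_{\textnormal{1D}}\text{ is the unique minimizer of }E_{\log,\alpha}\}$ lies in $(0,C_*/c_\omega]$, $\rho_{\textnormal{1D}}$ is the unique minimizer for $\alpha>\alpha_*$ by the monotonicity, and it is not a minimizer for $\alpha<\alpha_*$, since otherwise the monotonicity would make it the unique minimizer of $E_{\log,(\alpha+\alpha_*)/2}$, contradicting the definition of $\alpha_*$ — exactly as in the proof of Theorem \ref{thm_coer}.

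The only genuinely new content is Step 1; everything else is a transcription of the $0<s<1$ arguments of Sections \ref{sec_ell}--\ref{sec:largealpha}. The delicate points in Step 1 are: (a) making sure the $s\to 0^+$ versions of Lemma \ref{lem_EL} and of the potential-generation identity of Lemma \ref{lem_ablog} genuinely apply to $W_{\log,*}$, in particular invoking the degenerate ($a=0$) case of Lemma \ref{lem_ablog} only after checking finiteness of the integral defining $A(0,R_1)$; and (b) the bookkeeping of additive constants in $\Omega$, since in the logarithmic case the normalization $B(0,R_1)=1$ comes for free from $\int\tilde\Omega_{\log}=1$ (Remark \ref{rem_log}) together with $R_1=1$, rather than from a pointwise normalization of $\Omega_*$ as in Lemma \ref{lem_coer}, so one has to be certain that $\rho_{\textnormal{1D}}$ is precisely the degenerate profile to which Lemma \ref{lem_ablog} is applied. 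I expect (a)--(b) to be where care is needed, but no substantially new idea beyond those already in Sections \ref{sec_ell}--\ref{sec:largealpha} and \ref{sec_s0}.
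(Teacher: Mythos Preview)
Your proposal is correct and follows essentially the same approach as the paper, which only sketches the argument: construct $\Omega_*$ via the inversion formula \eqref{lem_FTlog_3} from a nonnegative $\tilde\Omega_{\log,*}$ with $\int\tilde\Omega_{\log,*}=1$, $0\notin\supp\tilde\Omega_{\log,*}$, concentrated near $\pm\tfrac\pi2$, then run the comparison argument of Theorem \ref{thm_coer}. Your added details (the use of $b=R_1=1$ in the degenerate case of Lemma \ref{lem_ablog}, the identification $B(0,1)=\int\tilde\Omega_{\log,*}=1$ via Remark \ref{rem_log}, and the monotonicity/small-$\alpha$ arguments) are all correct and match what the paper leaves implicit.
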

Here $\rho_{\textnormal{1D}}$ is still given by \eqref{rho1D} (with $s=0$), where $\rho_1(x)=C_1(R_1^2-x^2)^{1/2}$ is the unique minimizer for the 1D interaction potential $-\ln|x|+|x|^2$. Notice that $-\ln|x|+|x|^2=\lim_{s\rightarrow 0^+} \frac{|x|^{-s}-1}{s}+|x|^2$. Therefore the constants in $\rho_1$ are $R_1=1$, $C_1=2/\pi$, which can be easily derived as the limit as $s\rightarrow 0^+$ of $R_1(s)s^{1/(-s-2)}$ and $C_1(s)s$ given by \eqref{R1C1}. 

Since the proof is similar to Theorem \ref{thm_coer}, we only give a sketch of the proof. Using \eqref{lem_FTlog_3}, one can first construct $\Omega_*$, such that $W_{\log,*}:=-\ln|\bx|+\Omega_*(\theta)+|\bx|^2$ satisfies $\hat{W}_*(\xi)\ge 0$ for any $\xi\ne 0$ and $W_**\rho_{\textnormal{1D}}$ achieves minimum on $\supp\rho_{\textnormal{1D}}$, similar to Lemma \ref{lem_coer}. This can be done by taking a smooth $\tilde{\Omega}_{\log,*}$, sufficiently concentrated near $\varphi=\pi/2$, with the properties $0\notin\supp\tilde{\Omega}_{\log,*}$, $\int_{-\pi}^\pi\tilde{\Omega}_{\log,*}\rd{\varphi}=1$. Then the same comparison argument as in the proof of Theorem \ref{thm_coer} gives the proof of Theorem \ref{thm_coerlog}.

%%%%%%%%%%%%%%%%%%%%%%%%%%%%%%%%%%%%%%%%%%%%%%%

\section{The range $1\leq s<2$}\label{sec_s12}

Finally, we discuss further the range $1\leq s<2$ for the potential $W$ in \eqref{W}. We remind the reader that we were able to compute its Fourier transform in Lemma \eqref{lem_FT}. We observe that for $1<s<2$ the convolution kernel in \eqref{lem_FT_2} is strictly positive, while for $s=1$ we have $\tilde{\Omega}(\varphi) = \Omega(\varphi+\frac{\pi}{2})$ by \eqref{lem_FT_3}. Combining with Theorem \ref{thm_LICequiv}, we obtain the following lemma.

\begin{lemma}
Let $1\le s < 2$ and $W$ be given by \eqref{W} with $\Omega$ satisfying {\bf (H)}. Then 
\begin{equation}
\hat{W}(\xi)>0,\quad \forall \xi\ne 0,
\end{equation}
i.e., $W$ has LIC.
\end{lemma}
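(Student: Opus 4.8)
The plan is to reduce the statement to Theorem \ref{thm_LICequiv}, which guarantees the LIC property as soon as $\tilde\Omega\ge 0$; and to get the sharper pointwise claim $\hat W(\xi)>0$ for all $\xi\neq 0$ it is enough to establish the strict inequality $\tilde\Omega(\varphi)>0$ for every $\varphi$, since by Lemma \ref{lem_FT} one has $\hat W(\xi)=\hat W_{\textnormal{rep}}(\xi)=|\xi|^{-2+s}\tilde\Omega(\varphi)$ for $\xi\neq 0$ (the quadratic tail $|\bx|^2$ only contributes a distribution supported at the origin, which is irrelevant for $\xi\ne 0$). I would treat the endpoint $s=1$ and the open range $1<s<2$ separately, since the formula \eqref{lem_FT_3} for $\tilde\Omega$ simplifies in different ways.

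For $s=1$, the constants of Appendix \ref{app:constants} give $\tau_1=0$ and $c_1=1$ (as already noted inside the proof of Lemma \ref{lem_FT}), so \eqref{lem_FT_3} collapses to $\tilde\Omega(\varphi)=\Omega(\varphi+\tfrac{\pi}{2})$; this is strictly positive for all $\varphi$ because $\Omega>0$ by {\bf (H)}.

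For $1<s<2$, I would use the simplified formula \eqref{lem_FT_2},
\begin{equation*}
\tilde\Omega(\varphi)=\tau_{2-s}\int_{-\pi}^{\pi}|\cos(\varphi-\theta)|^{-2+s}\,\Omega(\theta)\rd\theta .
\end{equation*}
Here $-2+s\in(-1,0)$, so the kernel $|\cos(\varphi-\theta)|^{-2+s}$ is locally integrable even at its zeros $\theta=\varphi\pm\tfrac{\pi}{2}$ (near which it behaves like $|\theta-\varphi\mp\tfrac{\pi}{2}|^{-2+s}$, integrable since $-2+s>-1$); the integral is therefore a finite, strictly positive number, being the integral of an a.e.\ strictly positive function against the strictly positive weight $\Omega$. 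The remaining point is that $\tau_{2-s}>0$ on this range --- in contrast with the range $0<s<1$, where $\tau_{2-s}<0$ (cf.\ Proposition \ref{prop_expan}). This is exactly the sign already encountered in the proof of Lemma \ref{lem_FT}: there one computes $\int_0^\infty r^{1-s}\cos r\rd r=\Gamma(2-s)\sin\tfrac{\pi(s-1)}{2}$, which is strictly positive for $1<s<2$, and unwinding the definitions \eqref{calc1} and \eqref{calc4} shows that $\tau_{2-s}$ has the same sign as this quantity. Hence $\tilde\Omega(\varphi)>0$ for every $\varphi$.

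In both cases $\tilde\Omega>0$ pointwise, so $\hat W(\xi)=|\xi|^{-2+s}\tilde\Omega(\varphi)>0$ for all $\xi\neq 0$, and Theorem \ref{thm_LICequiv} then yields the LIC property. There is no serious obstacle here; the only points needing care --- and hence the (mild) main difficulty --- are the bookkeeping with the appendix constants needed to confirm $\tau_{2-s}>0$ on $(1,2)$, and checking integrability of the kernel at $\cos(\varphi-\theta)=0$. Both are routine once the explicit constants are written out.
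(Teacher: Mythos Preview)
Your proposal is correct and follows essentially the same approach as the paper: the paper's proof is just the one-sentence observation preceding the lemma, namely that for $1<s<2$ the convolution kernel in \eqref{lem_FT_2} is strictly positive (since $\tau_{2-s}>0$ on that range), while for $s=1$ one has $\tilde{\Omega}(\varphi)=\Omega(\varphi+\tfrac{\pi}{2})>0$ directly from \eqref{lem_FT_3}, and one then invokes Theorem~\ref{thm_LICequiv}. Your write-up simply spells out the sign and integrability checks that the paper leaves implicit.
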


Then we proceed to study the unique energy minimizer. For simplicity, we will add the extra symmetry condition $\Omega(\theta)=\Omega(-\theta)$. We will show that the minimizer is given by some $\rho_{a,b}$ defined in \eqref{rhoab} with $a,b\in (0,\infty)$. 

\begin{lemma}\label{lem_ab2}
Let $1\le s < 2$ and $W$ be given by \eqref{W} with $\Omega$ satisfying {\bf (H)}, with $\Omega(\theta)=\Omega(-\theta)$. Assume $a,b\in(0,\infty)$. Then
\begin{equation}\label{lem_ab2_1}
(|\bx|^{-s}\Omega(\theta)+A x_1^2 + B x_2^2)*\rho_{a,b} =  C_{\Omega,a,b},\quad \bx\in \supp\rho_{a,b}
\end{equation}
for some constant $C_{\Omega,a,b}$, with
\begin{equation}\label{lem_ab2_2}\begin{split}
A(a,b) = & a^{-2}\frac{C_2}{\bar{C}_1(s-1)}\int_{-\pi/2}^{\pi/2} \big((s-1)\cos^2\theta + \sin^2\theta\big)  \\
& \cdot(a^2\cos^2\theta + b^2\sin^2\theta)^{-s/2}\Omega\Big(\tan^{-1}\Big(\frac{b}{a}\tan\theta\Big)\Big)\rd{\theta},\\ 
B(a,b) = & b^{-2}\frac{C_2}{\bar{C}_1(s-1)}\int_{-\pi/2}^{\pi/2} \big(\cos^2\theta + (s-1)\sin^2\theta\big)   \\
& \cdot(a^2\cos^2\theta + b^2\sin^2\theta)^{-s/2}\Omega\Big(\tan^{-1}\Big(\frac{b}{a}\tan\theta\Big)\Big)\rd{\theta},\\ 
\end{split}\end{equation}
where $\bar{C}_1$ denotes the constant $C_1$ defined in \eqref{R1C1} with $s$ replaced by $s-1\in (0,1)$ in the case $1<s<2$. In the case $s=1$, $\bar{C}_1(s-1)$ is understood as the limit $\lim_{s\rightarrow 1^+}\bar{C}_1(s-1)=2/\pi$.
\end{lemma}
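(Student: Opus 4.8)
The plan is to avoid the Fourier decomposition \eqref{decomp} — which is unavailable once $s\ge 1$ — and instead reduce everything to a \emph{one–dimensional} Riesz convolution of the strictly smaller exponent $s-1\in[0,1)$, where the explicit one–dimensional minimizer of $|x|^{-(s-1)}+|x|^2$ does all the work, exactly as in Lemma~\ref{lem_ab}. What makes this possible is that the profile exponent $s/2$ of $\rho_2$ equals $\tfrac12\big(1+(s-1)\big)$, so the restriction of $\rho_2$ to any chord is, up to an affine rescaling, precisely that one–dimensional equilibrium measure.

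First I would reduce to $a=b=1$. Since $\rho_{a,b}=T_\#\rho_2$ with $T=\mathrm{diag}(a,b)$, a change of variables in the convolution integral — using that $T$ maps a vector $\bw$ of polar angle $\theta$ to a vector of length $|\bw|\,(a^2\cos^2\theta+b^2\sin^2\theta)^{1/2}$ and polar angle $\tan^{-1}(\tfrac ba\tan\theta)$ — gives for $\bx_0=T\by_0$ with $\by_0\in\cB(0;R_2)=\supp\rho_2$
\[
\big(|\bx|^{-s}\Omega(\theta)\big)*\rho_{a,b}(T\by_0)=\big(|\by|^{-s}\Psi(\theta)\big)*\rho_2(\by_0),\qquad \Psi(\theta):=\big(a^2\cos^2\theta+b^2\sin^2\theta\big)^{-s/2}\Omega\!\Big(\tan^{-1}\!\big(\tfrac ba\tan\theta\big)\Big).
\]
By {\bf (H)} and $\Omega(\theta)=\Omega(-\theta)$, the weight $\Psi$ is continuous, positive, $\pi$–periodic and even. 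Then I would write the convolution in polar coordinates centred at the singularity and fold the antipodal rays $\varphi$, $\varphi+\pi$ (legitimate because $\Psi(\varphi+\pi)=\Psi(\varphi)$), obtaining
\[
\big(|\by|^{-s}\Psi(\theta)\big)*\rho_2(\by_0)=\int_{-\pi/2}^{\pi/2}\Psi(\varphi)\Big(\int_{\mathbb{R}}|u|^{-(s-1)}\rho_2(\by_0-u\vec{e}_\varphi)\,du\Big)d\varphi,
\]
in which the inner integral is an honest $1$D Riesz convolution of exponent $s-1<1$ and the whole iterated integral converges absolutely since $s<2$ (Tonelli applied to $|\Psi|$).

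For the inner integral I would use $\rho_2(\by_0-u\vec{e}_\varphi)=C_2\big(r^2-(u-u_*)^2\big)_+^{s/2}$ (from \eqref{rho2}) with $u_*=\by_0\cdot\vec{e}_\varphi$ and $r^2=R_2^2-|\by_0|^2+u_*^2$ the squared half–length of the chord; the Euler–Lagrange identity \eqref{V1} applied with the exponent $s-1$ in place of $s$, the explicit constants \eqref{R1C1}, and a rescaling (where the exponent arithmetic $s+1-(s-1)=2$ makes the dilation factor exactly $r^2/\bar{R}_1^2$) yield, for $\by_0$ in the open ellipse and hence on its closure by continuity of the convolution,
\[
\int_{\mathbb{R}}|u|^{-(s-1)}\rho_2(\by_0-u\vec{e}_\varphi)\,du=\frac{C_2}{\bar{C}_1(s-1)}\big(r^2-(s-1)u_*^2\big)=\frac{C_2}{\bar{C}_1(s-1)}\big(R_2^2-|\by_0|^2+(2-s)u_*^2\big),
\]
the constants collapsing thanks to the algebraic identity $(V_1-\int x^2\rho_1\,dx)/R_1^2=1/s$ implicit in the computation of Remark~\ref{rem_cE}, used at exponent $s-1$; for $s=1$ the kernel is the constant $1$, the step is trivial ($\int_{\mathbb{R}}\rho_2(\by_0-u\vec{e}_\varphi)\,du=\tfrac{\pi}{2}C_2r^2$), and it matches the stated convention $\bar{C}_1(s-1)\to 2/\pi$ as $s\to1^+$. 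Substituting back and expanding $u_*^2=(y_{0,1}\cos\varphi+y_{0,2}\sin\varphi)^2$ exhibits $(|\by|^{-s}\Psi(\theta))*\rho_2(\by_0)$ as a quadratic in $\by_0$ with no linear part and vanishing $y_{0,1}y_{0,2}$–coefficient (by evenness of $\Psi$, $\int_{-\pi/2}^{\pi/2}\Psi(\varphi)\cos\varphi\sin\varphi\,d\varphi=0$); reading off the $y_{0,1}^2$– and $y_{0,2}^2$–coefficients, undoing $x_1=ay_{0,1}$, $x_2=by_{0,2}$, and using $1-(2-s)\cos^2\varphi=(s-1)\cos^2\varphi+\sin^2\varphi$ gives precisely $A(a,b),B(a,b)$ as in \eqref{lem_ab2_2}. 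Finally, since $\rho_{a,b}$ has zero mean, $(Ax_1^2+Bx_2^2)*\rho_{a,b}=Ax_1^2+Bx_2^2+\mathrm{const}$, so adding it back cancels the quadratic and leaves the constant $C_{\Omega,a,b}$ of \eqref{lem_ab2_1}.

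The only real idea here is the observation that slicing $\rho_2$ along a chord produces a dilated copy of the $1$D equilibrium measure attached to the \emph{reduced} exponent $s-1$, which is exactly what lets a genuinely two–dimensional identity be read off from one–dimensional potential theory; after that everything is bookkeeping. I expect the main technical obstacle to be precisely that bookkeeping — checking that the $1$D Euler–Lagrange value, the second moment of $\rho_1$, the normalizations \eqref{R1C1}, and the Jacobian of the chord rescaling assemble into the exact coefficients displayed in \eqref{lem_ab2_2} — together with the care needed at the endpoint $s=1$, where the $1$D kernel degenerates to a constant.
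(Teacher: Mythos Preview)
Your proposal is correct and follows essentially the same approach as the paper: both reduce to the isotropic case via the change of variables $W_{\textnormal{rep}}*\rho_{a,b}(T\by_0)=(|\cdot|^{-s}\Psi)*\rho_2(\by_0)$ with exactly your $\Psi$ (the paper calls it $\Omega_{a,b}$ and performs this reduction last rather than first), write the $2$D convolution in polar coordinates as an angular average of $1$D Riesz convolutions of exponent $s-1$, recognize the chord restriction of $\rho_2$ as a dilate of the $1$D minimizer $\bar\rho_1$, apply the Euler--Lagrange identity \eqref{V1} at exponent $s-1$ to obtain the quadratic $\tfrac{C_2}{\bar C_1(s-1)}(R_2^2-(s-1)u_1^2-u_2^2)$, and treat $s=1$ by the same direct computation you sketch. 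The only differences are cosmetic (ordering, and your use of $u_*,r$ versus the paper's orthogonal coordinates $u_1,u_2$).
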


It can be shown that \eqref{lem_ab2_2} is equivalent to \eqref{lem_ab_2} (with $D=0$ due to the symmetry $\Omega(\theta)=\Omega(-\theta)$) for any $1\le s < 2$, with the understanding in Remark \ref{rem_ab}. The details of this equivalence lead to a cumbersome exercise with change of variables and special functions left to the interested reader. In fact, one can start from a change of variable $\tan\theta_1=\frac{b}{a}\tan\theta$ in \eqref{lem_ab2_2} to convert it into an integral against $\Omega(\theta_1)$. Then use $\tilde{\Omega} = \tau_{2-s}|\cos(\cdot)|^{-2+s}*\Omega$ to write \eqref{lem_ab_2} as a double integral and change the order of integrals. Then we have an outer integral with weight $\Omega(\theta)$, and the result can be obtained by calculating the inner integral in $\varphi$ explicitly. 

Since $\tilde{\Omega}\ge c > 0$ holds by \eqref{lem_FT_2} (for $1<s<2$) and \eqref{lem_FT_3} (for $s=1$), STEP 1 of the proof of Lemma \ref{lem_abf} works for $1\le s < 2$. In fact, most of the proof works in the same way, except for the justification of the limit of $f(b)$ as $b\to 0^+$. In the case $1\leq s<2$, both numerator and denominator of \eqref{fb} diverge to infinity, however the numerator is much smaller than the denominator, leading to the same result. We obtain the following consequence.
\begin{theorem}
Let $W$ be given by \eqref{W} with $1\le s < 2$ and $\Omega$ satisfying {\bf (H)} and $\Omega(\theta)=\Omega(-\theta)$. Then the unique energy minimizer is given by $\rho_{a,b}$ for some $a,b>0$.
\end{theorem}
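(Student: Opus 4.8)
The plan is to follow the proof of Theorem \ref{thm_ell} almost verbatim, with Lemma \ref{lem_ab} replaced by Lemma \ref{lem_ab2} and with the adapted STEP 1 of the proof of Lemma \ref{lem_abf}. Since $\Omega(\theta)=\Omega(-\theta)$ forces $D\equiv 0$, only the coefficients $A(a,b),B(a,b)$ play a role; and since $\tilde{\Omega}\ge c>0$ throughout $1\le s<2$ (by the preceding lemma), the degenerate alternative $\rho_{0,b}$ of Theorem \ref{thm_ell} cannot occur, so the target is exactly an ellipse $\rho_{a,b}$ with $a,b>0$. First I would note that $W$ has the LIC property, so $E$ has a unique compactly supported minimizer (existence by Appendix \ref{app_exist}, uniqueness up to translation by LIC); the task is to identify it with some $\rho_{a,b}$.

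Second, I would produce the unique $(a,b)\in(0,\infty)^2$ solving $A(a,b)=B(a,b)=1$. STEP 1 of the proof of Lemma \ref{lem_abf} transfers: homogeneity of $A,B$ reduces existence to finding $b$ with $f(b)=A(1,b)/B(1,b)=1$, and negative-definiteness of $\partial(A,B)/\partial(a,b)$ — established exactly as in Lemma \ref{lem_abf} since $\tilde{\Omega}\ge c>0$ — gives uniqueness. The only point needing a new argument is $\lim_{b\to0^+}f(b)=0$: for $1\le s<2$ both integrals in \eqref{fb} diverge, but near $\varphi=\pi/2$ the weight $(\cos^2\varphi+b^2\sin^2\varphi)^{-(2+s)/2}\sin^2\varphi$ has a strictly stronger angular singularity than that same weight times $\cos^2\varphi$, so the quotient still tends to $0$; symmetrically $f(b)\to\infty$ as $b\to\infty$. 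With $D\equiv0$ the rotation parameter $\eta$ is irrelevant, so the $g(\eta)$ step in Lemma \ref{lem_abf} is vacuous.

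Third, Lemma \ref{lem_ab2} applied with $(A,B)=(1,1)$ gives that $(|\bx|^{-s}\Omega(\theta)+|\bx|^2)*\rho_{a,b}=W*\rho_{a,b}$ is constant on $\supp\rho_{a,b}$. The remaining, and principal, difficulty is to upgrade this to the full Euler--Lagrange condition \eqref{EL}, i.e.\ that this constant is $\essinf(W*\rho_{a,b})$, after which Lemma \ref{lem_EL} identifies $\rho_{a,b}$ with the minimizer. For $0<s<1$ this was immediate from the ``furthermore'' clause of Lemma \ref{lem_ab}, which rested on the decomposition of Corollary \ref{cor_decomp} into $1$D potentials $|\bx\cdot\vec{e}_\varphi|^{-s}$ together with the minimizing property of the $1$D profile $\rho_1$ for $|x|^{-s}+|x|^2$; that decomposition is unavailable for $1\le s<2$ because $|\cos\varphi|^{-s}$ is no longer integrable over $S^1$.

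There are two plausible routes to close this gap. The one closest to the $0<s<1$ argument uses the positive superposition $|\bx|^{-s}\Omega(\theta)=c_s'\int_{-\pi}^{\pi}\bigl(|\bx\cdot\vec{e}_\psi|^{-(s-1)}\otimes\delta(\bx\cdot\vec{e}_\psi^{\perp})\bigr)\,\Omega(\psi)\,\rd\psi$ of $|\bx|^{-s}\Omega$ into line-concentrated kernels, weighted now by $\Omega>0$ itself; this identity is checked on the Fourier side via Lemma \ref{lem_FT}, using that $|\cos\varphi|^{s-2}$ is integrable and that $\Omega\mapsto\tilde{\Omega}$ is convolution by $\tau_{2-s}|\cos(\cdot)|^{s-2}$. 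Convolving $\rho_{a,b}$ against one such kernel reduces to a $1$D convolution of $|p|^{-(s-1)}$ with the line-slices of $\rho_{a,b}$, and each slice is, after rescaling, precisely the $1$D minimizer $C(\tilde{R}^2-p^2)_+^{s/2}$ of $|p|^{-(s-1)}+|p|^2$ (with $s-1\in(0,1)$, the endpoint $s=1$ being the benign case where $\tilde{\Omega}(\varphi)=\Omega(\varphi+\tfrac{\pi}{2})$); the slab-intersection argument of Lemma \ref{lem_ab} then gives that $W*\rho_{a,b}$ attains its minimum on $\supp\rho_{a,b}$. The alternative, softer route is a compactness argument in $s$: with uniform support bounds from a Lemma \ref{lem_R}-type estimate and $\tilde{\Omega}\ge c>0$ uniformly on compact $s$-ranges, the minimizers $\rho^\ast_s$ depend continuously on $s$, coincide with elliptic $\rho^{(s)}_{a(s),b(s)}$ for $0<s<1$, and their weak limits stay elliptic; since any elliptic minimizer forces $A=B=1$ by Lemma \ref{lem_ab2} and that system has a unique solution, continuation via the implicit function theorem at the non-degenerate Jacobian propagates ellipticity through $[1,2)$. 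I expect the first route to be cleaner to write, and I would single out as the main obstacle the verification that, after integrating the $q$-dependent one-dimensional constants in $\psi$, they recombine into the quadratic of Lemma \ref{lem_ab2} while the strict off-support inequalities survive the averaging.
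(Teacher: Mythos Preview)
Your proposal is correct and is essentially the paper's argument. The ``principal difficulty'' you flag --- upgrading constancy on $\supp\rho_{a,b}$ to the full Euler--Lagrange condition --- is not actually a gap: the proof of Lemma \ref{lem_ab2} already establishes the off-support strict inequality via \eqref{rho2rint2} and integration against the positive weight $\Omega(\theta)\rd\theta$, even though the lemma \emph{statement} only records constancy. (For $s=1$ the inequality in \eqref{rho2rint2} is non-strict direction-by-direction, but becomes strict after integration since for any $\bx\notin\supp\rho_2$ there is an open set of $\theta$ with $|u_2|>R_2$, and $\Omega>0$.) Your Route 1 is precisely this proof restated: the superposition you write is the polar decomposition $(W_{\textnormal{rep}}*\rho_{a,b})(\bx)=\int_{-\pi/2}^{\pi/2}\int_{\mathbb{R}}\rho_{a,b}(\bx-r\vec{e}_\theta)|r|^{1-s}\rd r\,\Omega(\theta)\rd\theta$, and the $1$D slices are the rescaled $\bar\rho_1$ minimizers for $|x|^{-(s-1)}+|x|^2$. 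So the paper's proof is exactly: Lemma \ref{lem_ab2} (with its implicit off-support inequality), the adapted STEP 1 of Lemma \ref{lem_abf} for the $f(b)$ limits, and Lemma \ref{lem_EL}.
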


\begin{proof}[Proof of Lemma \ref{lem_ab2}]
We first treat the case $a=b=1$ and $1<s<2$. We write the repulsive part of the potential as $W_{\textnormal{rep}}(\bx) = |\bx|^{-s}\Omega(\theta)$. We compute its contribution for the potential generated by the density $\rho_{1,1}=\rho_2$ as 
\begin{equation}\begin{split}
    (W_{\textnormal{rep}}*\rho_{1,1})(\bx) = & \int_{\mathbb{R}^2} W_{\textnormal{rep}}(\by) \rho_2(\bx-\by)\rd{\by} = \int_{-\pi}^\pi \int_0^\infty \rho_2(\bx-r \vec{e}_\theta) r^{1-s}\rd{r}\, \Omega(\theta)\rd{\theta}\\
    = & \int_{-\pi/2}^{\pi/2} \int_{\mathbb{R}} \rho_2(\bx-r \vec{e}_\theta) |r|^{1-s}\rd{r}\, \Omega(\theta)\rd{\theta}\,,
\end{split}\end{equation}
where the last equality uses $\Omega(\theta)=\Omega(\theta+\pi)$. 

Now we study the inner integral $\int_{\mathbb{R}} \rho_2(\bx-r \vec{e}_\theta) |r|^{1-s}\rd{r}$ for a fixed $\theta$. We write 
$
    \bx = u_1\vec{e}_\theta + u_2\vec{e}_\theta^\perp 
$, to obtain 
\begin{equation}
    \bx-r \vec{e}_\theta = (u_1-r)\vec{e}_\theta + u_2\vec{e}_\theta^\perp,\quad |\bx-r \vec{e}_\theta|^2 = (u_1-r)^2+u_2^2\,.
\end{equation}
Due to the definition of $\rho_2$ in \eqref{rho2}, if $|u_2|\ge R_2$, then $\int_{-\infty}^\infty \rho_2(\bx-r \vec{e}_\theta) |r|^{1-s}\rd{r}$ is clearly zero. If $|u_2|<R_2$, then
\begin{equation}\label{rho2rint}\begin{split}
\int_{\mathbb{R}} \rho_2(\bx-r \vec{e}_\theta) |r|^{1-s}\rd{r}  
& = C_2\int_{\mathbb{R}} (R_2^2-u_2^2-(u_1-r)^2)_+^{s/2} |r|^{1-s}\rd{r}  \\
& = \frac{C_2}{\bar{C}_1}\lambda^{-s-1}\int_{\mathbb{R}} \lambda\bar{\rho}_1(\lambda(u_1-r)) |r|^{1-s}\rd{r} \,. \\
\end{split}\end{equation}
where $\bar{\rho}_1$ denotes the 1D minimizer defined in \eqref{rho1D} with $s$ replaced by $s-1\in (0,1)$, similar for $\bar{C}_1, \bar{R}_1$, $\bar{V}_1$, and
\begin{equation}
    \lambda = \frac{\bar{R}_1}{\sqrt{R_2^2-u_2^2}}\,.
\end{equation}
The fact that $\bar{\rho}_1$ minimizes the energy associated to the potential $|x|^{1-s}+|x|^2$ implies that
\begin{equation}
\int_{\mathbb{R}} (|r|^{1-s}+|r|^{2})\bar{\rho}_1(u_1-r)\rd{r} \left\{\begin{split}
   & = \bar{V}_1,\quad u_1\in [-\bar{R}_1,\bar{R}_1] \\
   & > \bar{V}_1,\quad u_1\notin [-\bar{R}_1,\bar{R}_1] \\
\end{split}\right..
\end{equation}
Rescaling by $\lambda$, we get
\begin{equation}
\int_{\mathbb{R}} (|r|^{1-s}+\lambda^{1+s}|r|^{2})\lambda\bar{\rho}_1(\lambda(u_1-r))\rd{r} \left\{\begin{split}
   & = \tilde{V}_1\lambda^{s-1},\quad u_1\in [-\tilde{R}_1/\lambda,\bar{R}_1/\lambda] \\
   & > \bar{V}_1\lambda^{s-1},\quad u_1\notin [-\bar{R}_1/\lambda,\bar{R}_1/\lambda] \\
\end{split}\right..
\end{equation}
Notice that
\begin{equation}
    \int_{\mathbb{R}} |r|^2 \lambda\bar{\rho}_1(\lambda(u_1-r))\rd{r} = \int_{\mathbb{R}} |u_1-r|^2 \lambda\bar{\rho}_1(\lambda r)\rd{r} = \lambda^{-2}\int_{\mathbb{R}} |\lambda u_1-r|^2 \bar{\rho}_1( r)\rd{r} = u_1^2 + \lambda^{-2} \bar{C}_{1,*}
\end{equation}
where
\begin{equation}
    \bar{C}_{1,*}:=\int_{\mathbb{R}} |r|^2\bar{\rho}_1(r)\rd{r}
\end{equation}
is the second moment of $\bar{\rho}_1$. Therefore we see that 
\begin{equation}\label{V1}
\lambda^{-s-1}\int_{\mathbb{R}} |r|^{1-s}\lambda\bar{\rho}_1(\lambda(u_1-r))\rd{r} \left\{\begin{split}
   & = (\bar{V}_1-\bar{C}_{1,*})\lambda^{-2} -  u_1^2  ,\quad u_1\in [-\bar{R}_1/\lambda,\bar{R}_1/\lambda] \\
   & > (\bar{V}_1-\bar{C}_{1,*})\lambda^{-2} -  u_1^2,\quad u_1\notin [-\bar{R}_1/\lambda,\bar{R}_1/\lambda] \\
\end{split}\right..
\end{equation}
Notice also that 
\begin{equation}
    (\bar{V}_1-\bar{C}_{1,*})\lambda^{-2} -  u_1^2 = \frac{\bar{V}_1-\bar{C}_{1,*}}{\bar{R}_1^2}(R_2^2-u_2^2) -  u_1^2
\end{equation}
and a calculation using special functions shows that $\frac{\bar{V}_1-\bar{C}_{1,*}}{\bar{R}_1^2} = \frac{1}{s-1}$. Therefore, we get (for any $|u_2|<R_2$)
\begin{equation}
    \int_{\mathbb{R}} \rho_2(\bx-r \vec{e}_\theta) |r|^{1-s}\rd{r} + \frac{C_2}{\bar{C}_1}\Big(u_1^2 + \frac{1}{s-1}u_2^2\Big) 
    \left\{\begin{split}
   & = \frac{C_2R_2^2}{\bar{C}_1(s-1)}  ,\quad u_1\in \Big[-\sqrt{R_2^2-u_2^2},\sqrt{R_2^2-u_2^2}\Big] \\
   & > \frac{C_2R_2^2}{\bar{C}_1(s-1)},\quad u_1\notin \Big[-\sqrt{R_2^2-u_2^2},\sqrt{R_2^2-u_2^2}\Big] \\
\end{split}\right.,
\end{equation}
where the $u_1^2$ and $u_2^2$ terms have positive coefficients. Since the LHS is continuous in $(u_1,u_2)$ and increasing in $|u_2|$ for $|u_2|\ge R_2$, we see that (for any $\bx$)
\begin{equation}\label{rho2rint2}
    \int_{\mathbb{R}} \rho_2(\bx-r \vec{e}_\theta) |r|^{1-s}\rd{r} + \frac{C_2}{\bar{C}_1}\Big(u_1^2 + \frac{1}{s-1}u_2^2\Big) 
    \left\{\begin{split}
   & = \frac{C_2R_2^2}{\bar{C}_1(s-1)}  ,\quad \bx\in \supp\rho_2\\
   & > \frac{C_2R_2^2}{\bar{C}_1(s-1)},\quad \bx\notin \supp\rho_2 \\
\end{split}\right.\,.
\end{equation}
Then integrating in $\Omega(\theta)\rd{\theta}$ we get the conclusion, since $u_1^2 = (x_1\cos\theta+x_2\sin\theta)^2,\,u_2^2 = (-x_1\sin\theta+x_2\cos\theta)^2$ are quadratic functions in $x_1,x_2$, and the $x_1x_2$ terms are cancelled during integration due to the symmetry property $\Omega(\theta)=\Omega(-\theta)$.

For the case $a=b=1$ and $s=1$, one can conclude directly from \eqref{rho2rint} that
\begin{equation}\begin{split}
\int_{\mathbb{R}} \rho_2(\bx-r \vec{e}_\theta) |r|^{1-s}\rd{r}  
=  C_2\int_{\mathbb{R}} (R_2^2-u_2^2-(u_1-r)^2)_+^{1/2} \rd{r}  
=  \frac{\pi}{2}C_2(R_2^2-u_2^2)_+
\end{split}\end{equation}
which gives \eqref{rho2rint2} (with non-strict inequality) if one views $\bar{C}_1(s-1)$ as the limit $\lim_{s\rightarrow 1^+}\bar{C}_1(s-1)=2/\pi$ as in the statement of the lemma. Then the conclusion follows similarly as the previous case.

For the general case, notice that
\begin{equation}\begin{split}
    (W_{\textnormal{rep}}*\rho_{a,b})(\bx) = & \int_{\mathbb{R}^2} W_{\textnormal{rep}}(\bx-\by) \frac{1}{ab}\rho_2\Big(\frac{y_1}{a},\frac{y_2}{b}\Big)\rd{\by} 
    =  \int_{\mathbb{R}^2} W_{\textnormal{rep}}\big(x_1-a y_1,x_2-a y_2\big) \rho_2(\by)\rd{\by} \\
    = & \int_{\mathbb{R}^2} W_{\textnormal{rep}}\Big(a \Big(\frac{x_1}{a}-y_1\Big),b \Big(\frac{x_2}{b}-y_2\Big)\Big) \rho_2(\by)\rd{\by} 
    =  \big(W_{\textnormal{rep}}(a\cdot,b\cdot)*\rho_2\big)\Big(\frac{x_1}{a},\frac{x_2}{b}\Big) .
\end{split}\end{equation}
Notice that $W_{\textnormal{rep},a,b} := W_{\textnormal{rep}}(a\cdot,b\cdot)$ can be written as
\begin{equation}
    W_{\textnormal{rep},a,b}(\bx) = W_{\textnormal{rep}}(a |\bx| \cos\theta, b|\bx| \sin \theta) = |\bx|^{-s}\Omega_{a,b}(\theta) 
\end{equation}
with the angle function
\begin{equation}
    \Omega_{a,b}(\theta) = (a^2\cos^2\theta + b^2\sin^2\theta)^{-s/2}\Omega\Big(\tan^{-1}\Big(\frac{b}{a}\tan\theta\Big)\Big)
\end{equation}
satisfying {\bf (H)}. Therefore, applying the previous result to $W_{\textnormal{rep},a,b}*\rho_2$, we get the conclusion for general $a,b$.

\end{proof}

%%%%%%%%%%%%%%%%%%%%%%%%%%%%%%%%%%%%%%%%%%%%%%%

\section{Numerical examples}

In this section we give some numerical examples for the energy minimizers of $E_\alpha$, the interaction energy associated to the potential $W_\alpha$ given by \eqref{Walpha}. For this purpose, we consider the associated particle gradient flow
\begin{equation}\label{parflow}\begin{split}
    \dot{\bx}_j = -\frac{1}{N}\sum_{k=1,\,k\ne j}^N \nabla W(\bx_j-\bx_k),\quad j=1,\dots,N,
\end{split}\end{equation}
whose formal mean-field limit is the Wasserstein-2 gradient flow \eqref{flow}. The particle-level total energy
\begin{equation}
    E(\bx_1,\dots,\bx_N) = \frac{1}{2N^2}\sum_{j\ne k} W(\bx_j-\bx_k)
\end{equation}
corresponding to the energy functional \eqref{E}, is decreasing along the solution of \eqref{parflow}, and it is expected that a long time simulation of \eqref{parflow} will minimize the energy, at least locally. 

We take a few examples of $W_\alpha$ as the interaction potential and solve \eqref{parflow} numerically. We take the number of particles $N=1600$. We start from a random initial data (i.i.d. uniform distribution on $[-1/2,1/2]^2$) and solve \eqref{parflow} by the forward Euler method\footnote{For a more accurate simulation of \eqref{parflow}, higher order time integrators are preferred. However, this is not crucial in the current paper because our main interest is the energy minimizers instead of the gradient flow dynamics.}. The time steps are chosen adaptively, which guarantees the stability of the time integrator. The numerical simulation is terminated when the total energy stabilizes (up to a tolerance level $10^{-5}$ between adjacent time steps) or the total number of time steps is greater than $n_{\max}=20000$. It is expected that the final state of the particles is approximately an energy minimizer, at least locally.

In the rest of this section, we present the examples and the numerical results, as well as the explanation for them based on our theory from previous sections. We always take $s=0.4$ in the simulations unless mentioned otherwise. In the figures, the blue ellipses are the predicted shape of the unique minimizer in the LIC cases, based on Theorem \ref{thm_ell} and Lemma \ref{lem_ab}. The blue dashed lines indicate the height of $\rho_{\textnormal{1D}}$ as in \eqref{rho1D}. Different regimes for the parameter $\alpha$ are marked by different colors on the $\alpha$-axis.

\subsection{A classical example $\omega(\theta) = \cos^2\theta$}\label{sec_example1}
This example is the direct generalization of the anisotropic logarithmic potential in \cite{CMMRSV} to $0<s<1$. The Fourier transform of $W_\alpha$, in its angle variable, is given by
\begin{equation}
    \tilde{\Omega}_\alpha(\varphi) = c_s \Big(1+\alpha\frac{1-(2-s)\cos^2\varphi}{s}\Big).
\end{equation}
Therefore, $W_\alpha$ is LIC if and only if 
\begin{equation}
    \alpha \le \alpha_L = \frac{s}{1-s}
\end{equation}
where $\alpha_L$ is as in \eqref{alphaL}.
At $\alpha = \alpha_L$, we have $\tilde{\Omega}_{\alpha_L}(\varphi) = C\sin^2\varphi$. In this case, the function $f(b)$ defined in \eqref{fb}, taking $b\rightarrow\infty$, can be computed by
\begin{equation}\begin{split}
    f(\infty) = & \frac{\int_0^\pi |\sin\varphi|^{-s}\cos^2\varphi\rd{\varphi}}{\int_0^\pi |\sin\varphi|^{-s}\sin^2\varphi\rd{\varphi}} 
    =  \frac{\gamma_{-s}}{\gamma_{2-s}} - 1 
    =  \frac{-s+2}{-s+1} - 1 = \frac{1}{1-s}>1.
\end{split}\end{equation}
Therefore, item 1 of Theorem \ref{thm_ell} holds, i.e., the support of the unique minimizer is a proper ellipse. This implies that $\rho_{\textnormal{1D}}$ is not a minimizer for $W_{\alpha_L}$, and thus not a minimizer for $W_{\alpha}$ if $\alpha$ is slightly larger than $\alpha_L$. Therefore the minimizers behave as the following (see Figure \ref{fig:example1}):
\begin{itemize}
    \item For $0\le \alpha \le \alpha_L$ (marked blue on the $\alpha$-axis), the minimizer is some $\rho_{a,b}$ supported on a proper ellipse.
    \item For $\alpha_L<\alpha<\alpha_*$ (with $\alpha_L<\alpha_*$, marked green on the $\alpha$-axis), any minimizer is not an ellipse or $\rho_{\textnormal{1D}}$, and any of its superlevel set has no interior point due to Proposition \ref{thm_concave}.
    \item For $\alpha\ge \alpha_*$ (marked red on the $\alpha$-axis), $\rho_{\textnormal{1D}}$ is the unique minimizer due to Theorem \ref{thm_coer}.
\end{itemize}

\begin{remark}
It is worth noticing that the anisotropic logarithmic potential considered in \cite{CMMRSV} (i.e., the $s=0$ limiting case of the current example) does not admit the `green' phase as in Figure \ref{fig:example1}. In other words, as $\alpha$ increases, the minimizer starts as ellipses and transits into $\rho_{\textnormal{1D}}$ at $\alpha_L$ directly. It satisfies the conditions of Lemma \ref{lem_coer} for $\Omega_*$ (under a suitable generalization to logarithmic potentials). 

In the above example, we see that the natural generalization of the anisotropic logarithmic potential in \cite{CMMRSV} to $0<s<1$ admits the `green' phase, and thus its corresponding $1+\alpha_L \omega$ does not satisfy the requirements for $\Omega_*$ in Lemma \ref{lem_coer}. Therefore, even if one is only interested in the case $\omega(\theta)=\cos^2\theta$ with $0<s<1$, it does not seem possible to prove Theorem \ref{thm_coer} for it without considering a more general class of potentials. A bigger pool of potentials enables one to find the correct $\Omega_*$ to compare with, as in the proof of Theorem \ref{thm_coer}.
\end{remark}

\begin{figure}
    \centering
    \includegraphics[width=0.8\textwidth]{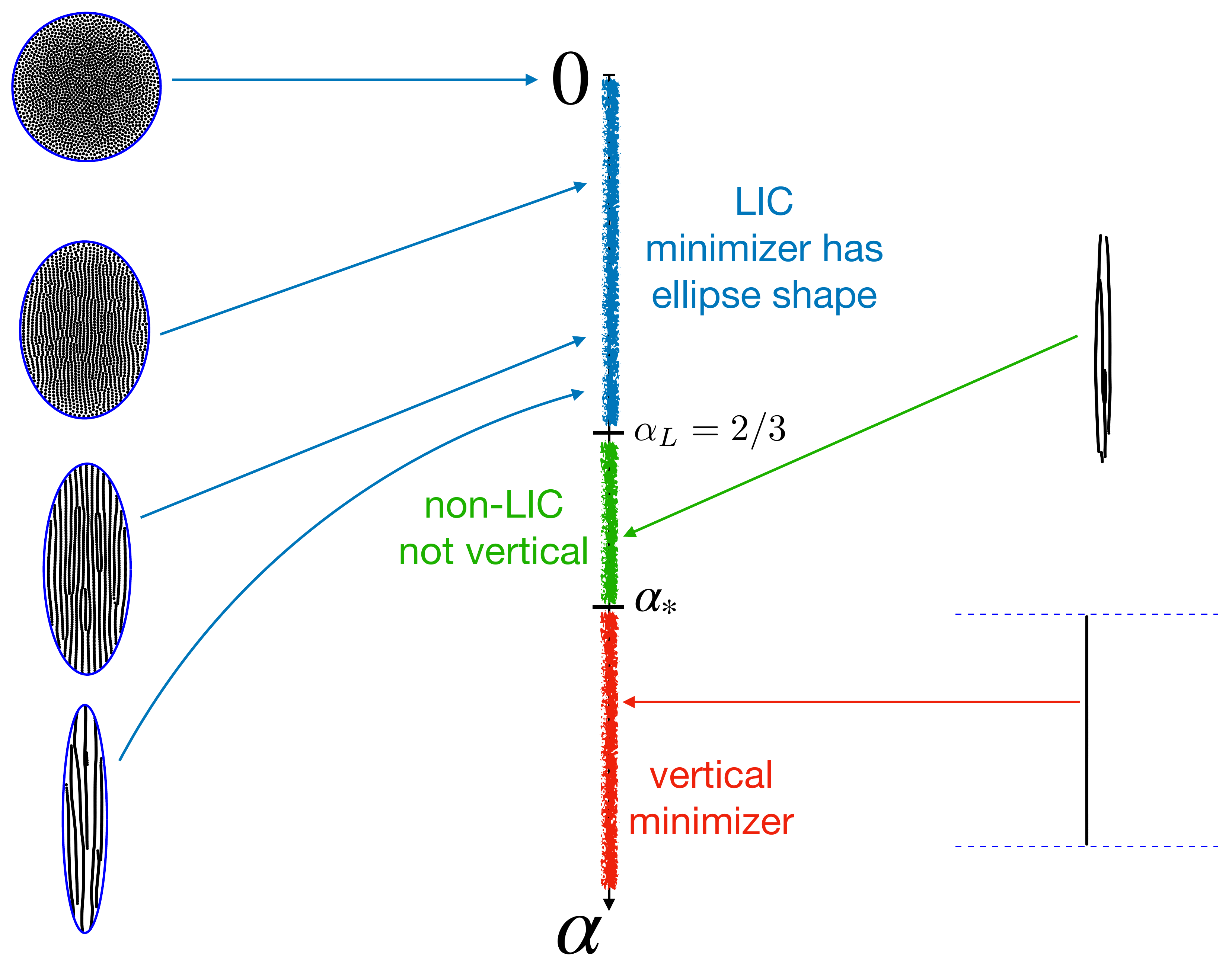}
    \caption{Numerical results for the energy minimizers, for $W_\alpha$ with $s=0.4$ and $\omega(\theta) = \cos^2\theta$. The $\alpha$ values are $0,0.2,0.5,0.65,0.675,0.7$.}
    \label{fig:example1}
\end{figure}

\subsection{An example with degeneracy near $\pi/2$: $\omega(\theta)=\cos^4\theta$}\label{sec_example2}
This example is designed to have a high order degeneracy near $\theta=\pi/2$. In fact, the condition \eqref{thm_coer_1} is not satisfied by this $\omega$, and Theorems \ref{thm_degen} and \ref{thm_width} apply with $\kappa=4$.

The Fourier transform of $W_\alpha$, in its angle variable, is given by
\begin{equation}
    \tilde{\Omega}_\alpha(\varphi) = c_s \Big(1+\alpha\frac{3-6(2-s)\cos^2\varphi + (4-s)(2-s)\cos^4\varphi}{s(s+2)}\Big)
\end{equation}
The last fraction $\tilde{\omega}(\varphi)$ has minimum $-\frac{6(1-s)}{s(s+2)(4-s)}$, achieved at $\cos^2\varphi=\frac{3}{4-s}\in (0,1)$. Therefore $W_\alpha$ is LIC if and only if 
\begin{equation}
    \alpha \le \alpha_L = \frac{s(s+2)(4-s)}{6(1-s)}.
\end{equation} 
Notice that $\tilde{\omega}(\varphi)$ is not minimized at $\varphi=0$. This means that for $\alpha$ in the range
\begin{equation}
    \alpha_L < \alpha < \alpha_{L,0} = -\frac{1}{\tilde{\omega}(0)} = \frac{s(s+2)}{(1-s)(1+s)},
\end{equation}
(where $\alpha_{L,0}$ is as in \eqref{alphaL0}) we have (1) LIC fails and any superlevel set of any $d_\infty$-local minimizer has no interior point as in Proposition \ref{thm_concave}; (2) no `vertical segment' as in Proposition \ref{prop_expan} is allowed in any $d_\infty$-local minimizer. In particular, for $\alpha$ slightly larger than $\alpha_L$, this forces the zigzag behavior: the support of a $d_\infty$-local minimizer is expected to be a union of segments along the directions $\vec{e}_\varphi^\perp$ with $|\varphi|$ near $\cos^{-1}\sqrt{\frac{3}{4-s}}$. 

Therefore the minimizers behave as the following (see Figure \ref{fig:example2}):
\begin{itemize}
    \item For $0\le \alpha \le \alpha_L$ (marked blue on the $\alpha$-axis), the minimizer is some $\rho_{a,b}$ supported on a proper ellipse.
    \item For $\alpha_L<\alpha<\alpha_{L,0}$ (marked green dashed line on the $\alpha$-axis), any minimizer is not an ellipse or $\rho_{\textnormal{1D}}$, and any of its superlevel set has no interior point. No `vertical segments' are allowed. Also, for $\alpha$ close to $\alpha_L$, the slopes of the tilted segments agree well with the predicted angle $\cos^{-1}\sqrt{\frac{3}{4-s}}$.
    \item For $\alpha\ge\alpha_{L,0}$ (marked green on the $\alpha$-axis), similar as the previous case, but we cannot exclude the possibility of `vertical segments'. Minimizers get closer to being vertical as $\alpha$ increases, but never becomes $\rho_{\textnormal{1D}}$ due to Theorem \ref{thm_degen}.
\end{itemize}

\begin{remark}\label{rem_par}
Although our theory guarantees that the zigzag behavior cannot appear for $\alpha\le \alpha_L$ in the continuum model, one can indeed observe such phenomenon in the particle simulation when $\alpha$ is slightly smaller than $\alpha_L$ (for example, the $\alpha=0.95$ case in Figure \ref{fig:example2}). From this example, one can see that the particle model may behave differently from the continuum model when the LIC condition barely fails. It would be interesting to apply numerical methods for the continuum model and compare with the numerical results for the particle model.
\end{remark}

\begin{figure}
    \centering
    \includegraphics[width=0.7\textwidth]{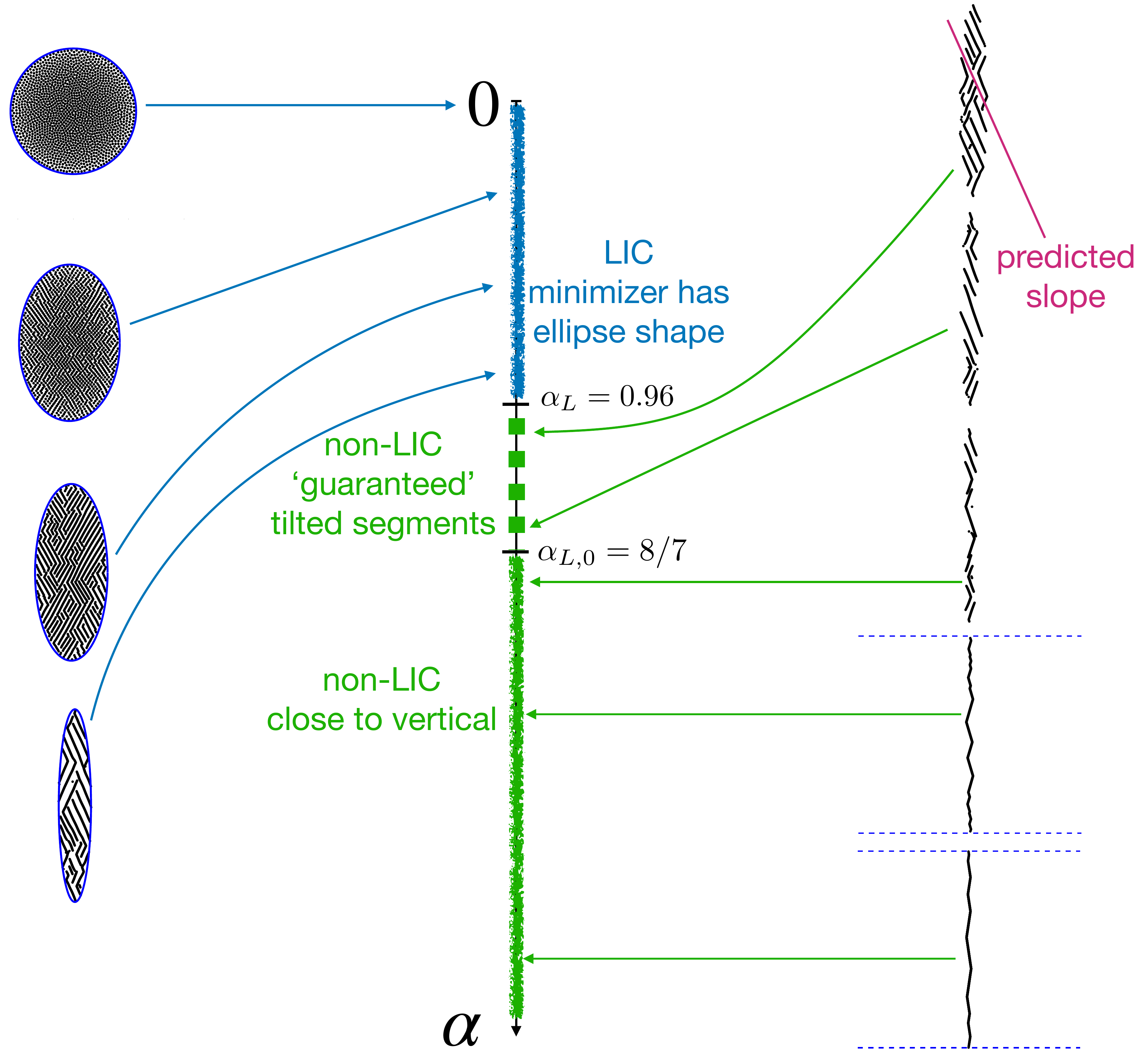}
    \caption{Numerical results for the energy minimizers, for $W_\alpha$ with $s=0.4$ and $\omega(\theta) = \cos^4\theta$. The $\alpha$ values are $0,0.3,0.6,0.95,1,1.1,1.15,1.5,5$.}
    \label{fig:example2}
\end{figure}

\subsection{A generic example $\omega(\theta) = \cos^4 \theta + 0.1 \cos^2\theta$}\label{sec_example0}

The Fourier transform of $W_\alpha$, in its angle variable, is given by (with $s=0.4$ substituted)
\begin{equation}
    \tilde{\Omega}_\alpha(\varphi) = c_s \Big(1+\alpha\frac{3.24-9.984\cos^2\varphi + 5.76\cos^4\varphi}{0.96}\Big)
\end{equation}

Denote the above fraction as $\tilde{\omega}(\varphi)$. The minimum of $\tilde{\omega}(\varphi)$ is achieved at $\cos^2\varphi = \frac{9.984}{2\cdot 5.76}$, and the minimal value is (approximately) $-1.1317$. Therefore $\alpha_L \approx 1/1.1317 \approx 0.8837$. Also, $\tilde{\omega}(0) = -1.025$. Therefore $\alpha_{L,0} = 1/1.025 \approx 0.9756$.

In this case, we have $\alpha_L<\alpha_{L,0}$, and there exists $\alpha_*$ in the sense of Theorem \ref{thm_coer}. The behavior of minimizers can be analyzed similar to the previous two examples using Theorems \ref{thm_ell} and \ref{thm_coer}, and Propositions \ref{thm_concave} and \ref{prop_expan}. Therefore the minimizers behave as the following (see Figure \ref{fig:example0}):
\begin{itemize}
    \item For $0\le \alpha \le \alpha_L$ (marked blue on the $\alpha$-axis), the minimizer is some $\rho_{a,b}$ supported on a proper ellipse.
    \item For $\alpha_L<\alpha<\alpha_{L,0}$ (marked green dashed line on the $\alpha$-axis), any minimizer is not an ellipse or $\rho_{\textnormal{1D}}$, and any of its superlevel set has no interior point. No `vertical segments' are allowed. Also, for $\alpha$ close to $\alpha_L$, the slopes of the tilted segments agree well with the predicted angle $\cos^{-1}\sqrt{\frac{9.984}{2\cdot 5.76}}$.
    \item For $\alpha_{L,0}\le\alpha<\alpha_*$ (marked green on the $\alpha$-axis), similar as the previous case, but we cannot exclude the possibility of `vertical segments'. Minimizers is not $\rho_{\textnormal{1D}}$ and exhibit zigzag behavior.
    \item For $\alpha\ge \alpha_*$ (marked red on the $\alpha$-axis), $\rho_{\textnormal{1D}}$ is the unique minimizer.
\end{itemize}

\subsection{An example with two preferred directions: $\omega(\theta)=\cos^2\theta\sin^2\theta$}

In this example, $\omega(\theta)$ achieves minimal value at two different angles $\theta=0$ and $\theta=\pi/2$, i.e., both vertical and horizontal directions are preferred when $\alpha$ is large. 

The Fourier transform of $W_\alpha$, in its angle variable, is given by 
\begin{equation}\begin{split}
    \tilde{\Omega}_\alpha(\varphi) 
     = & c_s \Big(1+\alpha\frac{-(1-s)+(4-s)(2-s)\cos^2\varphi - (4-s)(2-s)\cos^4\varphi}{s(s+2)}\Big) .
\end{split}\end{equation}
The last fraction $\tilde{\omega}(\varphi)$ has minimum $-\frac{1-s}{s(s+2)}$, achieved at $\cos^2\varphi=0,1$. Therefore $W_\alpha$ is LIC if and only if
\begin{equation}
    \alpha \le \alpha_L = \frac{s(s+2)}{1-s}.
\end{equation}

For $\alpha \le \alpha_L$, Theorem \ref{thm_ell} shows that the unique energy minimizer is some $\rho_{a,b}$. Since $\omega$ is symmetric with respect to exchanging $x_1$ and $x_2$, the same is true for the unique energy minimizer. This implies $a=b$ (i.e., the support of the minimizer is a ball), and their value can be determined by setting $A=B=1$ in \eqref{lem_ab_2}. 

Notice that Theorem \ref{thm_coer} does not apply because $\pi/2$ is not the unique minimum point of $\omega$. In fact, since $\omega$ is symmetric with respect to exchanging $x_1$ and $x_2$, suppose $\rho_{\textnormal{1D}}=\rho_1(x_2)\delta(x_1)$ is a minimizer of $E_\alpha$, then $\rho_1(x_1)\delta(x_2)$ is also a minimizer. For large $\alpha$, we do not know whether $\rho_{\textnormal{1D}}$ is a minimizer. Theorem \ref{thm_width} does not apply either because $\omega(0)=0$ makes \eqref{thm_width_1} false. Therefore the minimizers behave as the following (see Figure \ref{fig:example3}):
\begin{itemize}
    \item For $0\le \alpha \le \alpha_L$ (marked blue on the $\alpha$-axis), the minimizer is some $\rho_{a,a}$ supported in a ball.
    \item For $\alpha>\alpha_L$ (marked red on the $\alpha$-axis), any superlevel set of any minimizer has no interior point, but the precise shapes of minimizers are unknown. Numerical results show the formation of some fractal structure, in which particles tend to align either vertically or horizontally at different spatial scales. However, it is likely that such configuration is merely a local minimizer because $\rho_{\textnormal{1D}}$ clearly has smaller energy than it, at least for large $\alpha$.
\end{itemize}
The behavior of the minimizers for large $\alpha$ remains open in this case. Also, similar comments regarding the difference between particle and continuum models as in Remark \ref{rem_par} apply to this example.

\begin{figure}
    \centering
    \includegraphics[width=0.8\textwidth]{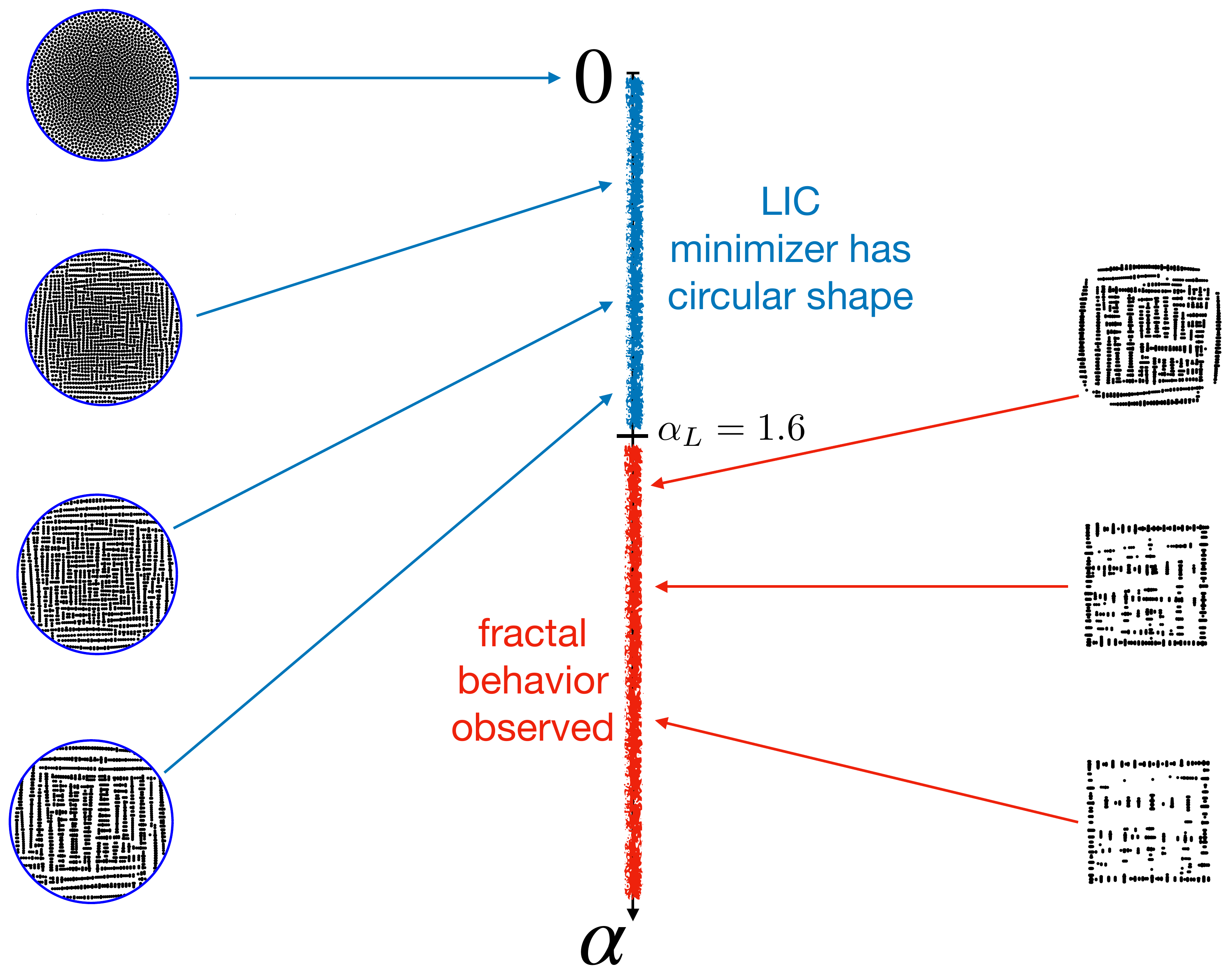}
    \caption{Numerical results for the energy minimizers, for $W_\alpha$ with $s=0.4$ and $\omega(\theta) = \cos^2\theta\sin^2\theta$. The $\alpha$ values are $0,0.5,1.1,1.5,1.7,3,5$.}
    \label{fig:example3}
\end{figure}

\subsection{An example with $1<s<2$}

In this example we take $s=1.4$ and $\omega(\theta)=\cos^2\theta$. The results in Section \ref{sec_s12} show that $W_\alpha$ is LIC for any $\alpha\ge 0$, and the unique energy minimizer is always some $\rho_{a,b}$. The numerical results shown in Figure \ref{fig:example4} verifies this phenomenon.

It is worth noticing that the gap between the cluster of particles and the predicted ellipse shape is larger than the previous examples with $s=0.4$. This is a consequence of the fact that $\rho_2(\bx) = C_2(R_2^2-|\bx|^2)^{s/2}$ has smaller values near the boundary of the support if $s$ is larger.

\begin{figure}
    \centering
    \includegraphics[width=0.7\textwidth]{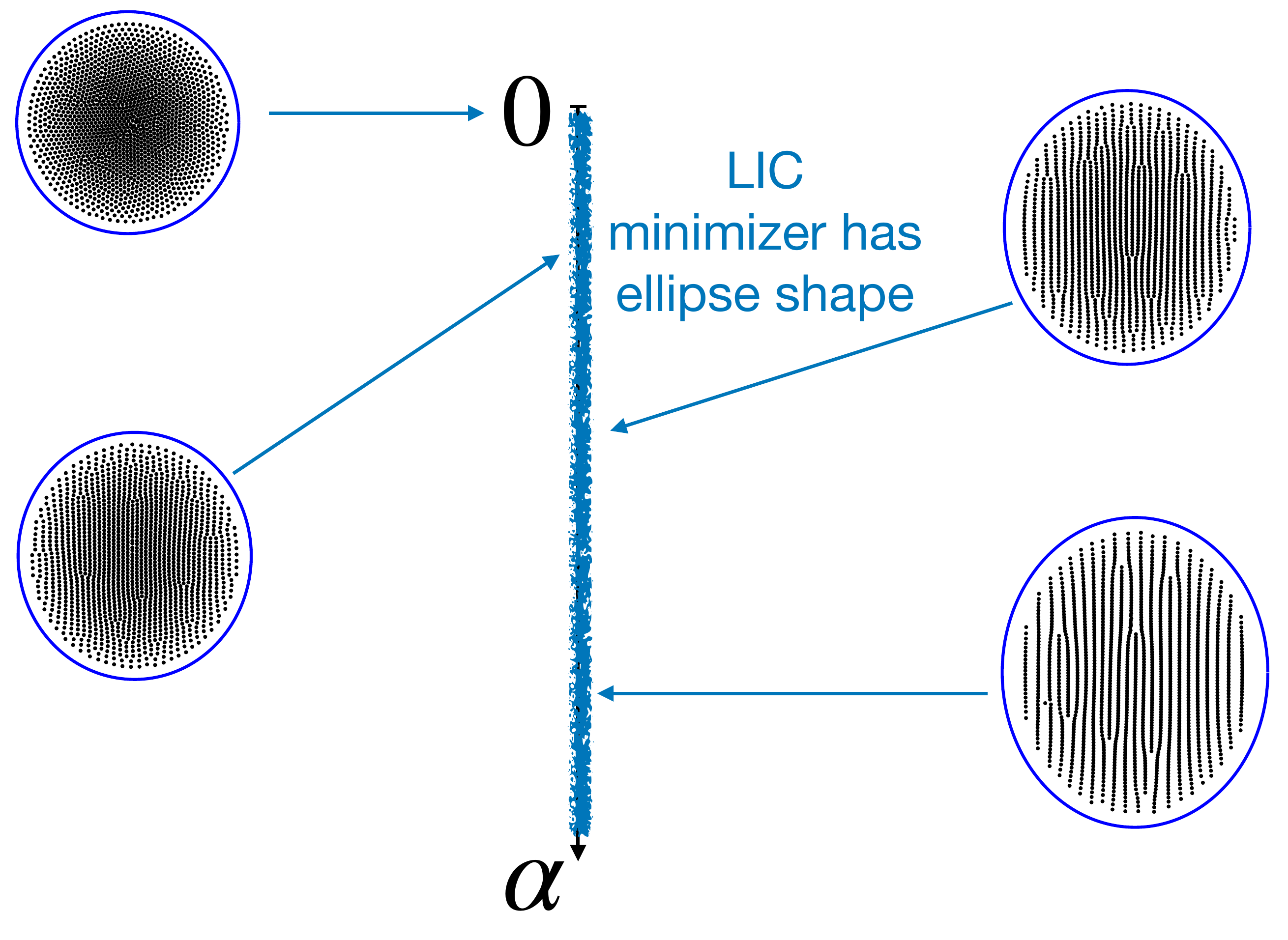}
    \caption{Numerical results for the energy minimizers, for $W_\alpha$ with $s=1.4$ and $\omega(\theta) = \cos^2\theta$. The $\alpha$ values are $0,0.6,1.5,3$.}
    \label{fig:example4}
\end{figure}

\subsection{Illustration of the asymptotic limit $s\rightarrow 0^+$}

Finally we give an example illustrating the asymptotic limit $s\rightarrow 0^+$ towards the logarithmic potential in Lemma \ref{lem_FTlog}. We fix the choice $\Omega(\theta)=1+0.5\cos^2\theta$ and take $W_{(s)}$ and $W_{\log}$ as in \eqref{Ws} and \eqref{Wlog} respectively. It is easily verified that $W_{(s)},\,0<s<1$ and its asymptotic limit $W_{\log}$ all have the LIC property, and the unique energy minimizer is some $\rho_{a,b}$ (for $W_{(s)}$) or $\frac{1}{|\cB(0;a,b)|}\chi_{\cB(0;a,b,\eta)}$ (for $W_{\log}$). See Figure \ref{fig:example5} for the numerical results. Also, as $s\rightarrow 0^+$, we observe that the minimizer for $W_{(s)}$ converges to that for $W_{\log}$, which can be viewed as a consequence of the convergence of \eqref{abR2_1} to \eqref{lem_ablog_1}.

\begin{figure}
    \centering
    \includegraphics[width=\textwidth]{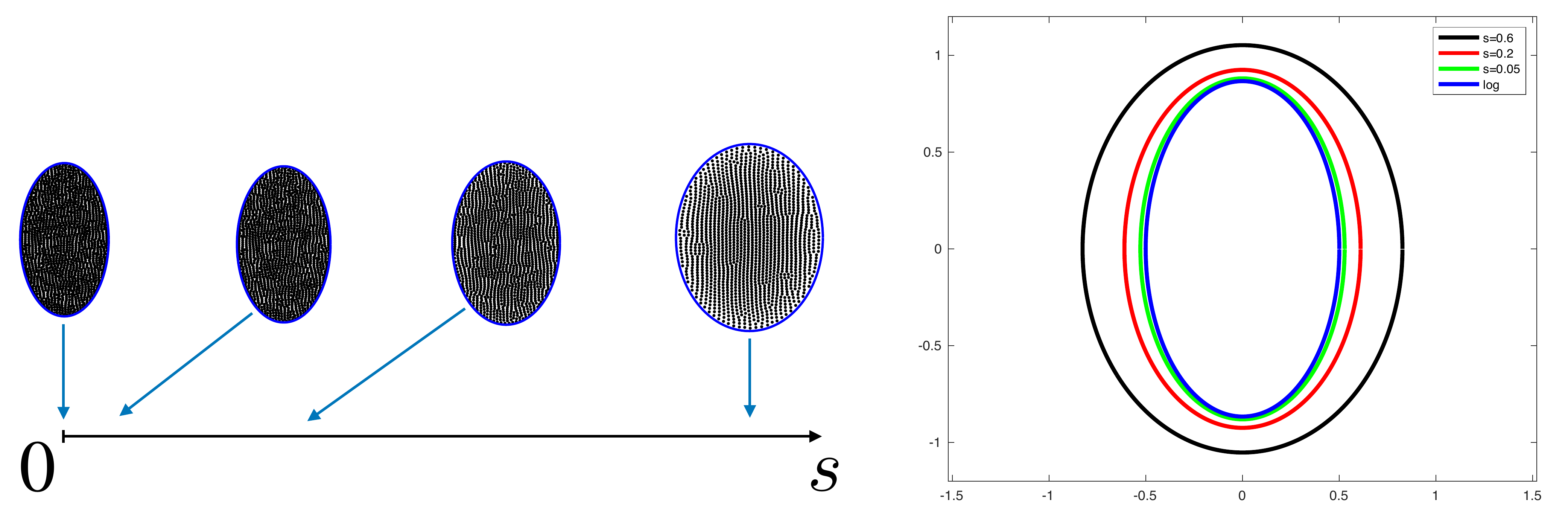}
    \caption{Numerical results for the energy minimizers, for $W_{(s)}$ with $\Omega(\theta)=1+0.5\cos^2\theta$ and various $s$. The $s$ values are $0$ (logarithmic), $0.05,0.2,0.6$. On the right, the supports of the minimizers for different $s$ are compared.}
    \label{fig:example5}
\end{figure}

%%%%%%%%%%%%%%%%%%%%%%%%
\newpage
\appendix

\section{List of notations and integral formulas}
\label{app:constants}

Unit vectors in $\mathbb{R}^2$ are denoted by
\begin{equation}\label{vece}
    \vec{e}_\varphi = (\cos\varphi,\sin\varphi)^\top,\quad \vec{e}_\varphi^\perp = (-\sin\varphi,\cos\varphi)^\top.
\end{equation}
For $0<s<2$, the constants $C_2,R_2$ in \eqref{rho2} are given by
\begin{equation}\label{R2C2}
    R_2 = \Big(\frac{8\sin\frac{s\pi}{2}}{s^2(2+s)\pi}\Big)^{\frac{1}{-s-2}},\quad C_2 = \frac{4\sin\frac{s\pi}{2}}{s^2\pi^2}.
\end{equation}
For $0<s<1$, the constants $C_1,R_1$ in \eqref{rho1D} are given by
\begin{equation}\label{R1C1}
    R_1 = \Big(\frac{2\cos\frac{s\pi}{2}}{s(s+1)\pi} \beta\Big(\frac{1}{2},\frac{3+s}{2}\Big)\Big)^{\frac{1}{-s-2}},\quad C_1=\frac{2\cos\frac{s\pi}{2}}{s(s+1)\pi}
\end{equation}
where $\beta$ denotes the Beta function. $R_2,C_2,R_1,C_1$ are obtained from \cite{CH} up to rescaling. 

For $0<s<1$, $\rho_1$ in \eqref{rho1D} is the unique energy minimizer for the 1D interaction potential $|x|^{-s}+|x|^2$, and thus $(|\cdot|^{-s}+|\cdot|^2)*\rho_1$ is constant on $\supp\rho_1 = [-R_1,R_1]$. This constant is given by
\begin{equation}\label{calcV1}
    V_1 = 2\int_0^{R_1}(x^{-s}+x^2)\rho_1(x)\rd{x} = R_1^2 \Big(\frac{1}{s}+\frac{1}{4+s}\Big).
\end{equation}

We need several constants in the Fourier expansion of the potentials in Sections \ref{sec_LIC} and \ref{sec_complex}, some details are omitted below for simplicity.
We need first the following integral
\begin{equation}\label{calc1}
    \gamma_s := \int_{-\pi}^{\pi} |\cos\theta|^s\rd{\theta} = \frac{2\sqrt{\pi}\Gamma((s+1)/2)}{\Gamma((s+2)/2)},\quad s>-1,\quad \frac{\gamma_{s+2}}{\gamma_s} = \frac{s+1}{s+2},
\end{equation}
which can be proved using Beta functions,  $\gamma_s$ can be extended to $s\in (-2,-1)$  naturally and takes negative values there.

The Fourier transform of power functions on $\mathbb{R}^2$ is given by
\begin{equation}\label{calc2}
    \cF[|\bx|^{-s}] = c_s |\xi|^{-2+s},\quad 0<s<2,\quad c_s = \pi^{s-1}\frac{\Gamma((2-s)/2)}{\Gamma(s/2)}.
\end{equation}
In the sense of improper integral, we can obtain
\begin{equation}\label{calc3}
    \int_0^\infty r^{s}\cos r   \rd{r} = -\Gamma(1+s)\sin\frac{s\pi}{2},\quad -1<s<0
\end{equation}
and
\begin{equation}\label{calc3s}
    \int_0^\infty r^{s}\sin r   \rd{r} = \Gamma(1+s)\cos\frac{s\pi}{2},\quad -1<s<0.
\end{equation}
These two formulas can be proved by contour integrals whose details are omitted. Finally, we obtain 
\begin{equation}\label{calc4}
    \tau_s = (2\pi)^{-s}\Gamma(s)\cos\frac{s\pi}{2} = \frac{c_{2-s}}{\gamma_{-s}},\quad 0<s<2 .
\end{equation}
The last equality can be proved using the functional relations for the Gamma function (with the understanding that $\gamma_{-1}=\infty$). Notice that $\tau_1=0$, and $\tau_s$ is negative for $1<s<2$ and positive for $0<s<1$.
The final integral relation we need is
\begin{equation}\label{calc5}
\int_{\mathbb{R}} \Big(|x|^{-s}-(1+x^2)^{-s/2}   \Big) \rd{x} = -\frac{\gamma_{s-2}}{2} ,\quad 0<s<1.
\end{equation}
To prove this formula for fixed $0<s<1$, we consider
\begin{equation}
    I(t) = \int_0^\infty \Big(x^{-s}-(1+x^2)^{-s/2}   \Big)(1+x^2)^{-t/2} \rd{x}
\end{equation}
which is well-defined and analytic on $\{\text{Re}\,t > -1\}$. If $t\in (1-s,\infty)$, then one can separate the integrand as
\begin{equation}\begin{split}
    I(t) = & \int_0^\infty x^{-s}(1+x^2)^{-t/2} \rd{x} - \int_0^\infty (1+x^2)^{-(s+t)/2} \rd{x} \\ = & \frac{1}{2}\left(\beta\Big(\frac{-s+1}{2},\frac{s+t-1}{2}\Big)-\beta\Big(\frac{1}{2},\frac{s+t-1}{2}\Big)\right). \\
\end{split}\end{equation}
Since both sides are meromorphic functions of $t$ on $\{\text{Re}\,t > -1\}$ and agrees on $ (1-s,\infty)$, they agree on the whole domain $\{\text{Re}\,t > -1\}$. In particular, evaluating at $t=0$ gives \eqref{calc5}.

\section{Estimate on the support of minimizer}\label{app_exist}

\begin{lemma}\label{lem_R}
Assume $0<s<1$ and $W_\alpha$ given by \eqref{Walpha} with $\omega$ satisfying {\bf (h)}. Then there exists a minimizer of $E_\alpha$ in $\mathcal{P}(\mathbb{R}^2)$. Let $\rho$ be any minimizer of $E_\alpha$ with zero center of mass. Then $\supp\rho\subset \cB(0;R)$ with $R$ depending on $s$ but independent of $\alpha$ and $\omega$.
\end{lemma}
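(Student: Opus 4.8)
The plan is to separate the two assertions: existence is the routine part, obtained by the direct method, while the substance is the uniform support bound, whose point is to produce an upper bound on the minimal energy that is independent of $\alpha$ and $\omega$ by testing against $\rho_{\textnormal{1D}}$.

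\textbf{Existence.} Take a minimizing sequence $(\rho_n)\subset\mathcal{P}(\mathbb{R}^2)$ for $E_\alpha$; since the functional is translation invariant we may assume each $\rho_n$ has zero center of mass. As the repulsive part of $W_\alpha$ is nonnegative, $W_\alpha(\bx)\ge|\bx|^2$, and with the centering
\[
E_\alpha[\rho_n]\ \ge\ \tfrac12\iint|\bx-\by|^2\,\rho_n(\by)\rd{\by}\,\rho_n(\bx)\rd{\bx}\ =\ \int_{\mathbb{R}^2}|\bx|^2\,\rho_n(\bx)\rd{\bx},
\]
so the second moments are uniformly bounded, $(\rho_n)$ is tight, and up to a subsequence $\rho_n\rightharpoonup\rho$ narrowly. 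Writing $W_\alpha$ as an increasing limit of bounded continuous functions (it is lower semicontinuous and bounded below), the map $\mu\mapsto\iint W_\alpha\,\rd\mu\,\rd\mu$ is a supremum of narrowly continuous functionals, hence narrowly lower semicontinuous, so $E_\alpha[\rho]\le\liminf_nE_\alpha[\rho_n]=\inf E_\alpha$ and $\rho$ is a minimizer. Alternatively one may quote the existence theory of \cite{CCP15,SST15}.

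\textbf{Support bound.} Let $\rho$ be any minimizer with zero center of mass and set $\lambda:=\int_{\mathbb{R}^2}(W_\alpha*\rho)(\bx)\,\rho(\bx)\rd{\bx}=2E_\alpha[\rho]$, which is finite since $2E_\alpha[\rho]=2\inf E_\alpha\le 2E_\alpha[\rho_{\textnormal{1D}}]<\infty$ for $0<s<1$. Comparing $\rho$ with $(1-t)\rho+t\nu$ as $t\to0^+$ yields $\int(W_\alpha*\rho)\rd\nu\ge\lambda$ for every probability measure $\nu$ with $E_\alpha[\nu]<\infty$; taking $\nu$ to be $\rho$ restricted to $\{W_\alpha*\rho<\lambda\}$ and renormalized forces $\rho(\{W_\alpha*\rho<\lambda\})=0$, i.e.\ $W_\alpha*\rho=\lambda$ $\rho$-a.e. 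Since $W_\alpha*\rho$ is lower semicontinuous ($W_\alpha$ lsc and bounded below) and equals $\lambda$ on a set of full $\rho$-measure, every $\bx\in\supp\rho$ is a limit of points $\bx_k$ with $(W_\alpha*\rho)(\bx_k)=\lambda$, hence $(W_\alpha*\rho)(\bx)\le\lambda$ on $\supp\rho$. Now $\rho_{\textnormal{1D}}$ is supported on the $x_2$-axis, where the angle equals $\pm\tfrac\pi2$ and $\omega(\pm\tfrac\pi2)=0$, so on differences of points of its support $W_\alpha$ coincides with $|x|^{-s}+|x|^2$; therefore $E_\alpha[\rho_{\textnormal{1D}}]=E_0[\rho_{\textnormal{1D}}]$, a finite constant depending only on $s$ (indeed $2E_0[\rho_{\textnormal{1D}}]=V_1$, see \eqref{calcV1}). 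Consequently $\lambda\le V_1$. Finally, for $\bx\in\supp\rho$, using $W_\alpha\ge|\cdot|^2$ and $\int\by\,\rho(\by)\rd{\by}=0$,
\[
|\bx|^2\ \le\ \int_{\mathbb{R}^2}|\bx-\by|^2\,\rho(\by)\rd{\by}\ \le\ (W_\alpha*\rho)(\bx)\ \le\ \lambda\ \le\ V_1,
\]
the first inequality being $\int|\bx-\by|^2\rho(\by)\rd{\by}=|\bx|^2+\int|\by|^2\rho(\by)\rd{\by}$. Thus $\supp\rho\subset\cB(0;\sqrt{V_1})$, and $R=\sqrt{V_1}$ depends only on $s$.

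\textbf{Main obstacle.} The only delicate ingredient is the Euler--Lagrange information, specifically $W_\alpha*\rho\le\lambda$ on $\supp\rho$; this is classical potential theory, the sole point to verify being that $W_\alpha$ is lower semicontinuous and bounded below so that $W_\alpha*\rho$ is lower semicontinuous. If one prefers to avoid it, a direct competitor argument in the spirit of the proof of Theorem \ref{thm_width} works equally well: if some $\bx_0\in\supp\rho$ had $|\bx_0|$ too large, deleting the $\rho$-mass $m>0$ in a small ball around $\bx_0$ and renormalizing would change the energy by at most $-c\,m\,|\bx_0|^2+C\,m\,E_\alpha[\rho]+o(m)$, the first term coming from the positive quadratic interaction of that mass with the bulk of $\rho$; since $E_\alpha[\rho]\le V_1$ is bounded in terms of $s$ only, this is negative once $|\bx_0|^2$ exceeds a multiple of $V_1$, contradicting minimality.
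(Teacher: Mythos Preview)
Your proof is correct and shares with the paper the two essential ingredients: the Euler--Lagrange inequality $(W_\alpha*\rho)\le 2E_\alpha[\rho]$ on $\supp\rho$, and the $\alpha$- and $\omega$-independent bound $E_\alpha[\rho]\le E_\alpha[\rho_{\textnormal{1D}}]=E_0[\rho_{\textnormal{1D}}]$ coming from $\omega(\tfrac\pi2)=0$.

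The two proofs diverge in how they pass from these to a support bound. The paper follows the general mechanism of \cite{CCP15}: from the Euler--Lagrange identity and the lower bound $W_\alpha(\bx)\ge|\bx|^2$ it deduces that every ball $\cB(\bx_0;r)$ with $\bx_0\in\supp\rho$ and $r=\sqrt{E_0[\rho_{\textnormal{1D}}]}$ contains more than half the mass, whence $\mathrm{diam}(\supp\rho)\le 3r$. You instead exploit the quadratic lower bound pointwise: $|\bx|^2\le\int|\bx-\by|^2\rho(\by)\rd\by\le(W_\alpha*\rho)(\bx)\le 2E_0[\rho_{\textnormal{1D}}]=V_1$, which is shorter and yields the sharper constant $R=\sqrt{V_1}$ in place of $3\sqrt{V_1}$. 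The paper's half-mass argument is the more robust template (it would survive replacing $|\bx|^2$ by any confinement growing to infinity), but for the potential at hand your direct route is cleaner. For existence the paper proceeds in the reverse order---first deriving the support bound for minimizers on $\mathcal{P}(\bar\cB(0;R))$ and then letting $R\to\infty$---whereas your tightness-plus-lower-semicontinuity argument is the standard direct method and equally valid.
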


\begin{proof}
The proof mainly follows \cite{CCP15}. The existence of minimizer would follow from the support estimate. In fact, one may consider a minimizer $\rho_R$ on $\mathcal{P}(\bar{B}(0;R))$ (which clearly exists by the weak lower semicontinuity of $E_\alpha$), use this estimate for $\rho_R$, and pass to the limit $R\rightarrow\infty$. 

Let $\rho$ be a minimizer of $E_\alpha$ with zero center of mass. We first show the analogy of \cite[Lemma 2.6]{CCP15}: there exist $r>0$, independent of $\alpha$ and $\omega$, such that for any $\bx_0\in\supp\rho$, 
\begin{equation}\label{Bxr}
\int_{\cB(\bx_0;r)} \rho(\bx)\rd{\bx} > \frac{1}{2}.
\end{equation}
We first show \eqref{Bxr} for $\bx_0$ $\rho$-almost everywhere. Lemma \ref{lem_EL} gives that 
\begin{equation}\label{Bxr1}
(W_\alpha * \rho)(\bx_0) = \frac{1}{2}E_\alpha[\rho] 
\end{equation}
for $\bx_0$ $\rho$-almost everywhere. For the RHS, we have the estimate
\begin{equation}\label{E0rho}
    E_\alpha[\rho] \le E_\alpha[\rho_{\textnormal{1D}}] = E_0[\rho_{\textnormal{1D}}]
\end{equation}
by the minimizing property of $\rho$. On the other hand, if \eqref{Bxr} does not hold, then unit mass implies $\int_{\bx\notin \cB(\bx_0;r)}\rho(\bx)\rd{\bx} \ge 1/2$, and thus
\begin{equation}\begin{split}
(W_\alpha * \rho)(\bx_0) = & \int_{\bx\in \cB(\bx_0;r)} W_\alpha(\bx_0-\bx)\rho(\bx)\rd{\bx} + \int_{\bx\notin \cB(\bx_0;r)} W_\alpha(\bx_0-\bx)\rho(\bx)\rd{\bx} 
\ge  \frac{r^2}{2}
\end{split}\end{equation}
since $W_\alpha > 0$ and $W_\alpha(\bx) > r^2/2$ if $|\bx| \ge r$. This contradicts \eqref{Bxr1} if we choose $r = \sqrt{E_0[\rho_{\textnormal{1D}}]}$. Notice that the last quantity is independent of $\alpha$ and $\omega$. Extension to all $\bx_0\in\supp\rho$ can be done similarly as in \cite{CCP15}, by finding a point near any given $\bx_0\in \supp\rho$ to apply the previous estimate.

Then we conclude that $\sup_{\bx,\by\in\supp\rho}|\bx-\by| \le 3r$ because otherwise we can find $\bx_0$ and $\bx_1$ in $\supp\rho$ with $\cB(\bx_0;r)\cap \cB(\bx_1;r)=\emptyset$, contradicting \eqref{Bxr}. This implies $\supp\rho\subset \cB(0;3r)$ because $\rho$ has zero center of mass.

\end{proof}

\begin{remark}
For the case $1\le s < 2$, one can show the existence of compactly supported minimizers in the same way, but we do not expect to have a uniform-in-$\alpha$ estimate on the size of support because the $E_\alpha$ energy of any measure supported on a vertical line is infinity (and thus we do not have the uniform-in-$\alpha$ estimate \eqref{E0rho}).
\end{remark}

\section*{Acknowledgements}
JAC and RS were supported by the Advanced Grant Nonlocal-CPD (Nonlocal PDEs for Complex Particle Dynamics: Phase Transitions, Patterns and Synchronization) of the European Research Council Executive Agency (ERC) under the European Union's Horizon 2020 research and innovation programme (grant agreement No. 883363). JAC was also partially supported by the EPSRC grant number EP/T022132/1 and EP/V051121/1.

% Bibliography
\bibliographystyle{abbrv}
\bibliography{biblio}

\end{document}